\begin{document}
\textheight= 195 mm
\textwidth = 125 mm

\newcommand{\V}{{\mathcal V}}      
\renewcommand{\O}{{\mathcal O}}
\newcommand{\LL}{\mathcal L}
\newcommand{\Ext}{\hbox{\rm Ext}}
\newcommand{\Tor}{\hbox{\rm Tor}}
\newcommand{\Hom}{\hbox{Hom}}
\newcommand{\Proj}{\hbox{Proj}}
\newcommand{\GrMod}{\hbox{GrMod}}
\newcommand{\grmod}{\hbox{gr-mod}}
\newcommand{\Tors}{\hbox{Tors}}
\newcommand{\gr}{\hbox{gr}}
\newcommand{\tors}{\hbox{tors}}
\newcommand{\rank}{\hbox{rank}}
\newcommand{\End}{\hbox{{\rm End}}}
\newcommand{\Der}{\hbox{Der}}
\newcommand{\GKdim}{\hbox{GKdim}}
\newcommand{\gldim}{\hbox{gldim}}
\newcommand{\im}{\hbox{im}}
\renewcommand{\ker}{\hbox{ker}}
\def\bee{\begin{eqnarray}}
\def\eee{\end{eqnarray}}
\newcommand{\coker}{{\rm coker}}

\newcommand{\lonto}{{\protect \longrightarrow\!\!\!\!\!\!\!\!\longrightarrow}}

\renewcommand{\c}{\cancel}
\newcommand{\h}{\frak h}
\newcommand{\fp}{\frak p}
\newcommand{\m}{{\mu}}
\newcommand{\gl}{{\frak g}{\frak l}}
\newcommand{\ssl}{{\frak s}{\frak l}}
\newcommand{\tw}{{\rm tw}}

\newcommand{\ds}{\displaystyle}
\newcommand{\s}{\sigma}
\renewcommand{\l}{\lambda}
\renewcommand{\a}{\alpha}
\renewcommand{\b}{\beta}
\newcommand{\G}{\Gamma}
\newcommand{\g}{\gamma}
\newcommand{\z}{\zeta}
\newcommand{\e}{\epsilon}
\renewcommand{\d}{\delta}
\newcommand{\p}{\rho}
\renewcommand{\t}{\tau}
\newcommand{\n}{\nu}
\newcommand{\x}{\chi}
\newcommand{\w}{\omega}

\newcommand{\A}{{\Bbb A}}
\newcommand{\C}{{\Bbb C}}
\newcommand{\N}{{\Bbb N}}
\newcommand{\Z}{{\Bbb Z}}
\newcommand{\ZZ}{{\Bbb Z}}
\newcommand{\Q}{{\Bbb Q}}
\renewcommand{\k}{\mathbb K}

\newcommand{\E}{{\mathcal E}}
\newcommand{\K}{{\mathcal K}}
\renewcommand{\S}{{\mathcal S}}
\newcommand{\T}{{\mathcal T}}

\newcommand{\GL}{{GL}}

\newcommand{\rowxy}{(x\ y)}
\newcommand{\colxy}{ \left({\begin{array}{c} x \\ y \end{array}}\right)}
\newcommand{\scolxy}{\left({\begin{smallmatrix} x \\ y
\end{smallmatrix}}\right)}

\renewcommand{\P}{{\Bbb P}}

\newcommand{\la}{\langle}
\newcommand{\ra}{\rangle}
\newcommand{\tensor}{\otimes}
\newcommand{\tsr}{\tensor}
\newcommand{\ol}{\overline}

\newtheorem{thm}{Theorem}[section]
\newtheorem{lemma}[thm]{Lemma}
\newtheorem{cor}[thm]{Corollary}
\newtheorem{prop}[thm]{Proposition}

\theoremstyle{definition}
\newtheorem{defn}[thm]{Definition}
\newtheorem{notn}[thm]{Notation}
\newtheorem{ex}[thm]{Example}
\newtheorem{rmk}[thm]{Remark}
\newtheorem{rmks}[thm]{Remarks}
\newtheorem{note}[thm]{Note}
\newtheorem{example}[thm]{Example}
\newtheorem{problem}[thm]{Problem}
\newtheorem{ques}[thm]{Question}
\newtheorem{thingy}[thm]{}

\newcommand{\onto}{{\protect \rightarrow\!\!\!\!\!\rightarrow}}
\newcommand{\donto}{\put(0,-2){$|$}\put(-1.3,-12){$\downarrow$}{\put(-1.3,-14.5) 

{$\downarrow$}}}

\newcounter{letter}
\renewcommand{\theletter}{\rom{(}\alph{letter}\rom{)}}

\newenvironment{lcase}{\begin{list}{~~~~\theletter} {\usecounter{letter}
\setlength{\labelwidth4ex}{\leftmargin6ex}}}{\end{list}}

\newcounter{rnum}
\renewcommand{\thernum}{\rom{(}\roman{rnum}\rom{)}}

\newenvironment{lnum}{\begin{list}{~~~~\thernum}{\usecounter{rnum}
\setlength{\labelwidth4ex}{\leftmargin6ex}}}{\end{list}}

\thispagestyle{empty}

\title[Classification of certain graded twisted tensor products]{Classification, koszulity and Artin-Schelter regularity of certain graded twisted tensor products}

\keywords{Koszul algebras, quadratic algebras, twisted tensor products, Artin-Schelter regular algebras}

\author[  Conner, Goetz ]{ }

  \subjclass[2010]{16S37, 16S38}
\maketitle

\begin{center}

\vskip-.2in Andrew Conner \\
\bigskip

Department of Mathematics and Computer Science\\
Saint Mary's College of California\\
Moraga, CA 94575\\
\bigskip

 Peter Goetz \\
\bigskip

Department of Mathematics\\ Humboldt State University\\
Arcata, California  95521
\\ \ \\

\end{center}

\setcounter{page}{1}

\thispagestyle{empty}

\vspace{0.2in}

\begin{abstract}

Let $\k$ be an algebraically closed field. We classify all of the quadratic twisted tensor products $A \tsr_{\t} B$ in the cases where $(A, B) = (\k[x], \k[y])$ and $(A, B) = (\k[x, y], \k[z])$. We determine when a quadratic twisted tensor product of this form is Koszul, and when it is Artin-Schelter regular.
\end{abstract}

\bigskip

\section{Introduction}

Let $\k$ be an algebraically closed field, and let $A$ and $B$ be associative $\k$-algebras. In \cite{Cap}, C\v{a}p, Schichl, and Van\v{z}ura introduced a very general notion of product for $A$ and $B$ called a \emph{twisted tensor product}, denoted $A\tsr_{\t} B$. (For precise definitions of terminology we refer the reader to Section 2.) Efforts to understand how ring-theoretic and homological properties behave with respect to this product have been the source of several recent papers (see, for example, \cite{C-G}, \cite{GKM}, \cite{JPS}, \cite{SZL}, \cite{ShepWi2}). In the article \cite{C-G}, we initiated a detailed study of the Koszul property for graded twisted tensor products. The current paper grew out of our efforts to address two problems left unresolved in \cite{C-G}. 

\begin{problem}\label{quad example}
If $A$ and $B$ are Koszul algebras and $A\tsr_{\t} B$ is quadratic, must $A\tsr_{\t} B$ be a Koszul algebra?
\end{problem}

\begin{problem}\label{classification}
Classify quadratic twisted tensor products of $\k[x]$ and $\k[y]$ up to isomorphism (of twisted tensor products).
\end{problem}

Our solutions to these problems are related. By \cite[Proposition 5.5]{C-G}, all quadratic twisted tensor products of $\k[x]$ and $\k[y]$ are Koszul. By \cite[Theorem 5.3]{C-G}, the same is true if $\k[x]$ or $\k[y]$ is replaced by $\k[x]/\la x^2\ra$ or $\k[x]/\la y^2\ra$. Hence a negative answer to Problem \ref{quad example} would require $A\tsr_{\t} B$ to have at least three algebra generators. This requirement is the impetus for our study, and subsequent classification, of quadratic twisted tensor products of $\k[x,y]$ and $\k[z]$. A significant part of this classification reduces to Problem \ref{classification}.

In \cite[Section 6]{C-G}, we partially settled Problem \ref{classification}, but we were unable to handle one case. Our partial result described possible isomorphism types of $\k[x]\tsr_{\t}\k[y]$ in terms of a one-parameter family of algebras. The parameter could take on any value, except the zeros of an interesting family of polynomials related to the Catalan numbers. The appearance of these polynomials motivated us to completely resolve Problem \ref{classification}, which we do in Section 3. Our first result is the following.

\begin{thm}[Proposition \ref{oneSidedTwoGen} and Theorem \ref{quadratic ttps of polynomial rings of one variable}]
\label{introClassification2D}
Every quadratic twisted tensor product of $\k[x]$ and $\k[y]$ is isomorphic to either 
$C(a,b,1)=\k \la x, y \ra/\la yx-ax^2-bxy-y^2 \ra$ or $C(a,b,0)=\k \la x, y \ra/\la yx-ax^2-bxy \ra$ for some $a,b\in \k$. Moreover, 
\begin{itemize}
\item[(1)] $C(a,b,0)$ is a graded twisted tensor product of $\k[x]$ and $\k[y]$ for any $a,b\in\k$;
\item[(2)] $C(a,b,1)$ is a graded twisted tensor product of $\k[x]$ and $\k[y]$ if and only if $a, b \in \k$ satisfy $f_n(a,b) \ne 0$ for all $n \geq 0$, where the $f_n(t,u)$ are a family of polynomials generalizing the family described in \cite[Section 6]{C-G}.
\end{itemize}
\end{thm}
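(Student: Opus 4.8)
The plan is to approach this in two stages: first normalize the defining relation of a quadratic twisted tensor product $A \tsr_\t B$ with $A = \k[x]$, $B = \k[y]$, then analyze when a given normalized algebra actually carries a twisted tensor product structure.

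Let me think about what a graded twisted tensor product of $\k[x]$ and $\k[y]$ looks like. Both are polynomial rings in one variable, graded with $\deg x = \deg y = 1$. A twisted tensor product is determined by a twisting map $\t : B \otimes A \to A \otimes B$ satisfying certain compatibility conditions. In the graded setting with $A, B$ each generated in degree 1, the twisting map is determined by where it sends $y \otimes x$, which must land in $(A \otimes B)_2 = \operatorname{span}\{x^2 \otimes 1, x \otimes y, 1 \otimes y^2\}$. So $\t(y \otimes x) = a x^2 \otimes 1 + b x \otimes y + c \, 1 \otimes y^2$ for scalars $a, b, c$. The resulting algebra $A \tsr_\t B$ has underlying vector space $A \otimes B$ with multiplication twisted by $\t$; as an algebra it is $\k\la x, y\ra / \la yx - ax^2 - bxy - cy^2\ra$. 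So the defining relation is $yx = ax^2 + bxy + cy^2$, and by rescaling $y$ (which is an isomorphism of twisted tensor products since it respects the tensor decomposition) we can assume $c \in \{0, 1\}$. This gives the two families $C(a,b,1)$ and $C(a,b,0)$, matching the statement. The "Proposition \ref{oneSidedTwoGen}" part presumably handles the subtlety about whether the most general quadratic twisted tensor product could have more relations or a relation not of this shape — but since $A \tsr_\t B$ has Hilbert series $(1-t)^{-2}$ as a vector space, it is quadratic with exactly one relation, forcing the above form.

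Now for parts (1) and (2): we need to determine for which $(a,b,c)$ the linear map $\t$ defined on generators by $\t(y \otimes x) = ax^2 \otimes 1 + bx\otimes y + c\, 1\otimes y^2$ extends to a genuine twisting map $B \otimes A \to A \otimes B$. The twisting map must satisfy $\t(\mu_B \otimes A) = (A \otimes \mu_B)(\t \otimes B)(B \otimes \t)$ and the symmetric condition with $\mu_A$, plus unitality. Unitality is automatic from the degree-1 prescription. The two hexagon/compatibility conditions amount to requiring that $\t$ is well-defined on all of $B \otimes A = \k[y] \otimes \k[x]$, i.e., that the formulas for $\t(y^m \otimes x^n)$ obtained by the two possible orders of "pushing $x$'s left past $y$'s" agree. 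Concretely, iterating the relation $yx = ax^2 + bxy + cy^2$ lets us rewrite any $y^m x^n$ as a normal-form element $\sum_{i+j = m+n} \lambda_{ij} x^i y^j$; consistency (associativity / the diamond lemma) is what fails exactly when certain coefficients vanish. The key computation is to track the coefficient that appears when reducing $y x^n$ (equivalently $y^m x$) — this produces a recursively defined sequence of polynomials in $a, b$ (the $f_n$), generalizing the Catalan-related family from \cite[Section 6]{C-G}. When $c = 0$ the relation $yx = ax^2 + bxy$ is already "triangular" — pushing $x$ left never produces pure $y$ powers, so the reduction system is confluent unconditionally and $C(a,b,0)$ is always a twisted tensor product. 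When $c = 1$, the $y^2$ term feeds back and one gets genuine obstructions: $C(a,b,1)$ is a twisted tensor product iff all the $f_n(a,b) \ne 0$. I would set up the recursion for $f_n$ explicitly by computing $\t(y \otimes x^n)$ via the cohexagon condition, show $f_0 = f_1 = 1$ (or whatever the base cases are), derive the recurrence, and then verify that non-vanishing of all $f_n$ is both necessary (some $f_n = 0$ makes the twisting map fail to be injective/well-defined, so $A \tsr_\t B$ does not have the correct Hilbert series) and sufficient (non-vanishing lets one solve the recursion at every stage, producing a bona fide $\t$).

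The main obstacle, I expect, is the bookkeeping in the compatibility conditions: one must show that the single scalar prescription on $y \otimes x$ forces a unique candidate $\t$ on all of $B \otimes A$, identify precisely which coefficients must be invertible for this candidate to satisfy the module/coassociativity axioms, and package those coefficients into the clean polynomial family $f_n(t,u)$. Establishing the recurrence for $f_n$ and proving the "if and only if" — in particular that a single vanishing $f_n$ genuinely obstructs the twisted tensor product structure (as opposed to merely the naive reduction) — is the technical heart. The $c = 0$ case and the normalization in stage one should be comparatively routine.
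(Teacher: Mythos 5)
Your proposal follows essentially the same route as the paper: normalize the single quadratic relation to $yx=ax^2+bxy+cy^2$ with $c\in\{0,1\}$, observe that the $c=0$ (one-sided) case always works, and for $c=1$ identify recursively defined obstruction polynomials $f_n(a,b)$ whose simultaneous non-vanishing is equivalent to the normal-form monomials $x^iy^j$ spanning --- and hence, via the Hilbert-series dichotomy for quadratic algebras on two generators with one relation, forming a basis of --- the algebra. The technical heart you correctly flag as deferred (the recurrence $e_n=ue_{n-1}+f_{n-1}$, $f_n=-te_{n-1}+f_{n-1}$, and the verification that a minimal vanishing $f_n$ forces a genuine linear dependence because $e_n\neq 0$) is exactly what the paper's Lemma \ref{key relation} supplies.
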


In Proposition \ref{two dim as ttp}, we classify the algebras $C(a,b,0)$ and $C(a,b,1)$ up to isomorphism of graded twisted tensor products. We classify the algebras $C(a,b,1)$ and $C(a,b,0)$ up to isomorphism of graded algebras in Theorem \ref{two dim up to isom}. 

Section 4 is concerned with the classification of quadratic twisted tensor products of $\k[x,y]$ and $\k[z]$. The analysis is considerably more complicated than that of Section 3. We describe twisted tensor products $\k[x,y]\tsr_{\t} \k[z]$ in terms of a \emph{graded twisting map} $\t:\k[z]\tsr \k[x,y]\to \k[x,y]\tsr \k[z]$. By \cite[Theorem 1.2]{C-G}, a quadratic twisted tensor product of $\k[x,y]$ and $\k[z]$ is determined up to isomorphism by the values of $\t(z\tsr x)$ and $\t(z\tsr y)$. Suppressing the tensors, we write
\begin{align*}
\t(zx) &= ax^2+bxy+cy^2+dxz+eyz+fz^2,\tag{\textdagger} \label{tauDef} \\
\t(zy) &= Ax^2+Bxy+Cy^2+Dxz+Eyz+Fz^2,
\end{align*}
where the coefficients are elements of $\k$. Our main results in Section 4 are summarized in the following theorem.

\begin{thm}[Lemma \ref{JNFs}, Lemma \ref{cases}]
\label{introClassification3D}
A quadratic twisted tensor product of $\k[x,y]$ and $\k[z]$ is determined, up to isomorphism of twisted tensor products, by a twisting map $\t$ of the form {\rm (\ref{tauDef})}
where $D=F=0$ and $e,f,A\in\{0,1\}$. Moreover, $\t$ belongs to of one of the following types:
\begin{enumerate}
\item (Ore type) $f=0$,
\item (Reducible type) $f=1$, $A=0$, 
\item (Elliptic type) $f=1$, $A=1$, $d=-1$.
\end{enumerate}
\end{thm}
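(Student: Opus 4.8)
The plan is to reduce the general twisting map of the form (\ref{tauDef}) to the stated normal form in two stages: first, use the algebraic constraints that force $\t$ to be a genuine (graded) twisting map, and second, use the remaining freedom to change bases in $\k[x,y]$ and rescale $z$ to normalize the surviving parameters. For the first stage, I would impose the twisting-map axioms from Section 2 on $\t$ restricted to degree-two elements. Writing out $\t$ applied to $zx^2$, $zxy$, $zy^2$ (equivalently, checking compatibility of $\t$ with the relations $x\cdot x$, $x\cdot y = y\cdot x$ of $\k[x,y]$ and $z\cdot z$ of $\k[z]$) yields a system of polynomial equations in the twelve coefficients $a,\dots,f,A,\dots,F$. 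The key observation is that the coefficients $D$ and $F$ of $xz$ and $z^2$ in $\t(zy)$ are forced to vanish: intuitively, $y$ is the "last" generator of $\k[x,y]$ in the chosen ordering, and the associativity constraint coming from $yx=xy$ propagates the $z$-degree in a way that kills these terms. Once $D=F=0$, the remaining compatibility equations become manageable and one reads off relations among the other coefficients.

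For the second stage, I would exploit the group of allowable changes of variables. By \cite[Theorem 1.2]{C-G}, an isomorphism of twisted tensor products $\k[x,y]\tsr_\t\k[z]\to\k[x,y]\tsr_{\t'}\k[z]$ corresponds to a pair $(\varphi,\psi)$ where $\varphi\in GL_2$ acts on $\mathrm{span}\{x,y\}$ and $\psi$ rescales $z$ (possibly with a shift, though the graded condition restricts this). The span of $\{x,y\}$ inside $\k[x,y]_1$ is preserved, so $GL_2(\k)\times\k^\times$ acts on the coefficient data, and I would track how (\ref{tauDef}) transforms under this action. Using the $GL_2$-action on $x,y$ together with the scaling of $z$, one first normalizes $A\in\{0,1\}$ (whether the $x^2$-coefficient of $\t(zy)$ can be scaled away or not), and then, within each case, uses the residual stabilizer to put $e$ and $f$ into $\{0,1\}$. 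This is where the trichotomy emerges: $f=0$ is the Ore case (the twisting map is "triangular" in $z$, so $\k[x,y]\tsr_\t\k[z]$ is an Ore extension); $f=1,A=0$ is reducible; and $f=1,A=1$ forces, after further normalization, $d=-1$, the elliptic case — the name suggesting these are the AS-regular algebras whose point scheme is an elliptic curve.

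The main obstacle I expect is the bookkeeping in the second stage: the $GL_2(\k)\times\k^\times$-action on a twelve-parameter space cut out by the compatibility equations is genuinely intricate, and organizing the orbit analysis so that the three types are exhaustive and the normalizations $D=F=0$, $e,f,A\in\{0,1\}$, $d=-1$ are simultaneously achievable (rather than competing for the same group elements) requires care. Concretely, the hard part is verifying that once $A=1$ is forced (i.e. the $x^2$-coefficient of $\t(zy)$ cannot be removed), the compatibility equations from stage one, combined with the residual scaling freedom, pin down $d=-1$ exactly — this is the step where a degenerate sub-case could hide. I would handle this by splitting on whether certain coefficient expressions (e.g. discriminant-type quantities built from $a,b,c,d,e$) vanish, and checking each branch falls into one of the three named types, possibly after absorbing it into the Ore or reducible case. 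The references to Lemma \ref{JNFs} and Lemma \ref{cases} suggest the authors indeed first reduce the linear part $(d,e;A,B)$ to Jordan normal form (hence "JNFs"), which is exactly the device that makes the case analysis finite and tractable.
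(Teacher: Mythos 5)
There are two genuine gaps here. First, you assert that the twisting-map axioms \emph{force} $D=F=0$ (``the associativity constraint coming from $yx=xy$ \dots kills these terms''). This is false: twisting maps with $F\neq 0$ certainly exist (swap the roles of $x$ and $y$ in any elliptic-type example), and in the paper $D=F=0$ is achieved purely by normalization --- a rescaling $z\mapsto F^{-1}z$ together with the swap $x\leftrightarrow y$ and a shear $y\mapsto y+F'x$ to kill the $z^2$-coefficient of $\t(zy)$, and then a choice of $P\in GL_2(\k)$ putting $\begin{bmatrix} d & e\\ D & E\end{bmatrix}$ into Jordan normal form to kill $D$ (this is Lemma \ref{JNFs}). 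Your second stage has the right toolkit, but you have assigned this reduction to the wrong stage, and the order matters: $f$ and $F$ must be normalized before the Jordan form is imposed, since the latter spends the $GL_2$ freedom, after which only rescalings of $y$ remain to put $A$ (and possibly $C$) into $\{0,1\}$.

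Second, and more seriously, $d=-1$ in the elliptic case cannot be obtained ``combined with the residual scaling freedom'': once the Jordan form is fixed, $d$ is an eigenvalue of the linear part of $\t$, invariant under rescaling $z$ (which rescales $f$ and $a$, not $d$) and under the residual torus in $GL_2$. In the paper $d=-1$ is \emph{forced} by a dimension count in degree $3$: the two Gr\"obner overlaps $z^3$ and $z^2y$ produce elements $G_1,G_2$ of the free algebra, and $\dim T_3=10$ (which any twisted tensor product must satisfy, since $H_T=(1-t)^{-3}$) holds if and only if $\dim\mathrm{Span}_\k\{G_1,G_2\}=1$. The dichotomy $G_2=0$ versus $G_2\neq 0$ is exactly reducible versus elliptic, and in the latter case $G_1=\l G_2$ forces $1+d=0$ because $G_1$ carries a $zxz$ term with coefficient $1+d$ while $G_2$ has no $zxz$ term (Lemma \ref{cases}); the further conclusions $e=0$ and $A=1$ then follow from Proposition \ref{not a zero of fn} applied to $T/\la y\ra\cong\k\la x,z\ra/\la zx-ax^2-dxz-z^2\ra$. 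Your ``stage one'' compatibility equations are the right place to look, but without the key observation that the two overlap elements must be \emph{proportional} (not merely that each compatibility condition holds separately), neither the trichotomy nor the value $d=-1$ emerges, and the degenerate sub-case you worry about is precisely the one the proportionality argument eliminates.
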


Not all combinations of the parameters produce twisted tensor products. For detailed descriptions of parameter restrictions in each case, see Theorems \ref{Ore extension classification}, \ref{ttps with G_2 = 0}, and \ref{characterization of elliptic type ttps} respectively. The reason for the ``reducible'' and ``elliptic'' terminology is due to the fact that, generically, a reducible-type algebra has a reducible point scheme and an elliptic-type algebra has an elliptic curve as its point scheme. (Here ``point scheme'' refers to the scheme that represents the functor of point modules, as in \cite{ATVI}.) We intend to study the non-commutative algebraic geometry of these algebras, and of twisted tensor products in general, in a forthcoming paper.

In \cite[Example 5.4]{C-G} we presented an example of Koszul algebras $A$ and $B$ such that $A\tsr_{\t} B$ is not Koszul. However, Koszul algebras are necessarily quadratic algebras, and, in the example we provided, the algebra $A\tsr_{\t} B$ is not quadratic. This motivated Problem \ref{quad example}. In Section 5 we consider the Koszul property for the algebras classified in Section 4 and present a family of examples that provide a negative answer to Problem \ref{quad example}. 

\begin{thm}[Theorem \ref{Koszul property for Ore extensions, A = 0}, Theorem \ref{Koszul property for T(g, d)}]
\label{introKoszul}
Let $T$ be a quadratic twisted tensor product of $\k[x,y]$ and $\k[z]$. If $T$ is of Ore type or reducible type, then $T$ is Koszul.
If $T$ is of elliptic type, then $T$ is Koszul if and only if $c-(a-1)(C+a-1)\neq 0$.
\end{thm}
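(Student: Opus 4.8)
Throughout, recall that a graded twisted tensor product $T$ of $\k[x,y]$ and $\k[z]$ is isomorphic to $\k[x,y]\tsr\k[z]$ as a graded vector space, so $H_T(t)=(1-t)^{-3}$ for all three types. Writing $V=\k x\oplus\k y\oplus\k z$ and $R\subseteq V\tsr V$ for the $3$-dimensional space of defining relations, the identity $\dim_\k T_3=10$ forces $\dim_\k(V\tsr R\cap R\tsr V)=1$, so there is a unique linear syzygy among the three relations and one obtains a linear complex
\[
0\to T(-3)\xrightarrow{\,M\,}T(-2)^{3}\xrightarrow{\,N\,}T(-1)^{3}\xrightarrow{(x\ y\ z)}T\to\k\to 0,
\]
where $N$ is the matrix of relations and $M$ encodes the syzygy. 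This complex has the correct Euler characteristic, since $(1-t)^{-3}(1-t)^{3}=1=H_\k(t)$. The plan is to reduce Koszulity in every case to exactness of this complex: a bounded \emph{linear} free resolution of $\k_T$ is a Koszul resolution, so if the displayed complex is exact then $T$ is Koszul; conversely, Koszulity forces $H_T(t)\,H_{T^!}(-t)=1$, hence $\dim_\k T^!_3=1$.

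For the Ore type ($f=0$) the relations read $zx=\s(x)z+\delta(x)$ and $zy=\s(y)z+\delta(y)$ with $\s$ a degree-preserving endomorphism and $\delta$ a degree-$(+1)$ $\s$-derivation of $\k[x,y]$; the restrictions of Theorem~\ref{Ore extension classification} make $\s$ a graded automorphism and $\t$ a twisting map, so $T=\k[x,y][z;\s,\delta]$ is a graded Ore extension of the Koszul algebra $\k[x,y]$. Since graded Ore extensions of Koszul algebras are Koszul, $T$ is Koszul, i.e.\ the displayed complex is exact. For the reducible type ($f=1$, $A=0$) no Ore-extension description over $\k[x,y]$ is available because of the $z^2$ term, but here there is no parameter degeneration: using the normal form of Theorem~\ref{ttps with G_2 = 0} one checks directly that $M$ is injective and that $\ker N=\im M$, so the complex is again exact and $T$ is Koszul. (Equivalently, $T$ is Artin--Schelter regular of dimension $3$ with reducible cubic point scheme.) In particular $\dim_\k T^!_3=1$ throughout these two types.

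The elliptic type ($f=1$, $A=1$, $d=-1$) is where the dichotomy occurs, and I would first compute $T^!$. One has $T^!=\k\langle x,y,z\rangle/(R^\perp)$ with $\dim_\k R^\perp=6$, and
\[
\dim_\k T^!_3=\dim_\k\bigl(R^\perp\tsr V^*\cap V^*\tsr R^\perp\bigr)-9,
\]
which reduces to the rank of an explicit matrix in the coefficients of $\t$. The key step is the calculation identifying the controlling maximal minor of this matrix with $c-(a-1)(C+a-1)$: the matrix has maximal rank precisely off the locus $c-(a-1)(C+a-1)=0$, giving $\dim_\k T^!_3=1$ when $c-(a-1)(C+a-1)\neq 0$ and $\dim_\k T^!_3\ge 2$ otherwise. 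This settles the ``only if'' direction at once: if $c-(a-1)(C+a-1)=0$ then $\dim_\k T^!_3\ge 2$, so $H_{T^!}(t)\neq(1+t)^3$ and the Koszul identity $H_T(t)\,H_{T^!}(-t)=1$ fails; hence $T$ is not Koszul.

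For the converse, suppose $c-(a-1)(C+a-1)\neq 0$. Then $\dim_\k T^!_3=1$, so it suffices to prove the displayed linear complex is exact, whence $\k_T$ has a linear free resolution and $T$ is Koszul (equivalently, $T$ is Artin--Schelter regular of dimension $3$, realized inside the Artin--Tate--Van den Bergh family whose point scheme is the smooth cubic attached to $\t$). I expect exactness to be the main obstacle: exactness at $T$ and at $T(-1)^3$ is automatic from the Hilbert series, so the content is injectivity of $M$ together with $\ker N=\im M$. The point is that the relevant minors of $M$, and of the product governing $\ker N/\im M$, are controlled by exactly the quantity $c-(a-1)(C+a-1)$; its nonvanishing is what makes the single linear syzygy non-degenerate and forces the middle homology to vanish. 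When the quantity vanishes this syzygy degenerates, the minimal resolution acquires a non-linear term in homological degree $3$ (matching $\dim_\k T^!_3\ge 2$), and Koszulity fails, consistently with the ``only if'' direction.
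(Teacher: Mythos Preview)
Your overall strategy—build the length-$3$ linear complex and prove exactness—is sound, but several steps are either incorrect or not actually carried out.

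First, two factual errors. In the Ore case you assert that the restrictions of Theorem~\ref{Ore extension classification} force $\sigma$ to be a graded \emph{automorphism}; they do not (e.g.\ case (1)(iv) with $d=0$), and indeed Theorem~\ref{AS-regular algebras}(1) shows $\sigma$ can be non-invertible. Your conclusion is still correct since graded Ore extensions of Koszul algebras are Koszul regardless, but the reasoning should be fixed. More seriously, your parenthetical ``equivalently, $T$ is Artin--Schelter regular'' in both the reducible and elliptic discussions is false or circular: reducible-type algebras are \emph{not} all AS-regular (Theorem~\ref{AS-regular algebras}(2) requires $E\neq 0$ and $a+d\neq 0$), and in the elliptic case the paper deduces AS-regularity \emph{from} Koszulity, not the other way around.

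Second, the reducible case is not proved. ``One checks directly that $M$ is injective and $\ker N=\im M$'' is not a proof across a seven-case family with free parameters. The paper takes a genuinely different route here: it filters $T$ so that the associated graded $T^{\circ}$ has relations $zx-ax^2-dxz-z^2$, $zy$, $xy$, recognizes $T^{\circ}$ as an iterated (one-sided) twisted tensor product which is Koszul by earlier results, and then applies the Loday--Vallette deformation criterion \cite[Theorem 4.2.4]{Loday-Vallette} (using $\dim T_3=10$) to conclude $T^{\circ}\to\gr T$ is an isomorphism and $T$ is Koszul. This avoids any case-by-case matrix computation.

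Third, in the elliptic case you correctly identify that the ``if'' direction reduces to exactness of the linear complex, but you explicitly leave this open (``I expect exactness to be the main obstacle''), and the claim that ``the relevant minors are controlled by $c-(a-1)(C+a-1)$'' is not substantiated. The paper's key move, which you are missing, is the change of variables of Lemma~\ref{simple presentation, type A=1}: setting $w=-x+(a-1)y+z$ and shifting $x$ gives the presentation $wy+yw=x^2+gy^2$, $w^2=-hy^2$, $xy=yx$ with $h=c-(a-1)(C+a-1)$. In these coordinates the syzygy row is $d_3^Q=[\,hy\ \ w\ \ -hx\,]$, so when $h\neq 0$ injectivity of $d_3^Q$ is immediate from regularity of $x,y,w$ (Theorem~\ref{characterization of elliptic type ttps}), and exactness at $T(-2)^3$ then follows from the Euler-characteristic argument you outlined. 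When $h=0$ one has $w^2=0$, so $d_3^Q=[\,0\ w\ 0\,]$ is visibly non-injective; the paper then writes down an explicit \emph{non-linear} minimal resolution $(P_\bullet,d^P_\bullet)$, which directly exhibits the failure of Koszulity. Your $T^!$ argument for the ``only if'' direction would work in principle, but the rank computation you allude to is not carried out, and the change of variables makes it unnecessary.
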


The non-Koszul elliptic-type algebras provide examples of non-Koszul quadratic twisted tensor products of Koszul algebras, solving Problem \ref{quad example}. These counterexamples seem interesting in their own right, so we also study the Yoneda algebra of the non-Koszul elliptic-type algebras. In particular, Theorem \ref{Yoneda presentations} gives an explicit finite presentation of these algebras, in terms of generators and relations.

The classification of the algebras appearing in Theorem \ref{introClassification2D} includes all skew polynomial algebras and the Jordan plane $\k\la x,y\ra/\la yx-xy-y^2\ra$. These are the two-dimensional \emph{Artin-Schelter regular} algebras. Having classified quadratic twisted tensor products of $\k[x,y]$ and $\k[z]$ in Section 4, we take up the question of Artin-Schelter regularity in Section 6, where we prove the following result.

\begin{thm}[Theorem \ref{AS-regular algebras}]
\label{introASreg}
Let $T$ denote a quadratic twisted tensor product of $\k[x,y]$ and $\k[z]$.
\begin{itemize}
\item[(1)] If $T$ is an algebra of Ore type, then $T$ is AS-regular if and only if $T\cong \k[x,y][z;\s,\d]$ where $\s \in \End(\k[x,y])$ is invertible. 
\item[(2)] If $T$ is an algebra of reducible type, then $T$ is AS-regular if and only if $E \ne 0$ and $a+d \ne 0$.
\item[(3)] Assume that $\text{char}\, \k \ne 2$. If $T$ is an algebra of elliptic type, then $T$ is AS-regular if and only if $c-(a-1)(C+a-1) \ne 0$.
\end{itemize}

\end{thm}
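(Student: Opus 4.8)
The plan is to treat each of the three types separately, since the defining relations and the relevant Artin-Schelter machinery differ substantially between them. For all three cases the overall strategy is the same: a quadratic twisted tensor product $T$ of $\k[x,y]$ and $\k[z]$ is a connected graded algebra of global dimension at most $3$ with Hilbert series $(1-t)^{-3}$ (since it is a free module of rank $1$ over $\k[x,y]$ in the $\k[z]$-direction, by the general theory of twisted tensor products), so $T$ is AS-regular if and only if it is a \emph{domain} — equivalently, by the results on quadratic algebras with this Hilbert series (cf.\ the techniques of \cite{ATVI}), if and only if $T$ has the ``correct'' minimal free resolution of the trivial module, which for dimension $3$ with three generators and three relations is the Koszul-dual-symmetric resolution $0\to T(-3)\to T(-3)^{\oplus 3}\to T(-3)^{\oplus 3}\cdots$ — wait, more precisely $0\to T(-3)\to T(-2)^{3}\to T(-1)^{3}\to T\to \k\to 0$. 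So the heart of the argument is to decide, in terms of the parameters, exactly when $T$ is a domain (equivalently, when the ``extra'' syzygy giving the length-$3$ resolution exists).

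For the Ore type, where $T=\k[x,y][z;\s,\d]$ is an Ore extension of $\k[x,y]$ by an endomorphism $\s$ and a $\s$-derivation $\d$, I would invoke the standard fact that an Ore extension $R[z;\s,\d]$ of an AS-regular algebra $R$ is again AS-regular precisely when $\s$ is an automorphism (the ``if'' direction is classical — iterated Ore extensions of $\k$ by automorphisms are AS-regular, and more generally Ore extensions preserve AS-regularity when $\s\in\mathrm{Aut}$; the ``only if'' follows because if $\s$ is not injective then $T$ has zero-divisors, and if $\s$ is injective but not surjective one checks via the Hilbert series / growth that the global dimension or the Gorenstein condition fails). So here the work is just to identify, from the classification in Theorem \ref{Ore extension classification}, which parameter choices correspond to $\s$ invertible, and to translate that into the stated condition; part (1) is essentially bookkeeping once the Ore-extension criterion is cited.

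For the reducible and elliptic types, where $T$ is genuinely not an Ore extension, I would compute the beginning of the minimal graded free resolution of $\k$ directly from the three quadratic relations in \eqref{tauDef}. The key computation is to find the relations among the relations: one looks for the unique (up to scalar) cubic syzygy, i.e.\ a triple of linear forms $(\ell_1,\ell_2,\ell_3)$ with $\sum \ell_i r_i = 0$ in the free algebra, where $r_1,r_2,r_3$ are the relations; the resolution has the AS-regular shape exactly when this syzygy exists and, dually, when the transpose relations close up, which forces a nondegeneracy condition on a certain $3\times 3$ (or $2\times 2$ after reduction) matrix of parameters. Carrying this out for reducible type should yield the condition $E\neq 0$ and $a+d\neq 0$ — one expects $E\neq 0$ to be what makes $z$ a non-zero-divisor and $a+d\neq 0$ to be what makes the relevant determinant nonzero — and for elliptic type the Koszulity criterion $c-(a-1)(C+a-1)\neq 0$ from Theorem \ref{introKoszul} should reappear, with the extra hypothesis $\mathrm{char}\,\k\neq 2$ entering when one diagonalizes a quadratic form or solves a linear system with a factor of $2$ in the denominator. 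In the elliptic case I would also use that a Koszul AS-regular algebra of dimension $3$ is exactly a quadratic Koszul algebra whose quadratic dual is Frobenius, so the Koszul criterion and the AS-regular criterion coincide; the content is then to show that when $c-(a-1)(C+a-1)=0$ the algebra $T$ is Koszul-not-regular (it degenerates), which can be seen by exhibiting a zero-divisor or by showing the Hilbert series of the Yoneda algebra is wrong, drawing on the explicit Yoneda presentation in Theorem \ref{Yoneda presentations}.

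The main obstacle will be the elliptic-type case: there the algebra is closest to a genuine three-dimensional AS-regular algebra of type $E$ (Sklyanin-like), the relations are the most complicated, and separating ``Koszul and regular'' from ``Koszul but not regular, or regular but not Koszul'' requires either a careful resolution/Gröbner-basis computation or an appeal to the point-scheme (an elliptic curve) together with a line module analysis in the style of \cite{ATVI}. I expect to need the characteristic hypothesis precisely here, and to lean on the earlier structural results (Theorem \ref{characterization of elliptic type ttps} for the exact parameter constraints, Theorem \ref{introKoszul} for the Koszul dichotomy) to avoid redoing that analysis from scratch. The reducible and Ore cases, by contrast, should be comparatively routine once the right general criterion for Ore extensions and the right syzygy computation are in place.
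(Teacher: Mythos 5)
Your opening reduction contains a false premise that undermines the whole plan: it is \emph{not} true that every quadratic twisted tensor product of $\k[x,y]$ and $\k[z]$ has global dimension at most $3$. The elliptic-type algebras with $c-(a-1)(C+a-1)=0$ are quadratic twisted tensor products with Hilbert series $(1-t)^{-3}$, yet their minimal free resolution of $\k$ is the infinite complex $(P_{\bullet},d^P_{\bullet})$ of Definition \ref{resolutions}, so they have infinite global dimension. Consequently the proposed master criterion ``AS-regular $\iff$ domain $\iff$ the resolution has the shape $0\to T(-3)\to T(-2)^3\to T(-1)^3\to T$'' is unavailable, and in the reducible case it is actually incapable of producing the answer: by Theorem \ref{Koszul property for Ore extensions, A = 0} \emph{every} reducible-type algebra is Koszul with Hilbert series $(1-t)^{-3}$, hence every one of them, including those with $E=0$ or $a+d=0$, has exactly that length-$3$ linear resolution. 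The cubic-syzygy computation you propose therefore cannot see the conditions $E\ne 0$ and $a+d\ne 0$; what distinguishes the regular algebras is the Gorenstein condition (equivalently, in this setting, noetherianity), which your proposal never verifies anywhere. The paper's actual route in the reducible case is: $E=0$ gives the zero-divisor relation $(z-Bx-Cy)y=0$, so $T$ is not a domain and hence not AS-regular; if $E\ne 0$ then $y$ is normal and $T/\la y\ra\cong \k\la x,z\ra/\la z^2-zx+dxz+ax^2\ra$, and when $a+d=0$ this relation factors as $(z-ax)(z-x)$, making the quotient isomorphic to $\k\la x,y\ra/\la xy\ra$, which is not noetherian --- so $T$ is not noetherian and hence not AS-regular (AS-regular algebras of global dimension $3$ are noetherian by \cite{ATVI}); note that ``domain'' alone would not detect this case. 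Sufficiency when $E\ne0$ and $a+d\ne0$ comes from noetherianity of $T$ plus Stephenson--Zhang, not from a syzygy count.

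Your Ore-type argument is essentially the paper's (domains via \cite{ATVI,ATVII} for necessity, \cite{AST} for sufficiency), and that part is fine. In the elliptic case, however, you have the dichotomy backwards: when $c-(a-1)(C+a-1)=0$ the algebra is \emph{not} Koszul (Theorem \ref{Koszul property for T(g, d)}), and the paper's necessity argument is simply that quadratic AS-regular algebras are Koszul; there is no ``Koszul but not regular'' degenerate case to separate out. For sufficiency when $c-(a-1)(C+a-1)\ne 0$, appealing to ``Koszul with Frobenius dual'' is a legitimate alternative in principle, but you would still have to verify the Frobenius property of $T^!$ explicitly --- the paper instead checks the Gorenstein condition directly by dualizing the finite resolution $(Q_{\bullet},d^Q_{\bullet})$ and proving exactness, using left-regularity of $x$ from Theorem \ref{characterization of elliptic type ttps} and a Hilbert-series count. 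As written, the proposal has no step that establishes the Gorenstein condition in either the reducible or the elliptic case, and the reducible-case criterion it does propose would fail to isolate $E\ne0$, $a+d\ne0$.
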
 

Finally, we remark that, in general, the graded twisted tensor product $R\tsr_{\t}S$ defined by a twisting map $\t:S\tsr R\to R\tsr S$ and the graded twisted tensor product $S\tsr_{\t'}R$ defined by a twisting map $\t':R\tsr S\to S\tsr R$ are not the same, and need not be related. However, if $R=\k[x,y]$ and $S=\k[z]$, then $R\tsr_{\t} S\cong (S\tsr_{\t^{\rm op}} R)^{\rm op}$ where $\t^{\rm op} = \z\t \z$ is a graded twisting map with $\z:R\tsr S\to S\tsr R$ given by $\z(r\tsr s)=s\tsr r$. Thus, by considering opposite algebras, our results characterize all graded twisted tensor products of $\k[x,y]$ and $\k[z]$, in either order.

\section{Preliminaries}
\label{preliminaries}

Throughout the paper, let $\k$ denote an algebraically closed field. Tensor products taken with respect to $\k$ are denoted by $\tsr$. We write $\k^*$ for $\k - \{0\}$.

We work extensively in categories where the objects are graded $\k$-vector spaces. If $V$ and $W$ are $\N$-graded $\k$-vector spaces, then $V \tsr W$ is $\N$-graded by the K\"unneth formula $$(V \tsr W)_m = \bigoplus_{k+l = m} V_k \tsr W_l.$$ Whenever we refer to $V\tsr W$ as a graded space, we assume this K\"unneth grading.  

The term {\it graded algebra} refers to a unital, associative $\k$-algebra, $A = \oplus_{n \geq 0} A_n$, that is {\it connected} ($A_0 = \k$) and generated by finitely many homogeneous elements. These assumptions imply the graded algebras we consider are $\N$-graded and {\it locally finite} ($\dim A_n < \infty$ for all $n \geq 0$). Almost all of the graded algebras in this paper are generated in homogeneous degree $1$. If $A$ is a graded algebra, we denote the (graded) multiplication map $A\tsr A\to A$ by $\m_A$. The kernel of the canonical graded algebra homomorphism $A\to \k$ is the graded radical of $A$, denoted $A_+ = \bigoplus_{i > 0} A_i$. 

\subsection{Twisted tensor products and twisting maps}
Let $A$ and $B$ be graded algebras. A {\emph{graded twisted tensor product}} of $A$ and $B$ is a triple $(C, i_A, i_B)$ consisting of a graded algebra $C$ and injective homomorphisms of graded algebras $i_A: A \to C$ and $i_B: B \to C$ such that the graded $\k$-linear map $A \tsr B \to C$ given by $a \tsr b \mapsto i_A(a)i_B(b)$ is an isomorphism of $\k$-vector spaces. 

Suppose that $\T = (C, i_A, i_B)$ and $\T' = (C', i'_A, i'_B)$ are two twisted tensor products of $A$ and $B$. We say that $\T$ and $\T'$ are {\it isomorphic} if there exist graded algebra isomorphisms $\a: A \to A$, $\b: B \to B$ and $\g: C \to C$ such that $\g i_A = i'_A \a$ and $\g i_B = i'_B \b$. Note that this notion of isomorphism is stronger than that of algebra isomorphism. Indeed it is possible to have non-isomorphic triples $\T$ and $\T'$ where $C$ and $C'$ are isomorphic as graded algebras (see Proposition \ref{two dim as ttp} and Theorem \ref{two dim up to isom}).

As shown in \cite{Cap}, the study of twisted tensor products is greatly facilitated by the notion of a twisting map. We adopt the most general graded version for our work below.
By a {\it graded twisting map} we mean a graded $\k$-linear map $\t: B \tsr A \to A \tsr B$ such that $\t(1 \tsr a) = a \tsr 1$ and $\t(b \tsr 1) = 1 \tsr b$ and $$\t(\mu_B \tsr \mu_A) = (\m_A \tsr \m_B)(1 \tsr \t \tsr 1)(\t \tsr \t)(1 \tsr \t \tsr 1).$$  The condition that $\t$ is graded is simply that $\t((A \tsr B)_n) \subseteq (B \tsr A)_n$ for all $n \geq 0$ with respect to the K\"unneth grading. It is common in the literature to see the phrase ``graded twisting map'' refer to the more restrictive condition: $\t(B_i \tsr A_j) \subseteq A_j \tsr B_i$ for all $i, j \geq 0$. Since a graded twisting map $\t$ satisfies $$\t(B_+ \tsr A_+) \subseteq (A_+ \tsr \k) \oplus (A_+ \tsr B_+) \oplus (\k \tsr B_+),$$ we call $\t$ \emph{one-sided} if either $$\t(B_+ \tsr A_+) \subseteq (A_+ \tsr \k) \oplus (A_+ \tsr B_+)$$ or $$\t(B_+ \tsr A_+) \subseteq (A_+ \tsr B_+) \oplus (\k \tsr B_+).$$ 

The relationship between twisting maps and twisted tensor products was established in the ungraded case in \cite{Cap} and in the graded case in \cite{C-G}.

\begin{prop}
\label{twisting maps and ttps}
\cite[Proposition 2.3]{C-G}
Let $A$ and $B$ be graded algebras. Let $\t: B \tsr A \to A \tsr B$ be a graded $\k$-linear map. Define  $\m_{\t}: A \tsr B \tsr A \tsr B \to A \tsr B$ by $\m_{\t} = (\m_A \tsr \m_B)(1 \tsr \t \tsr 1)$. Then $\t: B \tsr A \to A \tsr B$ is a graded twisting map if and only if $\m_{\t}$ defines an associative multiplication giving $A \tsr B$ the structure of a graded algebra.
\end{prop}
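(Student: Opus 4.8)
The plan is to prove Proposition \ref{twisting maps and ttps} by a direct, symmetric verification: show that the map $\m_\t$ is associative \emph{if and only if} $\t$ satisfies the displayed hexagon/cocycle identity, and then observe that the unit and linearity conditions built into the definition of a graded twisting map supply a two-sided identity element $1\tsr 1$. Since the algebra $A\tsr B$ with multiplication $\m_\t$ is manifestly graded with respect to the K\"unneth grading (because $\t$ is graded and $\m_A,\m_B$ are), connectedness and finite generation are inherited from $A$ and $B$, so the only real content is the associativity equivalence.

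\textbf{Key steps.} First I would write out both sides of the associativity equation $\m_\t(\m_\t\tsr 1\tsr 1)=\m_\t(1\tsr 1\tsr\m_\t)$ as maps $A\tsr B\tsr A\tsr B\tsr A\tsr B\to A\tsr B$, substituting the definition $\m_\t=(\m_A\tsr\m_B)(1\tsr\t\tsr 1)$ into each. Using associativity of $\m_A$ and of $\m_B$ and the (co)functoriality of $\tsr$ — i.e. the interchange law $(\phi\tsr\psi)(\phi'\tsr\psi')=(\phi\phi')\tsr(\psi\psi')$ whenever the composites make sense — I would move all occurrences of $\m_A$ to the left and all occurrences of $\m_B$ to the right. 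The terms involving only $\m_A$, $\m_B$, and identity maps will agree on both sides after this normalization; what remains is a comparison of the "middle" operator acting on the three tensor factors $B\tsr A\tsr B\tsr A$, and this comparison is exactly the assertion that
\[
(1\tsr\t\tsr 1)(\t\tsr\t)(1\tsr\t\tsr 1) \;=\; \t(\m_B\tsr\m_A)
\]
after suitable pre- and post-composition with identity maps and multiplications that are injective enough (or, more precisely, after pairing against the universal case). Second, to get the "only if" direction cleanly, I would specialize the associativity identity by feeding in elements of the form $1\tsr b\tsr a\tsr 1\tsr a'\tsr 1$ and $1\tsr b\tsr 1\tsr b'\tsr a\tsr 1$ (using $\t(1\tsr a)=a\tsr 1$ and $\t(b\tsr 1)=1\tsr b$ to collapse the trivial slots), which isolates the hexagon identity on $B\tsr B\tsr A$ and on $B\tsr A\tsr A$, and then note that the full hexagon on $B\tsr A$ follows because $A$ and $B$ are generated in degree one and $\t$ is determined by multiplicativity from its values on generators — alternatively, bilinearity lets one reduce the general identity to these two families of specializations directly. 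Conversely, given the hexagon identity, reversing the normalization of the first step yields associativity.

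\textbf{Main obstacle.} The genuinely delicate point is the bookkeeping in the first step: matching up the many parenthesizations of $1$'s and $\t$'s on a six-fold tensor product without error, and in particular justifying that the "outer" parts (the iterated $\m_A$'s and $\m_B$'s) really do cancel, leaving a clean equivalence with the hexagon condition rather than a weaker statement. This is where one must be careful that the map $(\m_A\tsr\m_B)$ does not destroy information needed to recover $\t$'s identity — here the unit axioms $\t(1\tsr a)=a\tsr 1$, $\t(b\tsr 1)=1\tsr b$ do the work, since they let us embed $B\tsr A$ into the six-fold tensor product as the slice where the outer multiplications act trivially. The rest — gradedness, connectedness, the unit — is routine. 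Since this is \cite[Proposition 2.3]{C-G} and also the graded analogue of the main result of \cite{Cap}, I would in practice cite those sources for the detailed diagram chase and only sketch the reduction above; but if a self-contained argument is wanted, the diagrammatic normalization outlined here is the way to produce it.
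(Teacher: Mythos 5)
The paper offers no proof of this statement---it is imported verbatim from \cite[Proposition 2.3]{C-G}, with the ungraded original in \cite{Cap}---and the argument you outline is precisely the diagram chase those sources carry out, so your approach is the standard one and is correct in outline. Three points where the sketch needs tightening. First, the identity you display omits the post-composition with $\m_A\tsr\m_B$: as written its two sides do not even share a codomain ($A\tsr A\tsr B\tsr B$ versus $A\tsr B$), and the omission is not cosmetic, since associativity of $\m_{\t}$ only ever produces the multiplied composite $(\m_A\tsr\m_B)(1\tsr\t\tsr 1)(\t\tsr\t)(1\tsr\t\tsr 1)$, which is exactly what the twisting-map axiom asserts. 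Second, your two specializations yield the restricted compatibility conditions $\t(\m_B\tsr 1_A)=(1_A\tsr\m_B)(\t\tsr 1_B)(1_B\tsr\t)$ and $\t(1_B\tsr\m_A)=(\m_A\tsr 1_B)(1_A\tsr\t)(\t\tsr 1_A)$; these do combine into the full identity on $B\tsr B\tsr A\tsr A$, but via the purely formal factorization $\t(\m_B\tsr\m_A)=\t(\m_B\tsr 1_A)(1_B\tsr 1_B\tsr\m_A)$ followed by two applications of the restricted conditions---not because $A$ and $B$ are generated in degree one, which the proposition does not assume. Third, for the direction ``$\m_{\t}$ gives a graded algebra structure $\Rightarrow$ $\t$ is twisting,'' the unit axioms $\t(1\tsr a)=a\tsr 1$ and $\t(b\tsr 1)=1\tsr b$ must be extracted from the hypothesis rather than assumed; this is where connectedness enters, since $(A\tsr B)_0=\k(1\tsr 1)$ forces the unit of $(A\tsr B,\m_{\t})$ to be $1\tsr 1$, and evaluating $\m_{\t}$ against $1\tsr 1$ on either side then recovers both axioms. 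None of these is a failing step; with these repairs your normalization argument is the proof given in the cited references.
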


If $\t: B \tsr A \to A \tsr B$ is a graded twisting map, we use the notation $A \tsr_{\t} B$ for the algebra $(A \tsr B, \m_{\t})$. Note that the triple $(A \tsr_{\t} B, i_A, i_B)$, where $i_A: A \to A \tsr B$ and $i_B: B \to A \tsr B$ are the canonical inclusions, is a twisted tensor product of $A$ and $B$. We will abuse notation and also write $A \tsr_{\t} B$ for this triple.

The ungraded version of the following fundamental result first appeared in \cite[Proposition 2.7]{Cap}.

\begin{prop}
\label{ttp yields twisting map}
\cite[Proposition 2.4]{C-G}
Let $(C, i_A, i_B)$ be a graded twisted tensor product of graded algebras $A$ and $B$. Then there exists a unique graded twisting map $\t$ such that $(C, i_A, i_B)$ is isomorphic to $A \tsr_{\t} B$ as graded twisted tensor products of $A$ and $B$.
\end{prop}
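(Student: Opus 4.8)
The plan is to build the twisting map $\t$ explicitly from the ``flipped'' product $b\tsr a\mapsto i_B(b)i_A(a)$ and then invoke Proposition~\ref{twisting maps and ttps} to see it is a twisting map. First I would record that the canonical map $\phi\colon A\tsr B\to C$, $\phi(a\tsr b)=i_A(a)i_B(b)$, is an isomorphism of $\k$-vector spaces by the definition of a graded twisted tensor product, and that it is \emph{graded}: since $i_A,i_B$ are graded algebra homomorphisms we have $i_A(A_k)i_B(B_l)\subseteq C_{k+l}$, so $\phi$ carries $(A\tsr B)_n$ into $C_n$, hence restricts to an isomorphism in each degree. Next, define $\t\colon B\tsr A\to A\tsr B$ on generators by $\t(b\tsr a)=\phi^{-1}\bigl(i_B(b)i_A(a)\bigr)$ and extend $\k$-linearly. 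The same degree count shows $\t$ is graded $\k$-linear, and from $i_A(1)=i_B(1)=1_C$ one reads off $\t(1\tsr a)=a\tsr 1$ and $\t(b\tsr 1)=1\tsr b$.

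The crux is to identify the multiplication of $C$, transported to $A\tsr B$ along $\phi$, with $\m_\t$. Let $\m$ denote the transported multiplication, so $(A\tsr B,\m)$ is a graded algebra and $\phi$ is a graded algebra isomorphism. Writing $\t(b\tsr a')=\sum_i a_i\tsr b_i$, one has $(\m_A\tsr\m_B)(1\tsr\t\tsr 1)(a\tsr b\tsr a'\tsr b')=\sum_i(aa_i)\tsr(b_ib')$, and applying $\phi$ and using that $i_A,i_B$ are algebra maps together with $\phi(\t(b\tsr a'))=i_B(b)i_A(a')$ gives
\begin{align*}
\phi\bigl(\m_\t(a\tsr b\tsr a'\tsr b')\bigr)
&= \phi\bigl((\m_A\tsr\m_B)(1\tsr\t\tsr 1)(a\tsr b\tsr a'\tsr b')\bigr) \\
&= i_A(a)\,\phi\bigl(\t(b\tsr a')\bigr)\,i_B(b')
   = i_A(a)\,i_B(b)\,i_A(a')\,i_B(b') \\
&= \phi(a\tsr b)\,\phi(a'\tsr b').
\end{align*}
Since the right-hand side is $\phi(\m(a\tsr b\tsr a'\tsr b'))$ and $\phi$ is injective, $\m=\m_\t$ by $\k$-bilinearity.

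Now $\m_\t=\m$ is associative and makes $A\tsr B$ a graded algebra, so Proposition~\ref{twisting maps and ttps} shows $\t$ is a graded twisting map and $A\tsr_\t B=(A\tsr B,\m)$. Then $\phi\colon A\tsr_\t B\to C$ is a graded algebra isomorphism with $\phi(a\tsr 1)=i_A(a)$ and $\phi(1\tsr b)=i_B(b)$, so the triple $(\mathrm{id}_A,\mathrm{id}_B,\phi)$ exhibits $A\tsr_\t B\cong(C,i_A,i_B)$ as graded twisted tensor products. For uniqueness, if $\t'$ is a graded twisting map and $(\a,\b,\g)$ realizes $A\tsr_{\t'}B\cong(C,i_A,i_B)$ over the identities on $A$ and $B$, then $\g(a\tsr b)=\g\bigl((a\tsr 1)(1\tsr b)\bigr)=i_A(a)i_B(b)=\phi(a\tsr b)$ forces $\g=\phi$, and then $\phi(\t'(b\tsr a))=\g\bigl((1\tsr b)(a\tsr 1)\bigr)=i_B(b)i_A(a)$ forces $\t'=\t$.

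I expect no deep obstacle here: the argument is essentially definition-chasing, and the one point requiring care is making sure the degree bookkeeping (via the K\"unneth grading) really certifies $\t$ as a \emph{graded} twisting map, together with the short computation showing the transported product equals $\m_\t$.
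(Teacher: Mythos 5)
The paper does not actually prove this proposition here --- it is quoted verbatim from \cite[Proposition 2.4]{C-G} --- and your argument is correct and is precisely the standard one from that source (and from C\v{a}p--Schichl--Van\v{z}ura): define $\t=\phi^{-1}\circ\m_C\circ(i_B\tsr i_A)$, transport the multiplication, and invoke Proposition~\ref{twisting maps and ttps}. The only caveat is that your uniqueness argument (correctly) treats isomorphisms over $\a=\mathrm{id}_A$, $\b=\mathrm{id}_B$; under the paper's broader notion of isomorphism of twisted tensor products, which permits arbitrary automorphisms $\a,\b$, Proposition~\ref{isomorphisms of ttps} shows the twisting map is generally \emph{not} unique, so the literal statement must be read in the restricted sense you adopt.
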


The graded twisted tensor product $A \tsr_{\t} B$ can also be identified with a certain quotient of the free product algebra $A \ast B$. As a $\k$-vector space $$A \ast B = \bigoplus_{i \geq 0; \ \e_1, \e_2 \in \{0, 1\}} A^{\e_1}_+ \tsr (B_+ \tsr A_+)^{\tsr i} \tsr B_+^{\e_2}.$$ The algebra $A \ast B$ is $\N$-graded by the usual K\"unneth grading. Moreover, there are natural inclusions of $A$ and $B$ into $A \ast B$. Define an ideal of $A \ast B$ by $$I_{\t} = \la b \tsr a - \t(a \tsr b) : a \in A, b \in B \ra.$$ By \cite[Proposition 2.5]{C-G}, we have $A \tsr_{\t} B \cong (A \ast B)/I_{\t}$ as graded algebras.

\subsection{Quadratic twisted tensor products}
Let $C$ be a graded algebra that is generated in degree $1$, and let $T(C_1)$ denote the tensor algebra on the vector space $C_1$. The algebra $C$ is called {\it quadratic} if the kernel of the canonical projection $T(C_1) \to C$ is generated in degree 2. Let $\t: B \tsr A \to A \tsr B$ be a graded twisting map. In \cite{C-G}, the authors characterized when the algebra $A \tsr_{\t} B$ is quadratic in terms of the structure of $\t$. We briefly recall the relevant definitions to state this result.

If $V$ and $W$ are graded vector spaces and $f: V \to W$ is a graded linear map, we denote the degree-$n$ component of $f$ by $f_n$ and define $f_{\leq n} = \oplus_{i = 0}^n f_i$ and $f_{>n} = \oplus_{i > n} f_i$.

We say a graded linear map $t: (B \tsr A)_{\leq n} \to (A \tsr B)_{\leq n}$ is {\emph{graded twisting in degree}} $n$ if $t(1 \tsr a) = a \tsr 1$ and $t(b \tsr 1) = 1 \tsr b$ for all $a \in A_n$ and $b \in B_n$, and $$t_n(\m_B \tsr \m_A) = (\m_A \tsr \m_B)(1 \tsr t_{\leq n} \tsr 1)(t_{\leq n} \tsr t_{\leq n})(1 \tsr t_{\leq n} \tsr 1)$$ as maps defined on $(B \tsr B \tsr A \tsr A)_n$. If $t$ is graded twisting in degree $i$ for all $i \leq n$, we say $t$ is {\emph{graded twisting to degree $n$}}.

\begin{defn}\cite[Definition 4.2]{C-G}
\label{UEP defn}
A graded twisting map $\t: B \tsr A \to A \tsr B$ has the {\emph{unique extension property to degree $n$}} if, whenever $\t': B \tsr A \to A \tsr B$ is a graded linear map that is twisting to degree $n$ such that $\t_i = \t'_i$ for all $i < n$, it follows that $\t_n = \t'_n$.

The graded twisting map $\t$ has the {\emph{unique extension property}} if $\t$ has the unique extension property to degree $n$ for all $n \geq 3$.
\end{defn}

We note that every one-sided graded twisting map has the unique extension property (\cite[Proposition 5.2]{C-G}).


\begin{thm}
\label{quadratic iff uep}
Let $A$ and $B$ be quadratic algebras and let $\t: B \tsr A \to A \tsr B$ be a graded twisting map. The following are equivalent:
\begin{enumerate}
\item the graded algebra $A \tsr_{\t} B$ is quadratic,
\item the graded twisting map $\t$ has the unique extension property,
\item the ideal $I_{\t}$ of $A\ast B$ is generated in degree 2.
\end{enumerate} 
\end{thm}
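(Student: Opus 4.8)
The three conditions should be tied together by the free-product description $A \tsr_{\t} B \cong (A \ast B)/I_{\t}$ and the explicit formula for $I_{\t}$ recalled at the end of Section 2. I would prove $(3) \Leftrightarrow (1)$ first and then $(2) \Leftrightarrow (3)$, which I expect to be the heart of the matter.

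For $(1) \Leftrightarrow (3)$: since $A$ and $B$ are quadratic and generated in degree $1$, the algebra $C = A \tsr_{\t} B$ is generated in degree $1$ (by $i_A(A_1)$ and $i_B(B_1)$), so $C_1 = A_1 \oplus B_1$ and there is a canonical surjection $T(C_1) \to C$. The relations of $A$ and the relations of $B$, being quadratic, lift to quadratic elements of $T(C_1)$; the only other relations needed to present $C$ from $T(C_1)$ are the "commutation" relations $b \tsr a - \t(a \tsr b)$ for $a \in A_1$, $b \in B_1$, which are quadratic (here I use that $\t$ is graded, so $\t(a \tsr b) \in (A \tsr B)_2 = (A_2 \tsr \k) \oplus (A_1 \tsr B_1) \oplus (\k \tsr B_2)$, lying in $T(C_1)_2$). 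So $C$ is quadratic if and only if the degree-$2$-generated ideal these produce already contains all relations of $C$ in every degree, and I would argue this is exactly the statement that $I_{\t}$ (which visibly contains the above quadratic elements in degree $2$, together with the relation ideals of $A$ and $B$) is generated in degree $2$. One direction is immediate; for the other I must check that the ideal of $A \ast B$ generated by the degree-$2$ part of $I_{\t}$ equals $I_{\t}$, which amounts to showing $b \tsr a - \t(a \tsr b) \in I_{\t}$ is a consequence of the degree-$2$ relations for all $a \in A_m$, $b \in B_k$ — this is where the twisting-map axiom gets used to "straighten" higher-degree commutations via lower-degree ones.

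For $(2) \Leftrightarrow (3)$: here the bridge is the observation, implicit in Definition \ref{UEP defn} and the notion of "graded twisting to degree $n$", that a graded linear map $\t': B \tsr A \to A \tsr B$ that is twisting to degree $n-1$ and agrees with $\t$ in degrees $< n$ corresponds exactly to a choice of how to straighten degree-$n$ commutation relations modulo the degree-$< n$ relations already imposed — equivalently, to the freedom in $I_{\t}$ in degree $n$ beyond what is forced by its degree-$2$ part. Concretely: $I_{\t}$ is generated in degree $2$ if and only if, for each $n \geq 3$, the span of $\{b \tsr a - \t(a \tsr b) : a \in A_i, b \in B_j, i+j = n\}$ is contained in $(A \ast B)_{> 0}\cdot (I_{\t})_2 + (I_{\t})_2 \cdot (A \ast B)_{> 0}$; and the failure of that containment in the minimal bad degree $n$ is precisely the existence of a competing twisting-to-degree-$n$ map $\t'$ with $\t'_i = \t_i$ for $i < n$ but $\t'_n \neq \t_n$, i.e. the failure of the unique extension property to degree $n$. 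I would make this precise by showing that modifying $\t$ on $(A \tsr B)_n$ by any graded linear map $(A \tsr B)_n \to (A \tsr B)_n$ whose effect is killed by the degree-$<n$ relations yields a new map twisting to degree $n$, and conversely every such map arises this way, so uniqueness of the extension $\Leftrightarrow$ the degree-$n$ relations of $I_{\t}$ are generated by lower-degree ones.

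**Main obstacle.** The bookkeeping in $(2) \Leftrightarrow (3)$ — precisely matching the combinatorial "twisting to degree $n$" identity (the iterated $\m_A \tsr \m_B$ / $\t$ composite on $(B \tsr B \tsr A \tsr A)_n$) against the statement that $b \tsr a - \t(a \tsr b)$ lies in the ideal generated by lower-degree such elements — is the delicate part, since one must carefully track which straightenings are "already forced." I would expect to set up an induction on $n$, assume $I_{\t}$ is generated in degrees $\leq n-1$, and then identify the degree-$n$ obstruction space with the space of degree-$n$ values on which two twisting-to-degree-$n$ extensions can disagree; everything else ($A$, $B$ quadratic feeding into the degree-$2$ generation of their relation ideals, and the graded-ness of $\t$ keeping commutation relations quadratic) is routine.
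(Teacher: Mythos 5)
The paper does not actually prove this theorem from scratch: its entire proof is two citations, namely that $(1)\Leftrightarrow(2)$ is \cite[Theorem 1.2]{C-G} and $(1)\Leftrightarrow(3)$ is \cite[Proposition 2.5]{C-G}. Your plan for $(1)\Leftrightarrow(3)$ is essentially sound and matches what the cited proposition does: pass through $A\tsr_{\t}B\cong(A\ast B)/I_{\t}$, note that the relation ideals of $A$ and $B$ and the degree-$2$ commutation relations are quadratic, and reduce quadraticity of $C$ to degree-$2$ generation of $I_{\t}$. (One small slip: the degree-$n$ part of the ideal generated by $(I_{\t})_2$ is $\sum_{i+j=n-2}(A\ast B)_i(I_{\t})_2(A\ast B)_j$, not just the one-sided products $(A\ast B)_{>0}(I_{\t})_2+(I_{\t})_2(A\ast B)_{>0}$; this is repaired by the induction you mention, but as written the containment you test is too weak.)

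The genuine gap is in $(2)\Leftrightarrow(3)$. The ``dictionary'' you assert --- that the failure of degree-$2$ generation of $I_{\t}$ in a minimal bad degree $n$ is \emph{precisely} the existence of a competing map $\t'$ that is twisting to degree $n$ and agrees with $\t$ below degree $n$ --- is not bookkeeping; it is the statement of \cite[Theorem 1.2]{C-G} itself, and your proposal never establishes either direction. In particular: (i) to go from a degree-$n$ failure of generation to a competing $\t'$, you must produce a linear map on all of $(B\tsr A)_n$ satisfying the full identity $t_n(\m_B\tsr\m_A)=(\m_A\tsr\m_B)(1\tsr t_{\le n}\tsr 1)(t_{\le n}\tsr t_{\le n})(1\tsr t_{\le n}\tsr 1)$ on $(B\tsr B\tsr A\tsr A)_n$, and it is not clear that an arbitrary ``correction killed by the lower-degree relations'' satisfies this compatibility; (ii) to go from a competing $\t'$ to non-generation, you want to conclude that $\t(a\tsr b)-\t'(a\tsr b)\in A\tsr B$ cannot lie in the ideal generated by $(I_{\t})_{\le n-1}$, which requires knowing that this ideal meets the subspace $A\tsr B$ trivially in degree $n$ --- a point where the linear isomorphism $A\tsr B\cong(A\ast B)/I_{\t}$ must be invoked carefully, since $\t'$ need not define an associative product. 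Until these two implications are actually carried out, the argument is a restatement of the theorem rather than a proof of it.
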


\begin{proof}
The equivalence of (1) and (2) is \cite[Theorem 1.2]{C-G}. Statements (1) and (3) are equivalent by \cite[Proposition 2.5]{C-G}.
\end{proof}

When the algebra $A \tsr_{\t} B$ is quadratic we will refer to it as a \emph{quadratic twisted tensor product} of $A$ and $B$. 

The following result is very helpful in classifying twisted tensor products up to isomorphism.

\begin{prop}\cite[Proposition 2.2]{C-G}
\label{isomorphisms of ttps}
Let $A$ and $B$ be algebras, and let $\a: A \to A$ and $\b: B \to B$ be algebra automorphisms. If $\t: B \tsr A \to A \tsr B$ is a twisting map, then the map $\t': B \tsr A \to A \tsr B$ defined by $\t' = (\a \tsr \b) \t (\b^{-1} \tsr \a^{-1})$ is a twisting map. Furthermore, $A \tsr_{\t} B$ and $A \tsr_{\t'} B$ are isomorphic as twisted tensor products of $A$ and $B$.
\end{prop}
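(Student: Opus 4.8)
The plan is to write down the isomorphism directly and to obtain the fact that $\t'$ is a twisting map almost for free. Set $\g=\a\tsr\b\colon A\tsr B\to A\tsr B$. Since $\a$ and $\b$ are bijective and degree-preserving, $\g$ is a graded linear automorphism of $A\tsr B$ fixing $1\tsr1$. Transporting the graded algebra structure of $A\tsr_{\t}B$ along $\g$ yields a graded associative multiplication $\m'=\g\,\m_{\t}\,(\g^{-1}\tsr\g^{-1})$ on $A\tsr B$ with unit $1\tsr1$, and $\g$ is by construction a graded algebra isomorphism $A\tsr_{\t}B\to(A\tsr B,\m')$. The whole proposition then reduces to the single identity $\m'=\m_{\t'}$, equivalently
\[
\g\,\m_{\t}=\m_{\t'}\,(\g\tsr\g),\qquad\text{where }\m_{\t'}=(\m_A\tsr\m_B)(1\tsr\t'\tsr1).
\]
Indeed, granting this, $\m_{\t'}$ is associative with unit $1\tsr1$, so Proposition \ref{twisting maps and ttps} shows $\t'$ is a graded twisting map and identifies $A\tsr_{\t'}B$ with $(A\tsr B,\m')$; and then $\g$ is the desired algebra isomorphism.

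To prove the displayed identity I would chase both sides as maps $(A\tsr B)^{\tsr2}\to A\tsr B$. Because $\a,\b$ are algebra homomorphisms we have $(\a\tsr\b)(\m_A\tsr\m_B)=(\m_A\tsr\m_B)(\a\tsr\a\tsr\b\tsr\b)$, so the left side equals $(\m_A\tsr\m_B)(\a\tsr\a\tsr\b\tsr\b)(1\tsr\t\tsr1)$. Evaluated on $a\tsr b\tsr a'\tsr b'$, the operator $(\a\tsr\a\tsr\b\tsr\b)(1\tsr\t\tsr1)$ returns $\a(a)\tsr(\a\tsr\b)\t(b\tsr a')\tsr\b(b')$; and by the very definition of $\t'$ one has $(\a\tsr\b)\t(b\tsr a')=\t'\big(\b(b)\tsr\a(a')\big)$, so this equals $(1\tsr\t'\tsr1)(\a\tsr\b\tsr\a\tsr\b)(a\tsr b\tsr a'\tsr b')$. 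Applying $\m_A\tsr\m_B$ and observing that $\a\tsr\b\tsr\a\tsr\b=\g\tsr\g$ gives exactly the right side. This is a purely formal diagram chase; the only point requiring care is tracking which tensor factor each of $\a,\b$ (and later $\a^{-1},\b^{-1}$) acts on while commuting them past $1\tsr\t\tsr1$ and $\m_A\tsr\m_B$. One may also verify the unit axioms for $\t'$ by hand, for instance $\t'(1\tsr a)=(\a\tsr\b)\t(1\tsr\a^{-1}(a))=(\a\tsr\b)(\a^{-1}(a)\tsr1)=a\tsr1$ and symmetrically $\t'(b\tsr1)=1\tsr b$, but these are subsumed by the appeal to Proposition \ref{twisting maps and ttps}.

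Finally, compatibility with the canonical inclusions is immediate: for $a\in A$, $\g\,i_A(a)=\g(a\tsr1)=\a(a)\tsr1=i'_A(\a(a))$, so $\g i_A=i'_A\a$, and likewise $\g i_B=i'_B\b$. Hence $(\a,\b,\g)$ exhibits $A\tsr_{\t}B$ and $A\tsr_{\t'}B$ as isomorphic twisted tensor products of $A$ and $B$. There is no real obstacle beyond bookkeeping: the substance of the argument is the observation that the formula $\t'=(\a\tsr\b)\t(\b^{-1}\tsr\a^{-1})$ is precisely the one that makes the intertwining relation $\g\,\m_{\t}=\m_{\t'}(\g\tsr\g)$ hold, after which everything follows formally, via Proposition \ref{twisting maps and ttps}, from the corresponding statements for $\t$.
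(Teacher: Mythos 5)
Your proof is correct and is essentially the standard argument for this statement (which the paper does not prove itself but cites from \cite[Proposition 2.2]{C-G}): one checks the intertwining identity $(\a\tsr\b)\,\m_{\t}=\m_{\t'}\bigl((\a\tsr\b)\tsr(\a\tsr\b)\bigr)$, deduces via the associativity criterion of Proposition \ref{twisting maps and ttps} that $\t'$ is a twisting map, and observes that $\a\tsr\b$ respects the canonical inclusions. The only cosmetic point is that Proposition \ref{twisting maps and ttps} as stated here is the graded version, whereas the present statement is for arbitrary algebras; in the ungraded case one invokes the analogous criterion from \cite{Cap}, and the argument is unchanged.
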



\subsection{Koszul algebras}
There are many equivalent ways to define the notion of a Koszul algebra; see the book by Polishchuk and Positselski \cite{PP}. 
In this paper, we call a graded algebra $A$ a \emph{Koszul algebra} if the trivial module $_A \k = A_0 = A/A_{+}$ admits a graded projective resolution $$\cdots \rightarrow P_3 \rightarrow P_2 \rightarrow P_1 \rightarrow P_0 \rightarrow \k \to 0,$$ where, for all $i \geq 0$, $P_i$ is generated in degree $i$. It is well known (see \cite{PP}, for example) that every Koszul algebra is quadratic.

\section{Quadratic twisted tensor products of $\k[x]$ and $\k[z]$}
\label{ttps of two one-variable polynomial algebras}

In this section we classify all of the quadratic twisted tensor products of two polynomial rings of one variable. This completes the work begun in \cite[Section 6]{C-G}; see especially Theorems 6.2, 6.5 and Proposition 6.6 of that paper. We also use this classification in the next section.

Throughout this section we fix the following notation. Let $A = \k[x]$, $B = \k[z]$, and let $\t:B\tsr A\to A\tsr B$ be a graded twisting map. Then $$\t(z \tsr x) = ax^2 \tsr 1 + bx \tsr z + 1 \tsr cz^2$$ for some $a, b, c \in \k$.  Let $$C = \k \la x, z \ra/ \la zx-ax^2-bxz-cz^2\ra.$$ To indicate dependence on the parameters we will also write $C = C(a, b, c)$. 

\begin{prop}\label{oneSidedTwoGen}
If $a = 0$ or $c = 0$, then $\t$ has the unique extension property, hence $A\tsr_{\t} B$ is a quadratic algebra isomorphic to $C(a,b,c)$.
\end{prop}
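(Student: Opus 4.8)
The plan is to verify condition (2) of Theorem \ref{quadratic iff uep}, i.e., that $\t$ has the unique extension property. By the remark following Definition \ref{UEP defn} (\cite[Proposition 5.2]{C-G}), it suffices to show that $\t$ is \emph{one-sided}. Recall that for a general graded twisting map $\t:B\tsr A\to A\tsr B$ we have $\t(B_+\tsr A_+)\subseteq (A_+\tsr\k)\oplus(A_+\tsr B_+)\oplus(\k\tsr B_+)$, and $\t$ is one-sided precisely when the first summand $A_+\tsr\k$ can be dropped (giving the ``right-handed'' case) or the last summand $\k\tsr B_+$ can be dropped (the ``left-handed'' case). So the strategy is: analyze the structure of $\t$ in each degree, using the defining cocycle identity $\t(\mu_B\tsr\mu_A)=(\m_A\tsr\m_B)(1\tsr\t\tsr1)(\t\tsr\t)(1\tsr\t\tsr1)$, and show that when $a=0$ the image of $\t$ never has a component in $A_+\tsr\k$, while when $c=0$ the image never has a component in $\k\tsr B_+$.

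The key computation is in low degrees and then an induction. In degree $2$, we are given $\t(z\tsr x)=ax^2\tsr 1+bx\tsr z+1\tsr cz^2$; the terms $x^2$, $xz$, $z^2$ here are shorthand for the corresponding basis vectors of $(A\tsr B)_2$, namely $x^2\tsr 1$, $x\tsr z$, $1\tsr z^2$. If $a=0$, then $\t(z\tsr x)=bx\tsr z+1\tsr cz^2\in(A_+\tsr B_+)\oplus(\k\tsr B_+)$, so in degree $2$ the image avoids $A_+\tsr\k$. Dually, if $c=0$ then $\t(z\tsr x)=ax^2\tsr1+bx\tsr z\in(A_+\tsr\k)\oplus(A_+\tsr B_+)$, avoiding $\k\tsr B_+$. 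The inductive step is to show this persists in all degrees: because $A=\k[x]$ and $B=\k[z]$ are generated in degree $1$, any element of $(B\tsr A)_+$ in positive degree is a $\k$-linear combination of products $z^i\cdot x^j$ (i.e. images under $\mu_B\tsr\mu_A$ of tensors of degree-$1$ generators), so $\t$ on such an element is computed by iterating the degree-$2$ formula via the cocycle identity. One then checks that the subspace $(A_+\tsr\k)\oplus(A_+\tsr B_+)$ (respectively $(A_+\tsr B_+)\oplus(\k\tsr B_+)$) of $A\tsr B$ is closed under the relevant operations — multiplication in $A\tsr_\t B$ and application of $\t$ — so that the one-sided containment propagates. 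Concretely: in the case $c=0$, $\t(z\tsr x)$ lies in $A\tsr\k\oplus A_+\tsr B_+$; applying $\mu_A\tsr\mu_B$ and $1\tsr\t\tsr1$ to products built from this keeps us inside $A\tsr B_+^{\le 1}$-type subspaces in a way that never introduces a pure $\k\tsr z^k$ term, because producing such a term would require $x$'s to be annihilated, which the formula never does when $c=0$. The symmetric bookkeeping handles $a=0$.

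The main obstacle is making the induction honest: one must carefully track, through the fourfold composite $(\m_A\tsr\m_B)(1\tsr\t\tsr1)(\t\tsr\t)(1\tsr\t\tsr1)$, that no $A_+\tsr\k$ (resp. $\k\tsr B_+$) component is ever created, and this requires knowing $\t$ on all of $(B\tsr A)_n$, not just on $z\tsr x$. The cleanest way to organize this is to define the candidate one-sided subspace $W\subseteq A\tsr B$ — say $W=(A_+\tsr\k)\oplus(A_+\tsr B_+)$ when $c=0$ — together with its ``input'' analogue $W'\subseteq B\tsr A$, prove by induction on degree that $\t(W')\subseteq W$ and that $W$ is a subalgebra-like subspace stable under $\m_\t$, and simultaneously that every element of $(B_+\tsr A_+)_n$ that actually arises lies in $W'$. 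Once one-sidedness is established, Theorem \ref{quadratic iff uep} immediately gives that $A\tsr_\t B$ is quadratic, and since the defining relation of the quadratic quotient of $A\ast B$ is exactly $z\tsr x-\t(z\tsr x)$, the presentation $(A\ast B)/I_\t\cong C(a,b,c)$ from \cite[Proposition 2.5]{C-G} (cf. Theorem \ref{quadratic iff uep}(3)) identifies $A\tsr_\t B$ with $C(a,b,c)$, completing the proof.
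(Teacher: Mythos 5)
Your proposal is correct and follows exactly the paper's route: observe that $a=0$ or $c=0$ forces $\t$ to be one-sided, then invoke \cite[Proposition 5.2]{C-G} for the unique extension property and Theorem \ref{quadratic iff uep} for quadraticity and the identification with $C(a,b,c)$. The only difference is that you sketch the (routine) degree-by-degree induction verifying one-sidedness, a step the paper's proof asserts in a single sentence without detail.
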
 

\begin{proof}
The hypotheses imply the graded twisting map $\t$ is one-sided. The result follows from \cite[Proposition 5.2]{C-G} and Theorem \ref{quadratic iff uep}.
\end{proof}


When $ac \ne 0$, applying Proposition \ref{isomorphisms of ttps} to the automorphism of $B$ given by $z\mapsto z/c$ shows there is no loss of generality in assuming $c = 1$. Thus we consider $$C(a, b,1) = \k \la x, z \ra/\la zx-ax^2-bxz-z^2 \ra.$$ By Theorem \ref{quadratic iff uep}, determining when $\t$ has the unique extension property is equivalent to determining when $C$ is a graded twisted tensor product of $A$ and $B$. Note that for the latter to hold, it is necessary that the Hilbert series of $C$ be $(1-t)^{-2}$. 

\begin{lemma}
\label{quad hilb series}
The Hilbert series of the algebra $C(a, b,1)$ is $(1-t)^{-2}$ unless $a=1$ and $b=-1$.
\end{lemma}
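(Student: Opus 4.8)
The plan is to compute the Hilbert series of $C(a,b,1)$ directly from a monomial basis argument. The algebra $C$ has a single quadratic relation $zx = ax^2+bxz+z^2$, which lets us rewrite any occurrence of $zx$ in terms of words in which no $z$ precedes an $x$; thus the set of ordered monomials $\{x^i z^j : i,j \geq 0\}$ spans $C$. To show this spanning set is in fact a basis (forcing the Hilbert series to be $\sum_{i,j\geq 0} t^{i+j} = (1-t)^{-2}$), I would invoke the Diamond Lemma / Bergman's theorem: impose the rewriting rule $zx \to ax^2+bxz+z^2$ and check that the unique overlap ambiguity, coming from the word $zzx$ (equivalently $z\cdot zx$ versus there being no left overlap — the only genuine ambiguity is the overlap of $zx$ with itself inside $z(zx)$, i.e. resolving $z\cdot z\cdot x$ two ways is not available, so the real ambiguity is $(zz)x$ vs $z(zx)$ — more precisely the overlap word is $zzx$ with the single relation applied at the right), is resolvable. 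One reduces $zzx$ by applying the rule to the rightmost $zx$, getting $z(ax^2+bxz+z^2) = azx\cdot x + bzx\cdot z + z^3$, and then must repeatedly reduce the newly created $zx$ subwords; the claim is that this process terminates in a well-defined normal form precisely when a certain linear system is consistent, and the only obstruction occurs when $a=1$, $b=-1$.

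Concretely, I expect the computation to go as follows. Set $\deg x = \deg z = 1$ and work degree by degree; in degree $n$ the ordered monomials number $n+1$, so it suffices to show the relations never cut the dimension below $n+1$, i.e. that the dimension of the degree-$n$ part of the ideal generated by $r = zx-ax^2-bxz-z^2$ equals $\dim(\k\la x,z\ra)_n - (n+1) = 2^n - (n+1)$. Equivalently I would track, for each $n$, the span of $\{w_1 r w_2 : \deg(w_1 r w_2)=n\}$ and show it has full expected rank. The cleanest route is the Diamond Lemma: with the order making $x < z$ and comparing words first by length then lexicographically (so that $zx$ is the leading term of $r$), every ambiguity to resolve is an overlap ambiguity of $zx$ with $zx$. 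Since $zx$ overlaps itself only trivially in length (the suffix $x$ of $zx$ never equals a prefix $z$ of $zx$), there are in fact \emph{no} overlap ambiguities and \emph{no} inclusion ambiguities — so the Diamond Lemma would give the PBW basis unconditionally, which contradicts the stated exceptional case. Therefore the correct order must be the opposite one, $z < x$, making $ax^2$ or $z^2$ the leading term; then genuine overlap ambiguities appear and their resolvability is a real condition.

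This sign issue is the crux, and it is the step I expect to be the main obstacle: getting the monomial order and the associated rewriting system set up so that the single relation's leading term creates exactly one family of overlap ambiguities whose resolution is governed by a scalar recursion, and then identifying that the recursion degenerates exactly at $(a,b)=(1,-1)$. I anticipate that the right setup is to take $x^2$ as the leading monomial of the relation (valid after checking $a\neq 0$, with the $a=0$ case handled separately and trivially since then $\t$ is one-sided and $C$ is an honest twisted tensor product by Proposition \ref{oneSidedTwoGen}, forcing Hilbert series $(1-t)^{-2}$). Rewriting $x^2 \to a^{-1}(zx - bxz - z^2)$, the overlap ambiguity comes from the word $x^3$ (overlap of $x^2$ with $x^2$), and resolving $x\cdot x^2$ versus $x^2\cdot x$ produces, after full reduction to normal form in the monomials $\{z^i x^j\}$ — wait, one must be careful which monomials are normal; with $x^2$ leading, normal words are those with at most one $x$, i.e. $z^i$ and $z^i x$, giving only $2$ monomials in each degree $\geq 1$, which is wrong. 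So in fact neither choice of a \emph{single} leading monomial yields the PBW count, signalling that the honest approach is a direct dimension count of the relation space $R_n \subseteq (\k\la x,z\ra)_n$ rather than a Gröbner basis: one shows $\dim R_n = 2^n - (n+1)$ for all $n$ unless $a=1,b=-1$, in which case the rank drops by exactly $1$ in each sufficiently high degree. The combinatorial heart is then a determinant or rank computation for the matrix expressing $\{x^i r x^{j} z^k,\ldots\}$ in the monomial basis, and I expect the determinant to factor with $(a - 1 + b)$ or $(a-1)(b+1)$-type factors — more likely, after normalizing, the vanishing locus is cut out by the resultant-like condition that collapses precisely to $a = 1$ together with $b = -1$. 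Verifying that this is the \emph{only} bad locus, and that off it the Hilbert series is \emph{exactly} $(1-t)^{-2}$ (no smaller, since the ordered monomials always span, and no larger, since they number $n+1$), completes the proof.
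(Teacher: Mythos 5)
Your proposal has two genuine gaps, one local and one global. The local one: your opening claim that the ordered monomials $\{x^i z^j\}$ always span $C(a,b,1)$ is unjustified, and in fact false in general. The rewriting rule $zx \to ax^2+bxz+z^2$ does not terminate, because the term $z^2$ on the right regenerates the pattern: $zzx \to azx^2+bzxz+z^3$ and then $zx^2 \to ax^3+bxzx+zzx$, looping back to $zzx$. Untangling this loop is precisely the content of Lemma \ref{key relation}, and the conclusion $zx^k\in{\rm Span}\{x^iz^j\}$ requires the nonvanishing of the polynomials $f_n(a,b)$ (it already fails at $k=2$ when $a=1$). Consequently your claimed upper bound ``the Hilbert series is at most $(1-t)^{-2}$'' is wrong: in the exceptional case the series is the Fibonacci series $\sum F_nt^n$, which is strictly larger in every degree $\geq 3$. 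Relatedly, you consider the leading monomials $zx$ and $x^2$, correctly reject both, and then stall; but the correct choice is available and you never reach it. Under left-lexicographic order with $x<z$ one has $x^2<xz<zx<z^2$, so the leading monomial of $zx-ax^2-bxz-z^2$ is $z^2$. The normal words are those avoiding $z^2$, which are counted by the Fibonacci numbers, the unique overlap is $z^3$, and resolving it produces the degree-$3$ element $G=(1+b)zxz+(a-1)zx^2+(b^2-a)x^2z+a(b+1)x^3$, which vanishes if and only if $a=1$ and $b=-1$; this is exactly the computation the paper performs.

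The global gap is that even with the degree-$3$ computation in hand, you have no mechanism for controlling all higher degrees: your fallback of a ``direct rank computation'' of $\dim R_n$ for every $n$ is not carried out and would be substantially harder than the problem warrants. The paper's proof rests on an external input you do not invoke: by Polishchuk--Positselski (p.~126), a quadratic algebra on two degree-one generators with a single relation has Hilbert series either $(1-t)^{-2}$ or $\sum_n F_nt^n$, and these first differ in degree $3$ ($4$ versus $5$). That dichotomy reduces the entire lemma to deciding whether $\dim C_3=5$, i.e.\ whether $G=0$. Without it (or an equivalent all-degrees argument, such as a complete Gr\"obner basis), your proposal does not establish the Hilbert series in any degree beyond $3$.
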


\begin{proof}
By \cite[p. 126]{PP}, the Hilbert series of $C=C(a,b,1)$ is either $(1-t)^{-2}$ or the ``Fibonacci series" $\sum_{n} F_n t^n$, where $F_0 = 1$, $F_1 = 2$ and $F_{n+2} = F_{n+1} + F_n$, for $n \geq 0$. We claim the Hilbert series is  $\sum_{n} F_n t^n$ if and only if $a = 1$ and $b = -1$. 

To see this, order the generators of $C$ as $x < z$, and order monomials using left-lexicographic order. By the Diamond Lemma (see, for example, \cite{Berg}), the element  $$G = (1+b)zxz + (a-1)zx^2+(b^2-a)x^2z+a(b+1)x^3$$ of the free algebra $\k\la x,z\ra$ is (up to scaling) the only degree-3 element of a Gr\"obner basis with respect to the chosen monomial term order.
 
It follows that $\dim C_3 = 5$ if and only if $G = 0$, which happens if and only if $a = 1$ and $b = -1$.
\end{proof}

Next we define some sequences of polynomials that are the key to the rest of the classification problem. Let  $\{e_n(t, u)\}, \{f_n(t, u)\}, \{g_n(t, u)\}, \{h_n(t, u)\}$ in $\k[t, u]$ be defined as follows:
\begin{align*}
e_0(t,u) &= 1, &f_0(t,u) &= 1, \\ 
e_n(t,u) &= ue_{n-1}(t,u)+f_{n-1}(t,u), &f_n(t,u) &= -t e_{n-1}(t,u)+f_{n-1}(t,u), & n&\ge 1,\\
g_n(t,u) &= (1-u)e_n(t,u)-f_n(t,u), &h_n(t,u) &= -te_n(t,u), &  n&\ge 0.
\end{align*} 

\begin{lemma}\label{key relation}\ 
\begin{enumerate}
\item For all $n\ge 0$, $f_n(t,-t )= (1-t)^n$.
\item For all $n\ge 0$, the relation $$e_nzx^nz = f_nzx^{n+1}+g_nx^{n+1}z+h_nx^{n+2}$$ holds in $C(a,b,1)$, where $e_n = e_n(a, b)$, $f_n = f_n(a, b)$, $g_n = g_n(a,b)$, and $h_n = h_n(a,b)$.
\item If $f_i(a,b)\neq 0$ for all $1\le i\le n-1$, then  $zx^k\in {\rm Span}\{x^i z^j : i, j \geq 0\}\subseteq C(a,b,1)$ for $1 \leq k \leq n$. 
\end{enumerate}
\end{lemma}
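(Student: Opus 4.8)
The plan is to prove the three statements more or less in order, since (2) and (3) both rely on (1) or on each other. For part (1), I would set $u = -t$ in the recursion for $f_n$ and prove $f_n(t,-t) = (1-t)^n$ by induction on $n$; the induction step requires understanding $e_n(t,-t)$ as well, so I would simultaneously track $e_n(t,-t)$. Observe $e_n(t,-t) = -t e_{n-1}(t,-t) + f_{n-1}(t,-t)$ and $f_n(t,-t) = -t e_{n-1}(t,-t) + f_{n-1}(t,-t)$, so in fact $e_n(t,-t) = f_n(t,-t)$ for all $n \geq 1$. Feeding this back in gives $f_n(t,-t) = -t f_{n-1}(t,-t) + f_{n-1}(t,-t) = (1-t) f_{n-1}(t,-t)$, and since $f_0 = 1$ we get $f_n(t,-t) = (1-t)^n$. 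This is routine.

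For part (2), I would argue by induction on $n$, using the defining relation $zx = ax^2 + bxz + z^2$ of $C(a,b,1)$ as the base case (suitably rearranged: $z x^0 z \cdot$ does not quite match, so I would check $n = 0$ directly, where the claimed identity reads $e_0 zz = f_0 zx + g_0 xz + h_0 x^2$, i.e. $z^2 = zx - bxz - ax^2$, which is exactly the relation). For the inductive step, I would start from $e_{n-1} z x^{n-1} z = f_{n-1} z x^n + g_{n-1} x^n z + h_{n-1} x^{n+1}$, multiply on the right by $x$ and... no — the cleaner move is to multiply the relation $zx^{n-1}z = \cdots$ on the left by nothing and instead right-multiply the degree-$(n+1)$ relation in the right form. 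Concretely I would take the identity for $n-1$, right-multiply by $x$ to get an expression for $e_{n-1} z x^{n-1} z x$, then rewrite $zx$ at the end using the relation $zx = ax^2+bxz+z^2$, collect the $zx^n z$ term on one side, and identify the resulting coefficients with the recursions defining $e_n, f_n, g_n, h_n$. This is a bookkeeping computation; the main thing to get right is the algebra of pushing the single $z$ past the block $x^{n}$, which is where the recursion structure is forced.

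For part (3), I would induct on $k$. For $k=1$ the relation $zx = ax^2 + bxz + z^2$ already expresses $zx$ in the span of $\{x^i z^j\}$. Assuming $zx^{k-1} \in \mathrm{Span}\{x^i z^j\}$ for all smaller exponents (equivalently, assuming $z x^{k-1}$ has been rewritten in normal form), I would use part (2) with $n = k-1$: since $f_{k-1}(a,b) \neq 0$ by hypothesis when $k-1 \leq n-1$, i.e. $k \leq n$, I can solve $$z x^{k} = f_{k-1}^{-1}\bigl(e_{k-1} z x^{k-1} z - g_{k-1} x^{k-1} z - h_{k-1} x^{k+1}\bigr)\cdot(\text{adjust})$$ — more precisely, part (2) gives $e_{k-1} z x^{k-1} z = f_{k-1} z x^{k} + g_{k-1} x^{k} z + h_{k-1} x^{k+1}$, and since $z x^{k-1}$ is already in normal form, $z x^{k-1} z$ is a combination of terms $x^i z^j$ right-multiplied by $z$, hence itself in the span; solving for $z x^k$ (legal because $f_{k-1} \neq 0$) expresses it in $\mathrm{Span}\{x^i z^j\}$.

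I expect the main obstacle to be part (2): carefully verifying that right-multiplying the degree-$(n-1)$ identity by $x$ and substituting the quadratic relation produces coefficients matching the four given recursions, without sign or index errors — in particular matching $g_n = (1-b)e_n - f_n$ and $h_n = -a e_n$ requires the substitution $z^2 = zx - bxz - ax^2$ to be applied in exactly the right spot. Parts (1) and (3) are then short consequences.
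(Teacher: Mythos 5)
Your proposal is correct and follows essentially the same route as the paper: part (1) via the observation $e_n(t,-t)=f_n(t,-t)$, part (2) by induction using the inductive identity together with the defining relation (the paper right-multiplies by $z$ and substitutes for $z^2$, whereas you right-multiply by $x$ and substitute for the terminal $zx$ — an equivalent bookkeeping that yields the same recursion $e_n=be_{n-1}+f_{n-1}$, $f_n=-ae_{n-1}+f_{n-1}$, $g_n=bg_{n-1}-h_{n-1}$, $h_n=h_{n-1}+ag_{n-1}$), and part (3) by the straightforward induction the paper leaves to the reader, solving for $zx^k$ using $f_{k-1}\neq 0$.
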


\begin{proof}
From the recursive formulas it is clear that $e_n(t, -t) = f_n(t,-t)$ for all $n\ge 0$. Thus for $n\ge 1$ we have
\begin{align*}
f_n(t,-t) &= -te_{n-1}(t,-t)+f_{n-1}(t,-t) \\
&= -tf_{n-1}(t,-t)+f_{n-1}(t,-t) \\
&= f_{n-1}(t,-t)(1-t),
\end{align*}
and hence (1) follows by induction.

The relation in (2) holds for $n = 0$ by definition of $C(a,b,1)$. Suppose that the relation  
\[e_{n-1}zx^{n-1}z = f_{n-1}zx^{n}+g_{n-1}x^{n}z+h_{n-1}x^{n+1} \tag{\textasteriskcentered} \label{inductHyp}\]
holds for some $n \geq 1$.
Multiplying through by $z$ on the right and using the defining relation of $C(a,b,1)$ yields
$$e_{n-1}zx^{n-1}(zx-bxz-ax^2) = f_{n-1}zx^{n}z+g_{n-1}x^{n}(zx-bxz-ax^2)+h_{n-1}x^{n+1}z.$$
Next, using the relation (\ref{inductHyp})  on the first term, rearranging and collecting terms we have 
$$(be_{n-1}+f_{n-1})zx^nz = (-ae_{n-1}+f_{n-1})zx^{n+1}+(bg_{n-1}-h_{n-1})x^{n+1}z+(h_{n-1}+ag_{n-1})x^{n+2}.$$
By the recursive definitions
\begin{align*}
g_n &= (1-b)[be_{n-1}+f_{n-1}]-[-ae_{n-1}+f_{n-1}] \\
&= b[(1-b)e_{n-1}-f_{n-1}]+ae_{n-1} \\
&= bg_{n-1}-h_{n-1}, \\
h_n &= -ae_n \\
&= -a(be_{n-1}+f_{n-1}) \\
&= -ae_{n-1}+a[(1-b)e_{n-1}-f_{n-1}] \\
&= h_{n-1}+ag_{n-1},
\end{align*}
therefore we see that we have 
$$e_nzx^nz = f_nzx^{n+1}+g_nx^{n+1}z+h_nx^{n+2},$$ as desired.

Statement (3) now follows by a straightforward induction.

\end{proof}

\begin{thm} 
\label{quadratic ttps of polynomial rings of one variable}
The algebra $C(a,b,1)$ is a graded twisted tensor product of $\k[x]$ and $\k[z]$ if and only if $a, b \in \k$ satisfy $f_n(a,b) \ne 0$ for all $n \geq 0$. 
\end{thm}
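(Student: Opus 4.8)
The plan is to use Theorem \ref{quadratic iff uep} together with the Hilbert series criterion to reduce the statement to a dimension count, then control that dimension count via the polynomials $f_n$. By Theorem \ref{quadratic iff uep}, $C(a,b,1)$ is a graded twisted tensor product of $\k[x]$ and $\k[z]$ precisely when $\t$ has the unique extension property, equivalently when $C(a,b,1)$ is quadratic and the canonical map $\k[x]\tsr\k[z]\to C(a,b,1)$ is a vector-space isomorphism; the latter forces the Hilbert series of $C(a,b,1)$ to equal $(1-t)^{-2}$, i.e. $\dim C_n = n+1$ for all $n$, and conversely if the Hilbert series is $(1-t)^{-2}$ then $\{x^iz^j\}$ is a basis (they span by the relation and there are the right number of them), so $C(a,b,1)$ really is the twisted tensor product. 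Thus the theorem reduces to: \emph{the Hilbert series of $C(a,b,1)$ is $(1-t)^{-2}$ if and only if $f_n(a,b)\neq 0$ for all $n\geq 0$.} Note $f_0 = 1$ is never zero, so the condition is really about $n\geq 1$.

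For the forward direction (contrapositive): suppose $f_n(a,b) = 0$ for some $n\geq 1$; I would take the least such $n$, so $f_i(a,b)\neq 0$ for $1\leq i\leq n-1$. By Lemma \ref{key relation}(3), $zx^k$ lies in ${\rm Span}\{x^iz^j\}$ in $C(a,b,1)$ for $1\leq k\leq n$; combined with the key relation of Lemma \ref{key relation}(2) at stage $n$, namely $e_n zx^nz = f_n zx^{n+1} + g_n x^{n+1}z + h_n x^{n+2}$, the vanishing $f_n = 0$ produces an extra linear dependence among the monomials $\{x^iz^j\}$ in degree $n+2$ (one needs to check the coefficient situation forces a genuine drop — here I expect to invoke Lemma \ref{quad hilb series}, which says the only alternative to $(1-t)^{-2}$ is the Fibonacci series, so \emph{any} drop below $(1-t)^{-2}$ in some degree is enough to conclude the Hilbert series is the Fibonacci one; and one checks directly that $f_1(a,b) = 1-a$, so the $n=1$ case recovers the $a=1$, $b=-1$ condition of Lemma \ref{quad hilb series} once one also tracks $g_1, h_1$). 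For the reverse direction: suppose $f_n(a,b)\neq 0$ for all $n\geq 1$; then Lemma \ref{key relation}(3) applies for every $n$, so $zx^k \in {\rm Span}\{x^iz^j\}$ for all $k\geq 1$, whence every monomial in $x,z$ reduces to the form $x^iz^j$ in $C(a,b,1)$ (push all $z$'s to the right using $zx^k$-rewriting), so $\dim C_n \leq n+1$; since $\dim C_n \geq n+1$ always (any quadratic algebra on two generators with one relation surjects onto... more precisely the Hilbert series dominates that of the quotient by the largest monomial ideal, or simply cite that $(1-t)^{-2}$ and the Fibonacci series are the only two possibilities and Fibonacci is strictly larger), we get $\dim C_n = n+1$ and the Hilbert series is $(1-t)^{-2}$.

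The main obstacle I anticipate is the bookkeeping in the forward direction: showing that $f_n(a,b) = 0$ genuinely forces a dimension drop rather than being absorbed into relations already present. The clean way to handle this is to lean entirely on Lemma \ref{quad hilb series}: the Hilbert series is $(1-t)^{-2}$ \emph{unless} $a=1, b=-1$, in which case it is Fibonacci and certainly $f_1(1,-1) = 1-1 = 0$. Wait — that only handles $n=1$. So the real work is an inductive refinement of Lemma \ref{quad hilb series} to higher degrees: I would prove that if $f_i(a,b)\neq 0$ for $i<n$ but $f_n(a,b) = 0$, then the degree-$(n+2)$ piece of a Gröbner basis for $C(a,b,1)$ (in the left-lex order from Lemma \ref{quad hilb series}) is nonzero, exactly as in the proof of Lemma \ref{quad hilb series} but one degree higher, with the overlap $zx^nz\cdot$ (the relation with itself shifted) producing the element $e_n zx^nz - f_n zx^{n+1} - g_n x^{n+1}z - h_n x^{n+2}$, whose leading term survives precisely when $f_n\ne 0$ fails to force it into the ideal — and then $\dim C_{n+2} < n+3$, contradicting $(1-t)^{-2}$, so by Lemma \ref{quad hilb series}'s dichotomy the Hilbert series is Fibonacci. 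That this overlap computation is consistent with the inductive vanishing of the earlier $f_i$ is the delicate point, but it is essentially a one-line consequence of Lemma \ref{key relation}(2) and (3) already established.
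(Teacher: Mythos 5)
Your reduction of the theorem to the statement ``the Hilbert series of $C(a,b,1)$ is $(1-t)^{-2}$ if and only if $f_n(a,b)\neq 0$ for all $n$'' is false, and this sinks the ``only if'' direction of your argument. By Lemma \ref{quad hilb series} the Hilbert series equals $(1-t)^{-2}$ for \emph{every} $(a,b)\neq(1,-1)$; yet for instance $f_1(1,0)=0$, and more generally for each $n\ge 2$ there are pairs $(a,b)$ with $a\neq 1$, $f_i(a,b)\neq 0$ for $i<n$ and $f_n(a,b)=0$. For such pairs the Hilbert series is correct but $C(a,b,1)$ is still not a twisted tensor product: what fails is not the dimension count but the requirement that the specific linear map $\k[x]\tsr\k[z]\to C$, $x^i\tsr z^j\mapsto x^iz^j$, be bijective. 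The monomials $x^iz^j$ can simultaneously fail to span $C_{n+2}$ and be linearly dependent there, with $\dim C_{n+2}=n+3$ all the while. Your parenthetical ``they span by the relation'' is precisely the nontrivial point: spanning is what Lemma \ref{key relation}(3) provides, and it genuinely requires the nonvanishing of the $f_i$ (already $(1-a)\,zx^2\in{\rm Span}\{x^iz^j\}$ is all the defining relation gives you in degree $3$, so spanning fails at $a=1$). Consequently your proposed ``inductive refinement of Lemma \ref{quad hilb series}'' — deducing $\dim C_{n+2}<n+3$ from a new degree-$(n+2)$ Gr\"obner basis element — cannot work: it would contradict Lemma \ref{quad hilb series} itself, and in any case the Gr\"obner basis of a generic $C(a,b,1)$ acquires new elements in every degree without the Hilbert series ever dropping below $(1-t)^{-2}$.

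The correct route, which you brush against and then abandon, is the one the paper takes: when $n\ge 2$ is minimal with $f_n=0$, rewrite the identity of Lemma \ref{key relation}(2) as $e_nzx^nz-g_nx^{n+1}z+ae_nx^{n+2}=0$, use Lemma \ref{key relation}(3) to express $zx^n$ (hence $zx^nz$) in ${\rm Span}\{x^iz^j\}$, and observe that the resulting relation among the monomials $x^iz^j$ is \emph{nontrivial} because its $x^{n+2}$-coefficient is $-h_n=ae_n$ (no term of $zx^nz$ or $x^{n+1}z$, once rewritten, ends in $x$). This exhibits a failure of injectivity of the canonical map, not a dimension drop. Making this work requires two verifications you omit entirely: that $a\neq 0$ (the case $a=0$ is handled separately via Proposition \ref{oneSidedTwoGen}, where all $f_n=1$ anyway) and that $e_n\neq 0$, which the paper derives from the minimality of $n$ together with $b\neq -a$ (itself a consequence of Lemma \ref{key relation}(1) and $a\neq 1$). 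The separate treatment of $f_1=0$ with $b=-1$ (where the Hilbert series really is Fibonacci) versus $b\neq -1$ (where an explicit dependence with nonzero $x^3$-coefficient must be produced) is also essential and is not recoverable from your Hilbert-series framing. Your ``if'' direction (all $f_n\neq 0$ implies spanning plus the dimension bound, hence a basis) is essentially the paper's argument and is fine.
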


\begin{proof}
For all $n \geq 0$, write $e_n, f_n, g_n, h_n$ for $e_n(a,b), f_n(a,b), g_n(a,b), h_n(a,b)$, respectively. Let $A=\k[x]$, $B=\k[z]$, and $C=C(a,b,1)$.

If $a=0$, then $f_n=1$ for all $n\ge 0$, and by Proposition \ref{oneSidedTwoGen}, $C$ is a graded twisted tensor product of $A$ and $B$. Henceforth we assume $a\neq 0$.

Suppose $f_n \ne 0$ for all $n \geq 0$.
Let $i_A:A\to C$ and $i_B:B\to C$ be the graded algebra homomorphisms determined respectively by $i_A(x)=x$ and $i_B(z)=z$. Let  $$S={\rm Span}\{x^i z^j : i, j \geq 0\}\subseteq C.$$ By Lemma \ref{key relation}(3), since $f_n \ne 0$ for all $n \geq 0$, we know $zx^k \in S$ for all $k \geq 0$. It follows that $C = S$. In particular, the canonical linear map $$(i_A,i_B):A \tsr B \to C$$ given by $a\tsr b\mapsto i_A(a)i_B(b)$ is surjective.

Since $f_1 = 1-a \ne 0$, we know $a \ne 1$. Thus by Lemma \ref{quad hilb series}, the Hilbert series of $C$ is $(1-t)^{-2}$. Hence $i_A$ and $i_B$ are injective and $(i_A,i_B):A \tsr B \to C$ is a linear isomorphism. It follows that $C$ is a twisted tensor product of $A$ and $B$.

Conversely, suppose that $a, b \in \k$ are such that $f_n(a,b) = 0$ for some $n \geq 1$.

If $f_1 = 0$, then $a = 1$. Suppose $b = -1$. Then by Lemma \ref{quad hilb series} the Hilbert series of $C$ is not $(1-t)^{-2}$,  so $C$ is not a twisted tensor product of $A$ and $B$. If $b \ne -1$, the relation in Lemma \ref{key relation}(2) yields $$(1+b)zxz +(b^2-1)x^2z+(b+1)x^3 = 0.$$
Substituting $zx = ax^2+bxz+z^2$ results in $$(1+b)(ax^2+bxz+z^2)z + (b^2-1)x^2z+(b+1)x^3 = 0.$$ Considering that the coefficient of $x^3$ is nonzero we see that this is a nontrivial dependence relation in $S$. Thus $C$ is not a twisted tensor product of $A$ and $B$. This concludes the case $f_1 = 0$.

Henceforth we assume $n\ge 2$ is minimal such that $f_n = 0$. Since $f_1\neq 0$, we have $a\neq 1$.  Thus Lemma \ref{key relation}(1) implies $b \ne -a$. We claim that $e_n \ne 0$. 

Suppose, to the contrary, that $e_n = 0$. Then the recurrence formulas for $f_n$ and $e_n$ give
$f_{n-1} = ae_{n-1}$ and $f_{n-1} = -be_{n-1}.$ Since $b \ne -a$, we have $e_{n-1} = 0$ and hence $f_{n-1}=0$,  contradicting the minimality of $n$. Thus $e_n\ne 0$.

By Lemma \ref{key relation}(2) the relation 
\[e_nzx^nz - g_nx^{n+1}z + ae_nx^{n+2} = 0\tag{\textasteriskcentered\textasteriskcentered} \label{depRel}\] holds in $C$, and since $f_1, \ldots, f_{n-1}$ are all nonzero we know that $zx^n \in S$ by Lemma \ref{key relation}(3). Since $a\neq 0$ and $e_n\neq 0$, the coefficient of $x^{n+2}$ in (\ref{depRel}) is nonzero, so (\ref{depRel}) is a nontrivial dependence relation in $S$. Thus $C$ is not a twisted tensor product of $A$ and $B$.

\end{proof}

Noting that under the change of variables $z\mapsto z/c$ employed above, the coefficient of $x^2$ in $\t(z\tsr x)$ becomes $ac$, we have the following.

\begin{cor}
\label{abc}
Let $A = \k[x]$, $B = \k[z]$, and let $\t:B\tsr A\to A\tsr B$ be a graded twisting map. 
Let $a, b, c \in \k$ be given by $$\t(z \tsr x) = ax^2 \tsr 1 + bx \tsr z + 1 \tsr cz^2.$$
Then $A\tsr_{\t} B$ is quadratic unless $f_n(ac,b)=0$ for some $n\ge 0$.
\end{cor}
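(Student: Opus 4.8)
The plan is to reduce to Theorem \ref{quadratic ttps of polynomial rings of one variable} by separating the cases $c = 0$ and $c \neq 0$. First I would dispose of the case $c = 0$: then $\t$ is one-sided, so by Proposition \ref{oneSidedTwoGen} the algebra $A \tsr_\t B$ is quadratic, and indeed $f_n(0 \cdot a, b) = f_n(0,b)$; since $f_0 = 1$ and $e_n(0,b) = u e_{n-1}(0,b) + f_{n-1}(0,b)$, $f_n(0,b) = f_{n-1}(0,b)$ by the recursion (as $-t e_{n-1}$ vanishes at $t=0$), one checks $f_n(0,b) = 1 \neq 0$ for all $n$. So the implication ``$A\tsr_\t B$ not quadratic $\Rightarrow$ $f_n(ac,b) = 0$ for some $n$'' holds vacuously when $c = 0$, and there is nothing further to prove in that case.

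Next, assume $c \neq 0$. The twisting map $\t$ has $\t(z \tsr x) = ax^2 \tsr 1 + bx \tsr z + 1 \tsr cz^2$, and by Proposition \ref{isomorphisms of ttps} applied to the automorphism $\b$ of $B = \k[z]$ given by $z \mapsto z/c$ (and $\a = \mathrm{id}_A$), the twisted tensor product $A \tsr_\t B$ is isomorphic, as a twisted tensor product of $A$ and $B$, to $A \tsr_{\t'} B$ where $\t'(z \tsr x) = (ac)x^2 \tsr 1 + bx \tsr z + 1 \tsr z^2$. In particular $A \tsr_{\t'} B = C(ac, b, 1)$ in the notation of the section, and isomorphism of twisted tensor products preserves the property of being quadratic (equivalently, by Theorem \ref{quadratic iff uep}, the unique extension property of the twisting map). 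Therefore $A \tsr_\t B$ is quadratic if and only if $C(ac, b, 1)$ is a graded twisted tensor product of $\k[x]$ and $\k[z]$, which by Theorem \ref{quadratic ttps of polynomial rings of one variable} holds if and only if $f_n(ac, b) \neq 0$ for all $n \geq 0$. Taking the contrapositive gives exactly the statement: $A \tsr_\t B$ fails to be quadratic only if $f_n(ac, b) = 0$ for some $n \geq 0$.

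This argument is essentially bookkeeping, so I do not expect a genuine obstacle; the one point requiring a little care is making sure the rescaling automorphism lands us precisely in the normalized family $C(ac,b,1)$ with the stated $x^2$-coefficient — which is just the observation already recorded in the sentence preceding the corollary — and confirming that "quadratic" transfers across the twisted-tensor-product isomorphism of Proposition \ref{isomorphisms of ttps}, which is immediate since such an isomorphism is in particular a graded algebra isomorphism preserving the generating subspace in degree $1$.
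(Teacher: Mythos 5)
Your proof is correct and follows the same route as the paper, which dispatches this corollary with a one-line remark that the rescaling $z\mapsto z/c$ turns the coefficient of $x^2$ into $ac$, reducing everything to Proposition \ref{oneSidedTwoGen} (for $c=0$) and Theorem \ref{quadratic ttps of polynomial rings of one variable} (for $c\neq 0$). Your write-up just makes explicit the bookkeeping (the vacuity of the $c=0$ case via $f_n(0,b)=1$, and the transfer of quadraticity across the isomorphism of Proposition \ref{isomorphisms of ttps}) that the paper leaves implicit.
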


To end this section we will classify the quadratic graded twisted tensor products of $A=\k[x]$ and $B=\k[z]$ up to: (1) isomorphism of twisted tensor products of $A$ and $B$, and (2) isomorphism of graded algebras. 


\subsection{Classification up to isomorphism of graded twisted tensor products}

Recall that $$C(a, b, c) = \k \la x, z \ra/\la zx-ax^2-bxz-cz^2 \ra.$$ By Corollary \ref{abc}, the algebra $C(a, b, c)$ is a twisted tensor product of $A$ and $B$ with respect to the obvious inclusions of $A$ and $B$ if and only if $f_n(ac,b)\neq 0$ for all $n\ge 0$. In the classification that follows, we assume that $f_n(ac,b)\neq 0$ for all $n\ge 0$. Above we showed that if $c\ne 0$, then $C(a,b,c)\cong C(ac,b,1)$ as graded twisted tensor products. Similarly, if $c=0\neq a$, the automorphism $x\mapsto x/a$ induces an isomorphism $C(a,b,0)\cong C(1,b,0)$. Otherwise, as twisted tensor products of $A$ and $B$, the $C(a, b, c)$ are completely rigid in the following sense.

\begin{prop}
\label{two dim as ttp}
If $C(a, b,1) \cong C(a', b',1)$ as twisted tensor products of $A$ and $B$, then $a = a'$ and $b = b'$. If $C(1,b,0)\cong C(1,b',0)$ or $C(0,b,0)\cong C(0,b',0)$ as graded twisted tensor products of $A$ and $B$, then $b=b'$.
\end{prop}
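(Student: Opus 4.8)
The plan is to use Proposition \ref{isomorphisms of ttps} together with the uniqueness statement in Proposition \ref{ttp yields twisting map} to reduce everything to a bookkeeping computation on twisting maps. The key point is that the only graded algebra automorphisms of $A = \k[x]$ are $x \mapsto \lambda x$ with $\lambda \in \k^*$, and similarly the only ones of $B = \k[z]$ are $z \mapsto \mu z$ with $\mu \in \k^*$. So if $\T = C(a,b,1)$ and $\T' = C(a',b',1)$ are isomorphic as twisted tensor products of $A$ and $B$, there exist $\lambda, \mu \in \k^*$ and a graded algebra isomorphism $\gamma$ intertwining the inclusions, and by Proposition \ref{ttp yields twisting map} (uniqueness of the twisting map) this forces the twisting map $\t'$ of $C(a',b',1)$ to equal $(\a \tsr \b)\t(\b^{-1}\tsr\a^{-1})$, where $\a(x) = \lambda x$, $\b(z) = \mu z$, and $\t$ is the twisting map of $C(a,b,1)$.

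First I would write out $\t(z\tsr x) = a x^2 \tsr 1 + b x\tsr z + 1\tsr z^2$ and compute $\t'(z\tsr x) = (\a\tsr\b)\,\t\,(\b^{-1}\tsr\a^{-1})(z\tsr x) = (\a\tsr\b)\t(\mu^{-1}\lambda^{-1}\, z\tsr x) = \mu^{-1}\lambda^{-1}(\a\tsr\b)(ax^2\tsr 1 + bx\tsr z + 1\tsr z^2)$. Expanding, this is $\mu^{-1}\lambda^{-1}(a\lambda^2 x^2\tsr 1 + b\lambda\mu\, x\tsr z + \mu^2\, 1\tsr z^2) = a\lambda\mu^{-1} x^2\tsr 1 + b\, x\tsr z + \mu\lambda^{-1}\, 1\tsr z^2$. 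Since $\t'$ is the twisting map of $C(a',b',1)$, comparing coefficients gives $a' = a\lambda\mu^{-1}$, $b' = b$, and $1 = \mu\lambda^{-1}$. The last equation forces $\lambda = \mu$, whence $a' = a$ and $b' = b$, as claimed. The argument for the $c = 0$ cases is the same: for $C(1,b,0)$ the twisting map is $\t(z\tsr x) = x^2\tsr 1 + b x\tsr z$, and the computation yields $\t'(z\tsr x) = \lambda\mu^{-1} x^2\tsr 1 + b\, x\tsr z$, so an isomorphism of twisted tensor products requires $\lambda\mu^{-1} = 1$ and $b' = b$; hence $b = b'$. The case $C(0,b,0)$, with $\t(z\tsr x) = b x\tsr z$, immediately gives $b' = b$ regardless of $\lambda, \mu$.

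I do not anticipate a serious obstacle here; the only thing requiring care is the appeal to Proposition \ref{ttp yields twisting map} to guarantee that an isomorphism of twisted tensor products really does correspond to a twisting map transformed exactly as in Proposition \ref{isomorphisms of ttps} — i.e. that the automorphisms $\a$, $\b$ witnessing the isomorphism act on the twisting map by the stated conjugation formula. This follows because $C(a,b,c)$ is presented as $A\tsr_\t B$ with the canonical inclusions, so its associated twisting map is exactly $\t$, and an isomorphism $\T \to \T'$ of twisted tensor products is by definition a commuting triple $(\a,\b,\g)$; chasing the definition of $\m_\t$ through $\g$ shows $\t' = (\a\tsr\b)\t(\b^{-1}\tsr\a^{-1})$. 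Once that is in place, the proof is just the coefficient comparison above.
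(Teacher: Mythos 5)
Your proposal is correct and follows essentially the same route as the paper: the paper observes that an isomorphism of twisted tensor products must send $x\mapsto \a x$ and $z\mapsto \b z$ and then compares the defining relations, which is exactly the coefficient comparison you carry out on the level of the twisting map (the relation $zx-ax^2-bxz-cz^2$ encodes $\t(z\tsr x)$, so conjugating $\t$ by $\a\tsr\b$ is the same computation). Your extra care in justifying that the isomorphism acts on $\t$ by the conjugation formula of Proposition \ref{isomorphisms of ttps} is a sound, slightly more explicit version of what the paper leaves implicit.
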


\begin{proof}
By definition, any isomorphism of twisted tensor products $\g: C(a, b, 1) \to C(a', b', 1)$ must commute with the inclusions. It follows that $\g(x) = \a x$ and $\g(z) = \b z$ for some $\a, \b \in \k^*$. Considering the defining relations of $C(a, b, 1)$ and $C(a', b', 1)$ makes it evident that $a = a'$ and $b = b'$. An analogous argument holds in the $c=0$ case.
\end{proof}

\subsection{Classification up to isomorphism of graded algebras}
By Proposition \ref{two dim as ttp}, it suffices to classify the twisted tensor products: $C(a,b,1)$, $C(1,b,0)$, and $C(0,b,0)$ up to isomorphism.

Let $R = \k \la x, z \ra/ \la f \ra$, where $0 \ne f$ is homogeneous of degree 2. It is well known  (see, for example, \cite[Exercise 2.4.3 (2)]{Rogalski}) that $R$ is isomorphic (as a graded algebra) to exactly one of: 
\begin{itemize}
\item[(i)] $\k_q[x, z] = \k \la x, z \ra/ \la zx-qxz \ra$, for some $q \in \k$; 
\item[(ii)] $J(x, z) = \k \la x, z \ra/\la zx-xz-z^2\ra$, (the {\it Jordan plane}); 
\item[(iii)] $\k \la x, z \ra/\la x^2 \ra$. 
\end{itemize}
We handle the $c=0$ case first.

\begin{prop}\ The following graded algebra isomorphisms hold.
\begin{enumerate}
\item $C(1,b,0)\cong \k_b[x,z]$ if $b\neq 1$;
\item $C(0,b,0)=\k_b[x,z]$ if $b\neq 0$; 
\item $C(1,1,0)= J(x,z)$;
\item $C(1,0,0)\cong C(0,0,0)=\k \la x, z \ra/\la zx \ra$.
\end{enumerate}
\end{prop}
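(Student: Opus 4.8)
The statement to prove is a list of explicit graded algebra isomorphisms among the algebras $C(a,b,0)=\k\la x,z\ra/\la zx-ax^2-bxz\ra$ and the standard quadratic algebras $\k_q[x,z]$, the Jordan plane $J(x,z)$, and $\k\la x,z\ra/\la zx\ra$.

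\medskip

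My plan is to treat each of the four items by exhibiting an explicit graded change of variables (a linear automorphism of the degree-1 space $\k x\oplus\k z$) that carries the single defining relation of $C(a,b,0)$ to the claimed normal form, then invoke the uniqueness in the classification list (i)--(iii) above when an ``$\cong$'' rather than ``$=$'' is being asserted.

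\medskip

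For (1), start from the relation $zx-x^2-bxz=0$ in $C(1,b,0)$. Rewrite it as $zx-xz=x^2+(b-1)xz=x(x+(b-1)z)$. The natural move is to substitute a new variable $w$ for $x+(b-1)z$ (keeping $z$), i.e. the linear automorphism $x\mapsto x$, $z\mapsto$ (something) chosen so that the relation becomes $zw-qwz$ for a suitable $q$; one checks directly that after the substitution $x'=x,\ z'=z$ with $x = x' $ and replacing $x$ by $x-(b-1)\cdot(\text{a multiple of }z')$ — concretely the change of variables sending the relation $zx-x^2-bxz$ to $z'x'-bx'z'$ — we land in $\k_b[x,z]$. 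I would present the substitution and simply verify the relation transforms correctly; since $b\neq 1$ is exactly the condition ensuring the transformed relation is not degenerate (the $x^2$ term is genuinely absorbed), this gives $C(1,b,0)\cong\k_b[x,z]$. For (2), the relation $zx-bxz$ of $C(0,b,0)$ is \emph{literally} the defining relation of $\k_b[x,z]$, so the isomorphism is an equality and there is nothing to do beyond pointing this out; the hypothesis $b\neq 0$ guarantees we are in case (i) and not case (iii). For (3), in $C(1,1,0)$ the relation is $zx-x^2-xz=0$, i.e. $zx-xz=x^2$; swapping the roles of $x$ and $z$ (the automorphism $x\leftrightarrow z$) turns this into $xz-zx=z^2$, i.e. $zx-xz-z^2=0$, which is exactly the Jordan plane relation, so again an equality after an obvious relabeling. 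For (4), $C(1,0,0)$ has relation $zx-x^2=0=x(x-z)$: substituting $z'=x-z$ (with $x'=x$) turns the relation into $x'z'=0$, giving $\cong\k\la x,z\ra/\la zx\ra$ after possibly swapping factors; and $C(0,0,0)$ has relation $zx=0$ on the nose, so that one is an equality.

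\medskip

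The only real thing to be careful about — and what I'd call the main (mild) obstacle — is bookkeeping the direction of each linear substitution and making sure it is invertible as a graded automorphism, and matching the ``$=$'' versus ``$\cong$'' assertions in the statement precisely (the paper is deliberately distinguishing literal equality of presentations from isomorphism). In particular for (1) and (4) one must confirm the chosen substitution is genuinely a change of basis of $\k x\oplus\k z$ (it is, since the matrices are triangular with nonzero diagonal), and for (1) one must use $b\neq 1$ so that the resulting algebra is $\k_b[x,z]$ in the list rather than something needing further reduction. No deeper input is needed; the uniqueness part of the classification (i)--(iii), already cited from \cite{Rogalski}, is what lets us conclude these are the isomorphism types, though here we don't even need it since we produce the normal forms by hand.
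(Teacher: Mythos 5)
Your overall strategy --- exhibit an explicit invertible graded linear change of variables carrying the single defining relation to the normal form --- is exactly the paper's (its entire proof reads: the first isomorphism is obtained by $x\mapsto(1-b)x$, $z\mapsto x+z$; the rest are obvious). The difficulty is that for item (1), the only item requiring actual work, you never pin down a substitution: you describe three candidate moves (``substitute $w$ for $x+(b-1)z$ keeping $z$'', then ``$x\mapsto x$, $z\mapsto$ (something)'', then ``replace $x$ by $x-(b-1)\cdot$(a multiple of $z'$)'') that are mutually inconsistent, and the last of these cannot work: if you perturb $x$ by a multiple of $z$, say $x\mapsto x+\beta z$, the image of $zx-x^2-bxz$ retains the term $-x^2$ with coefficient $-1$ for every $\beta$, so it is never proportional to $zx-bxz$. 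The substitution must perturb $z$ by a multiple of $x$ (and rescale): with $x\mapsto(1-b)x$, $z\mapsto x+z$ one computes $zx-x^2-bxz\mapsto(1-b)(zx-bxz)$, and the linear map is invertible precisely because $b\neq 1$. Your concluding phrase ``concretely the change of variables sending the relation $zx-x^2-bxz$ to $z'x'-bx'z'$'' simply asserts that such a map exists, which is the very thing to be verified; as written, (1) is not proved.

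Two smaller slips, neither fatal: in (3), swapping $x\leftrightarrow z$ in $zx-x^2-xz$ yields $xz-z^2-zx$, i.e.\ $zx-xz+z^2=0$, not $zx-xz-z^2=0$; one needs the further rescaling $z\mapsto -z$ to reach the Jordan relation (outside characteristic $2$). In (4) the correct factorization is $zx-x^2=(z-x)x$, not $x(x-z)$; the substitution $z'=x-z$ then gives $z'x'=0$ directly, so no ``swapping of factors'' is needed. The remaining items (2) and the $C(0,0,0)$ half of (4) are literal equalities of presentations, as you say.
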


\begin{proof}
The first isomorphism can be obtained by $x\mapsto (1-b)x$ and $z\mapsto x+z$. The rest are obvious.
\end{proof}

To determine the graded algebra isomorphism type of $C(a, b,1)$ we use the following result.

\begin{lemma}\cite[Exercise 2.4.3]{Rogalski}
\label{rogalski lemma}
Let $R = \k \la x, z \ra/\la f \ra,$ where $0 \ne f$ is homogeneous of degree 2. Write $f = x \phi(x) + z \phi(z),$ for some unique linear transformation $\phi: \k x \oplus \k z \to  \k x \oplus \k z$. If $\phi(x) = m_{11} x + m_{12}z$ and $\phi(z) = m_{21} x + m_{22} z$, define a matrix $M(\phi) = (m_{ij})$. Then $R(\phi) \cong R(\phi')$ as graded algebras if and only if there exists an invertible matrix $N$ such that $M' = N^t M N$.
\end{lemma}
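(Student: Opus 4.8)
The plan is to reduce this classification to a congruence problem for $2\times 2$ matrices by passing to the degree-one component. Write $V=\k x\oplus\k z$ and regard $f$ as an element of $V\tsr V$, the degree-two part of the tensor algebra $T(V)$. Unwinding $f=x\phi(x)+z\phi(z)$ shows that the coordinates of $f$ relative to the ordered basis $x\tsr x,\ x\tsr z,\ z\tsr x,\ z\tsr z$ of $V\tsr V$ are precisely the entries of $M(\phi)$; in particular $M(\phi)\ne 0$ since $f\ne 0$, and likewise $M(\phi')\ne 0$.

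First I would show: $R(\phi)\cong R(\phi')$ as graded algebras if and only if there exist $N\in\GL(V)$ and $\lambda\in\k^*$ with $\tilde N(f)=\lambda f'$, where $\tilde N$ is the algebra automorphism of $T(V)$ extending $N$. For the forward implication, let $\pi\colon T(V)\to R(\phi)$ and $\pi'\colon T(V)\to R(\phi')$ be the canonical surjections. A graded algebra isomorphism $\g\colon R(\phi)\to R(\phi')$ restricts on degree-one parts to a linear isomorphism $N\in\GL(V)$ (here $R(\phi)_1=V=R(\phi')_1$, since the defining ideals are generated in degree $2$). The algebra homomorphisms $\g\pi$ and $\pi'\tilde N$ from $T(V)$ to $R(\phi')$ agree on $x$ and $z$, hence are equal; since $\g$ is injective, $\ker(\g\pi)=\ker\pi=\la f\ra$, while $\ker(\pi'\tilde N)=\tilde N^{-1}\la f'\ra$, so $\tilde N\la f\ra=\la f'\ra$. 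Restricting to degree two---where these ideals equal $\k f$ and $\k f'$---gives $\tilde N(f)=\lambda f'$ for some $\lambda\in\k^*$. Conversely, any $N\in\GL(V)$ with $\tilde N(f)\in\k^* f'$ forces $\tilde N$ to descend to a graded algebra isomorphism $R(\phi)\to R(\phi')$.

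Next I would carry out the one genuinely computational step: if $N$ has matrix $P$ in the basis $(x,z)$, then a short index computation shows the coordinate matrix of $\tilde N(f)$ is $P\,M(\phi)\,P^{t}$. Hence the criterion becomes the existence of $P\in\GL_2(\k)$ and $\lambda\in\k^*$ with $P\,M(\phi)\,P^{t}=\lambda M(\phi')$. Finally I would absorb the scalar, using that $\k$ is algebraically closed: given such $P$ and $\lambda$, choose $\mu\in\k^*$ with $\mu^2=\lambda^{-1}$ and set $N=\mu P^{t}$, so that $N^{t}M(\phi)N=\mu^2\,P\,M(\phi)\,P^{t}=M(\phi')$; conversely, $M(\phi')=N^{t}M(\phi)N$ is the special case $P=N^{t}$, $\lambda=1$ of the preceding condition. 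Chaining these equivalences yields exactly the assertion of the lemma.

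I do not expect a serious obstacle. The points that need care are keeping the transpose conventions consistent in the matrix computation---one can just as easily land on $P^{t}M(\phi)P$, which is harmless since $N$ ranges over all of $\GL_2(\k)$, but a convention must be fixed and maintained---and the use of algebraic closedness when clearing the scalar $\lambda$, without which one would only recover congruence of $M(\phi)$ and $M(\phi')$ up to a nonzero scalar.
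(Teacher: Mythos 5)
Your proof is correct. Note that the paper does not actually prove this lemma; it is quoted verbatim from Rogalski's notes (Exercise 2.4.3) and used as a black box, so there is no in-paper argument to compare against. Your argument is the standard one: identify $f$ with the bilinear form $M(\phi)$ on $V=\k x\oplus\k z$, observe that a graded isomorphism is induced by an element of $\GL(V)$ carrying $\k f$ to $\k f'$ (since the ideals are principal and generated in degree $2$), compute that the induced action on coordinate matrices is congruence $P M P^{t}$, and absorb the projective scalar using a square root, which is where algebraic closedness of $\k$ enters. All steps check out, including the transpose bookkeeping, and you correctly flag the two places where care is needed.
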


\begin{thm}
\label{two dim up to isom}
Suppose that $C(a, b,1)$ is a quadratic twisted tensor product of $A$ and $B$. 
Then, as graded algebras,
\begin{itemize}
\item[(1)] $C(a, -1,1) \cong \k_{-1}[x,z]$, if ${\rm char}\ \k\neq 2$;
\item[(2)] if $4a-(b-1)^2 = 0$, then $C(a, b,1) \cong J(x, z)$;
\item[(3)] if $b \ne -1$ and $4a-(b-1)^2 \ne 0$, then $C(a, b,1) \cong \k_q[x,z]$, where $q$ satisfies $(a+b)q^2+(2a-b^2-1)q + a + b = 0$.
\end{itemize}
\end{thm}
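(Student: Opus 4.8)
The plan is to apply Lemma~\ref{rogalski lemma} directly. Writing $f = zx - ax^2 - bxz - z^2$ in the form $f = x\phi(x) + z\phi(z)$ requires distributing the mixed term $zx$; since $f = x(-ax - bz) + z(x - z)$ is one valid choice (the decomposition is not literally unique as written, but any choice of splitting $zx$ leads to matrices differing by terms that do not affect the congruence class, so I will fix this one), we get
\[
M = \begin{pmatrix} -a & -b \\ 1 & -1 \end{pmatrix}.
\]
Two quadratic algebras $R(\phi) \cong R(\phi')$ iff the matrices are congruent, $M' = N^t M N$ for some invertible $N$. So the three cases correspond to the congruence classes of $M$, which I will distinguish using the standard invariants: the symmetric part $S = \tfrac12(M + M^t)$ controls things in characteristic $\neq 2$, but more robustly one looks at whether $M$ is symmetric, skew, or neither, together with $\det M$ and $\operatorname{tr} M$-type data. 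Here $\det M = a + b$ and the "discriminant" governing the eigenvalue structure of the associated pencil is $(b-1)^2 - 4a$ (the discriminant of $\lambda^2 - (\operatorname{tr} M)\lambda + \det M = \lambda^2 + (1+a)\lambda + (a+b)$, up to the substitution reconciling $\operatorname{tr}$ and the quantity $2a - b^2 - 1$; I'll need to track this carefully).

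For part~(1): when $b = -1$ and $\operatorname{char}\k \neq 2$, I claim $M$ is congruent to $\left(\begin{smallmatrix} 0 & 1 \\ -1 & 0\end{smallmatrix}\right)$, which presents $\k_{-1}[x,z]$. Indeed with $b=-1$, $M = \left(\begin{smallmatrix} -a & 1 \\ 1 & -1 \end{smallmatrix}\right)$ is symmetric; wait — that would give a commutative algebra, not $\k_{-1}$. So I must instead decompose $zx$ asymmetrically or, better, recall that the relevant data is the \emph{antisymmetrization}: write $M = S + K$ with $S$ symmetric, $K$ skew; the algebra is "$\k_{-1}$-like" exactly when the skew part is nonzero and dominates appropriately. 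For $b = -1$ the skew part of any valid $M$ is $\tfrac12\left(\begin{smallmatrix} 0 & -b-1 \\ b+1 & 0\end{smallmatrix}\right) = 0$, so this is the degenerate/commutative regime — consistent with part~(1) giving $\k_{-1}[x,z]$, which \emph{is} commutative only when... no. I clearly need to be careful: the honest approach is to just diagonalize/normalize $M$ by explicit row-column operations $M \mapsto N^t M N$, bringing it to one of the three canonical forms $\left(\begin{smallmatrix} 1 & 0 \\ 0 & q\end{smallmatrix}\right)$ (giving $\k_{?}$ after scaling — actually $\operatorname{diag}$ gives commutative, so the canonical forms are rather $\left(\begin{smallmatrix} 0 & 1 \\ -q & 0\end{smallmatrix}\right) \leadsto \k_q[x,z]$, $\left(\begin{smallmatrix} 1 & 1 \\ 0 & 1\end{smallmatrix}\right) \leadsto J(x,z)$, and $\left(\begin{smallmatrix} 1 & 0 \\ 0 & 0\end{smallmatrix}\right) \leadsto \k\langle x,z\rangle/\langle x^2\rangle$), and then reading off which canonical form is reached in terms of $a,b$.

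So the concrete steps are: (i) fix the decomposition $f = x\phi(x) + z\phi(z)$ and record $M = M(a,b)$; (ii) recall/establish the dictionary between the three canonical congruence types of $2\times 2$ matrices and the three algebra types in Lemma~\ref{rogalski lemma}, paying attention to how $\k_q$ is realized (e.g.\ $M = \left(\begin{smallmatrix} 0 & 1 \\ -q & 0 \end{smallmatrix}\right)$, or equivalently any $M$ whose skew part is nonzero, with $q$ recovered from the congruence invariants); (iii) compute the invariants of $M(a,b)$: its skew part is $\tfrac{1+b}{2}\left(\begin{smallmatrix} 0 & -1 \\ 1 & 0\end{smallmatrix}\right)$ in my chosen decomposition, so it vanishes iff $b = -1$, and separately $\det M = a+b$, $\operatorname{tr} M = -(1+a)$; (iv) in the regime $b \neq -1$, the skew part is nonzero so we land in the $\k_q$ family, and $q$ is pinned down by the ratio $\det(M)/$(something) — I expect the quadratic $(a+b)q^2 + (2a - b^2 - 1)q + (a+b) = 0$ to emerge precisely as the condition that $M$ and $\left(\begin{smallmatrix} 0 & 1 \\ -q & 0\end{smallmatrix}\right)$ be congruent, i.e.\ as the resultant condition matching the conjugacy-type invariant (note the symmetry $q \leftrightarrow q^{-1}$, since $\k_q \cong \k_{q^{-1}}$, is visible in the palindromic coefficients $(a+b, \ldots, a+b)$ — a good sanity check); the two roots $q, q^{-1}$ then coincide exactly when the discriminant $(2a - b^2 - 1)^2 - 4(a+b)^2 = 0$, which should factor through $(b-1)^2 - 4a$; (v) finally, in the exceptional regime $b = -1$ (char $\neq 2$) the matrix is symmetric with $\det = a - 1 \neq 0$ (nonzero because $C(a,-1,1)$ quadratic forces $a \neq 1$ by Lemma~\ref{quad hilb series}/Theorem~\ref{quadratic ttps of polynomial rings of one variable}), hence congruent to the identity, and one checks $\k\langle x,z\rangle/\langle xz + zx\rangle = \k_{-1}[x,z]$ is the corresponding algebra; and when $(b-1)^2 - 4a = 0$ the matrix is congruent to a single Jordan block, giving $J(x,z)$.

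The main obstacle I anticipate is \textbf{bookkeeping the dictionary in step (ii) and matching it against possibly different sign/decomposition conventions}, compounded by the need to handle characteristic $2$ correctly: the classification of $2\times 2$ matrices up to congruence $M \mapsto N^t M N$ is genuinely more delicate in characteristic $2$ (where symmetric and alternating forms diverge), which is exactly why part~(1) carries the hypothesis $\operatorname{char}\k \neq 2$ while parts~(2) and~(3) do not. I will need to verify that the computations in steps (iv)–(v) producing the $\k_q$ family and the Jordan degeneration go through uniformly in all characteristics except where noted, most likely by exhibiting the normalizing matrix $N$ explicitly as a polynomial expression in $a, b$ (and $q$) and checking it is invertible precisely under the stated hypotheses — using $a \neq 1$ (from quadraticity when $b = -1$) and $a \neq 0$ or the relevant $f_n(a,b) \neq 0$ conditions wherever a denominator threatens to vanish. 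The palindromic structure of the quadratic in $q$ and the factorization of its discriminant as (a scalar times) $(b-1)^2 - 4a$ are the two computations I'd want to nail down first, as they tie parts~(2) and~(3) together.
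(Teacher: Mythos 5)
Your approach is the paper's approach: apply Lemma \ref{rogalski lemma} to $M_C=\left(\begin{smallmatrix}-a&-b\\1&-1\end{smallmatrix}\right)$ and decide congruence with $M_{-1}$, $M_J$, $M_q$. Your matrix is correct, and your anticipated computations do pan out: the skew part of $M_C$ is $\frac{1+b}{2}\left(\begin{smallmatrix}0&-1\\1&0\end{smallmatrix}\right)$, the congruence invariant $\det(\text{sym part})/\det(\text{skew part})$ equals $\bigl(4a-(1-b)^2\bigr)/(1+b)^2$ for $M_C$ and $-(1-q)^2/(1+q)^2$ for $M_q$, and equating these is exactly the palindromic quadratic $(a+b)q^2+(2a-b^2-1)q+(a+b)=0$, whose discriminant factors as $-(b+1)^2\bigl(4a-(b-1)^2\bigr)$ as you predicted. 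One small correction: the decomposition $f=x\phi(x)+z\phi(z)$ \emph{is} literally unique, since $xz$ and $zx$ are distinct monomials of the free algebra and $zx$ can only arise from $z\phi(z)$; there is no choice to be made in "splitting" the mixed term.

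The gap is that, as written, this is a roadmap rather than a proof: the decisive steps of parts (2) and (3) are only announced ("I expect the quadratic \dots to emerge", "computations I'd want to nail down first"). In particular, matching the invariant $\det S/\det K$ gives only a \emph{necessary} condition for congruence; to conclude $C(a,b,1)\cong\k_q[x,z]$ you must also prove sufficiency (for $2\times 2$ pairs over $\bar\k$ with nonzero skew part this holds, but it requires an argument about which determinants are achieved by $N$ with $N^tSN=S'$), and the degenerate case $\det S=0$ leading to $J(x,z)$ is asserted without verification. The paper closes exactly this gap by exhibiting, in each of the three cases, an explicit invertible $N$ (with entries rational in $a,b,q$ and determinant $\sqrt{1-a}$, $1+\sqrt a$, or $(1+b)/(1+q)$ respectively, nonzero by the quadraticity hypotheses) and checking $N^tM_\bullet N=M_C$ directly; your proposal would be complete once you either produce such matrices or carry out the sufficiency half of the invariant argument, including the characteristic-$2$ caveat you correctly flag for part (1).
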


Note that if ${\rm char}\ \k\neq 2$ and $b=-1$ then $4a-(b-1)^2=0$ implies $a=1$. This violates  Theorem \ref{quadratic ttps of polynomial rings of one variable}, so there is no conflict between statements (1) and (2).

\begin{proof}
First, note that by Theorem \ref{quadratic ttps of polynomial rings of one variable}, $f_1(a,b)\neq 0$ so $a\neq 1$.
To prove the theorem, we simply write down an invertible matrix $N$ and check the condition of Lemma \ref{rogalski lemma}. 

Using the notation of Lemma \ref{rogalski lemma}, for $\k_q[x,z]$, $J(x,z)$, and $C(a,b,1)$ we have $$M_q = \begin{bmatrix} 0 & -q \\ 1 & 0 \end{bmatrix},\qquad M_J = \begin{bmatrix} 0 & -1 \\ 1 & -1 \end{bmatrix},\qquad\text{ and }\qquad M_C = \begin{bmatrix} -a & -b \\1 & -1 \end{bmatrix}$$ respectively.

To prove (1), let $$N = \begin{bmatrix} \frac{1+\sqrt{1-a}}{2} & -\frac{1}{2} \\ -1+\sqrt{1-a} & 1 \end{bmatrix}.$$ The determinant of $N$ is $\sqrt{1-a}$, which is nonzero since $a \ne 1$, so $N$ is invertible. It is straightforward to check that $N^t M_{-1} N = M_C$.

For (2), assume that $4a-(b-1)^2 = 0$.  Let $\sqrt{a}$ be chosen such that $2\sqrt{a}=b-1$ and put $$N = \begin{bmatrix} 1+\sqrt{a} & 0 \\ \sqrt{a} & 1\end{bmatrix}.$$ Clearly $N$ is invertible and one checks that $N^t M_J N = M_C$.

Finally, for (3) assume that $b \ne -1$ and $4a-(b-1)^2 \ne 0$. Suppose that $q \in \k$ satisfies $(a+b)q^2+(2a-b^2-1)q + a + b = 0$. We claim that $q \in \k - \{1, -1\}$. To see this suppose $q \in \{1, -1\}$. If $q = 1$, then $4a-(b-1)^2 = 0$; if $q = -1$, then $b = -1$. We conclude that $q \in \k - \{1, -1\}$. Define $$N = \begin{bmatrix} \frac{bq-1}{q^2-1} & \frac{1}{q-1} \\ \frac{b-q}{q+1} & 1 \end{bmatrix}.$$ Then $\det N = (1+b)/(1+q) \ne 0$, so $N$ is invertible and one checks that $N^t M_q N = M_C$.

\end{proof}

\section{Quadratic twisted tensor products of $\k[x,y]$ and $\k[z]$}
\label{sec:QuadTTPs}

Throughout this section, and for the rest of the paper, we fix the notation $R = \k[x, y]$ and $S = \k[z]$. For readibility, henceforth, when we write $T$ is a (quadratic) graded twisted tensor product, we mean $T$ is a (quadratic) graded twisted tensor product of $R$ and $S$. Also, if $T$ and $T'$ are graded twisted tensor products and we write $T$ is isomorphic to $T'$, then we mean that $T$ and $T'$ are isomorphic as graded twisted tensor products of $R$ and $S$. The main result of this section is the determination of all of the isomorphism classes of the quadratic twisted tensor products of $R$ and $S$. Let $a, b, c, d, e, f, A, B, C, D, E, F \in \k$ and define elements of the tensor algebra $\k \la x, y, z \ra$ (suppressing tensors)
\begin{align*}
\t(zx) &= ax^2+bxy+cy^2+dxz+eyz+fz^2, \\
\t(zy) &= Ax^2+Bxy+Cy^2+Dxz+Eyz+Fz^2.
\end{align*}

The problem under consideration is to determine when the algebra $$T = \k \la x, y, z \ra/ \la zx-\t(zx), zy-\t(zy), xy-yx \ra$$ is a graded twisted tensor product of $R$ and $S$ relative to the obvious maps $i_R:R \to T$ and $i_S:S \to T$. To indicate the dependence of $T$ on the parameters, we will also use the notation 
$$T = T(a, b, c, d, e, f; A, B, C, D, E, F).$$
We begin with a reduction in the number of parameters.

\begin{lemma}
\label{JNFs}
If $T$ is a graded twisted tensor product, then $T$ is isomorphic to $T(a', b', c', d', e', f'; A', B', C', 0, E', 0)$ where $e', f'\in \{0,1\}$. Furthermore, if $e'=0$ then $A'\in\{0,1\}$, if $e'=A'=0$, then $C'\in\{0,1\},$ and if $e'=1$, then $d'=E'$.
\end{lemma}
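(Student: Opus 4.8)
The plan is to exploit Proposition~\ref{isomorphisms of ttps}: any pair of graded algebra automorphisms $\a \in \mathrm{Aut}(R)$ and $\b \in \mathrm{Aut}(S)$ produces an isomorphic twisted tensor product with twisting map $\t' = (\a\tsr\b)\t(\b^{-1}\tsr\a^{-1})$. Since $R = \k[x,y]$, the degree-$1$ part of $\a$ is an arbitrary element of $\mathrm{GL}_2(\k)$ acting on $\k x \oplus \k y$; since $S = \k[z]$, the automorphism $\b$ is just a rescaling $z \mapsto \l z$. So the strategy is to track how the six-by-two matrix of coefficients in $(\ref{tauDef})$ transforms under these automorphisms and to choose $\a,\b$ to put it in the claimed normal form. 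First I would separate the coefficient data into blocks: the ``$zz$-block'' $(f,F)$, the ``$xz,yz$-block'' $\begin{bmatrix} d & e \\ D & E\end{bmatrix}$, and the ``$xx,xy,yy$-block'' $\begin{bmatrix} a & b & c \\ A & B & C\end{bmatrix}$, and work out the action of $\a$ (viewed as $P\in\mathrm{GL}_2$) and $\b$ on each.

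Next I would carry out the reduction in stages. The key observation is that conjugating $\t$ by $\a$ amounts to the substitution $x,y \mapsto$ (linear combos) in the targets $\t(zx),\t(zy)$ \emph{and} a simultaneous change of which relations we call $zx$-relation and $zy$-relation; concretely the $(zx,zy)$ pair of outputs transforms by left multiplication by $P^{-1}$ (from re-indexing $z x, z y$ via $\a^{-1}$ on the source) together with the substitution $P$ on the polynomial variables $x,y$ appearing in the outputs, while $\b$ scales the $z$'s. The $xz,yz$-block $M := \begin{bmatrix} d & e \\ D & E\end{bmatrix}$ transforms by conjugation $M \mapsto P^{-1} M P$ (the $\b$-scaling cancels since both a $z$ in the output and the $z$ in the source appear). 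So I would first use $P$ to put $M$ into Jordan canonical form over the algebraically closed field $\k$ — this is exactly the content signalled by the lemma's name. The two Jordan types are $\mathrm{diag}(d,E)$ (including the equal-eigenvalue diagonal case) and a single Jordan block $\begin{bmatrix} d & 1 \\ 0 & d\end{bmatrix}$. In the diagonal case we get $e = D = 0$; in the Jordan block case we get $e = 1$, $D = 0$, $d = E$. In all cases $D = 0$, and we have arranged that either $e=0$, or $e=1$ and $d=E$ — matching two of the asserted conclusions. I then need to check what residual automorphism freedom survives: in the diagonal case with distinct $d\neq E$, only diagonal $P$ survive (plus the swap, when we don't mind permuting eigenvalues); in the diagonal case with $d = E$ all of $\mathrm{GL}_2$ survives; in the Jordan case, the centralizer of a regular Jordan block, i.e.\ $P = \alpha I + \beta N$.

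Then I would use the surviving freedom to normalize $f$ and $F$. The $z^2$-coefficients $(f,F)$ in $\t(zx),\t(zy)$ transform under $\b: z\mapsto \l z$ by $f \mapsto \l f$, $F \mapsto \l F$ (one factor of $\l$ net: two $z$'s in the output $z^2$, one $z$ in the source $zx$), and under $P$ the vector $(f,F)^t$ transforms by $P^{-1}$. So $(f,F)$ is a vector in $\k^2$ acted on by $\mathrm{GL}_2$-orbit data times scaling: generically I can kill $F$ and scale $f$ to $1$, or $(f,F) = 0$. Here I must be careful to use only the \emph{residual} freedom left after fixing $M$'s Jordan form; this is where the argument becomes case-heavy. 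In the $d=E$-diagonal case there is enough freedom to do whatever we want with $(f,F)$. In the $d\neq E$ case the surviving diagonal $P$ still lets me scale $f$ and $F$ independently (up to the swap), so I can reach $(f,F)\in\{(0,0),(1,0),(0,1)\}$, and then absorb the $(0,1)$ into $(1,0)$ by the swap if necessary — but the swap also swaps $d\leftrightarrow E$, so I'd need to check consistency; similarly in the Jordan case I'd see whether $P = \alpha I + \beta N$ suffices to reach $F = 0$. The upshot should be $F = 0$ and $f\in\{0,1\}$. Finally, once $e,f$ are pinned down, I'd handle the remaining assertions: if $e' = 0$ I claim $A'\in\{0,1\}$, and if moreover $A'=0$ then $C'\in\{0,1\}$. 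With $e = 0$ the map $\t$ is more constrained — I expect the twisting-map equation (associativity of $\m_\t$, Proposition~\ref{twisting maps and ttps}) to force relations among the $xx,xy,yy$-block entries, and/or the remaining automorphism freedom (scaling $x$, scaling $y$, maybe adding $y$ to $x$) lets me rescale $A$ to $0$ or $1$, then $C$ to $0$ or $1$; the case $e = A = 0$ with only $x,y$-scalings left is exactly enough to normalize the single surviving quadratic coefficient $C$. I would also confirm the last clause ``if $e'=1$ then $d'=E'$'' is precisely the Jordan-block constraint recorded above, so nothing new is needed there.

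The main obstacle I anticipate is bookkeeping the \emph{interaction} between the three normalization steps: each choice of $\a$ made to fix one block consumes automorphism freedom that the next step needs, so the argument must be organized as a careful case analysis on the Jordan type of $M = \begin{bmatrix} d & e\\ D& E\end{bmatrix}$, and within the distinct-eigenvalue case a sub-analysis on whether $(f,F)$ is zero, a $d$-eigenvector-ish vector, or generic. A secondary subtlety is that some of these normalizations are only legitimate \emph{after} one knows $\t$ is actually a twisting map (so that the twisting-map equations are available to constrain the otherwise-free parameters $A,B,C$); the hypothesis ``if $T$ is a graded twisted tensor product'' is exactly what licenses invoking Proposition~\ref{twisting maps and ttps}. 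I would therefore state at the outset that $\t$ satisfies the twisting-map identity and extract whatever linear/quadratic consequences are needed before committing to the final choices of $\a,\b$, rather than trying to normalize blindly.
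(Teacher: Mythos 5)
Your overall strategy --- apply Proposition \ref{isomorphisms of ttps} with $\a_P$ for $P\in\mathrm{GL}_2(\k)$ and with $z\mapsto\l z$, track how the three coefficient blocks transform, and normalize block by block --- is exactly the paper's strategy, and your transformation rules ($M\mapsto P^{-1}MP$ for the $xz,yz$-block, $(f,F)^t\mapsto \l\, P^{-1}(f,F)^t$ for the $z^2$-block) are correct. The problem is the order in which you spend the automorphism freedom, and it occurs precisely at the step you yourself flag as delicate. Once you have conjugated $M=\left[\begin{smallmatrix} d & e\\ D & E\end{smallmatrix}\right]$ into diagonal form with distinct eigenvalues, the residual automorphisms of $R$ are the diagonal torus (plus the swap), and these act on $(f,F)$ by independent scaling of the two coordinates. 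Independent scaling by \emph{nonzero} scalars can never annihilate a nonzero coordinate: if both coordinates of $(f,F)$ are nonzero in the eigenbasis, the only normal form you can reach is $(1,1)$, not $(1,0)$, $(0,1)$, or $(0,0)$. So your assertion that you ``can reach $(f,F)\in\{(0,0),(1,0),(0,1)\}$'' fails in the generic case, and your plan cannot conclude $F'=0$. The single-Jordan-block case has an analogous defect: the centralizer $\alpha I+\beta N$ only shears $f$ by multiples of $F$ and rescales $F$, so it cannot kill a nonzero $F$ either.

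The paper avoids this by reversing the two stages: it normalizes $(f,F)$ \emph{first}, while all of $\mathrm{GL}_2(\k)$ is still available --- in particular the additive shear $y\mapsto y+F'x$, which (after the swap $x\leftrightarrow y$ and a rescaling of $z$ arrange $f=1$) kills $F$ outright. Only afterwards does it pass to the Jordan form of $M$ (noting that this conjugation is to be done without disturbing the $z^2$-coefficients) and finish with rescalings of $y$ to put $A$, and then $C$, into $\{0,1\}$. The shear is the move your residual-freedom-after-JNF approach never gets to use, and it is the missing ingredient; the interaction between the two normalizations that you correctly identified as the crux is resolved by this choice of order, not by a finer case analysis within your order. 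A secondary remark: your suggestion that the twisting-map identities might be needed to constrain $A$, $B$, $C$ before normalizing is unnecessary --- in the paper $A$ and $C$ are normalized by the pure rescalings $y\mapsto A'y$ and $y\mapsto C'^{-1}y$, with no appeal to associativity of $\m_\t$; the hypothesis that $T$ is a twisted tensor product enters this lemma only through Proposition \ref{isomorphisms of ttps} itself.
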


\begin{proof}
We will apply Proposition \ref{isomorphisms of ttps} repeatedly to automorphisms of $R$ and $S$.

If $F=0\neq f$, applying Proposition \ref{isomorphisms of ttps} to the automorphism of $S$ given by $z \mapsto f^{-1} z$, we see that $T\cong T(a', b', c', d', e', 1; A', B', C', D', E', 0).$

If $F\neq 0$, then applying Proposition \ref{isomorphisms of ttps} to the automorphism of $R$ given by $x \leftrightarrow y$ and the automorphism $z \mapsto F^{-1} z$, we have
$$T\cong T(a', b', c', d', e', 1; A', B', C', D', E', F').$$
Then we use the automorphism of $R$ given by $x \mapsto x$, $y \mapsto y + F'x$ to see that, without loss of generality, $F' = 0$ and $f'=1$. 

We have proved that if $T$ is any twisted tensor product of $R$ and $S$, then 
$$T\cong T(a', b', c', d', e', f'; A', B', C', D', E', 0)$$
for $f'\in \{0,1\}$, so we assume henceforth that $F=0$ and $f\in \{0,1\}$.

The graded automorphism group of $R$ is ${\rm GL}_2(\k)$, where an invertible matrix $P=\begin{bmatrix} p_{11} & p_{12}\\ p_{21} & p_{22}\\ \end{bmatrix}$ determines a graded algebra automorphism $\a_P:R\to R$ by $\a_P(x) = p_{11}x+p_{12}y$ and $\a_P(y)=p_{21}x+p_{22}y$. Since $\k$ is algebraically closed we choose an invertible matrix $P$ such that $P^{-1} \begin{bmatrix} d & e \\D & E \end{bmatrix} P$ is in Jordan normal form. Applying Proposition \ref{isomorphisms of ttps} to the automorphism $\a_P$, we have
$$T\cong T(a', b', c', d', e', f; A', B', C', 0, E', 0)$$
where $e'=0$, or $e'=1$ and $d'=E'$. Note that this isomorphism does not affect the coefficients of $z^2$. 

If $e'=0$ and $A'\neq 0$, we may rescale $y\mapsto A'y$ to obtain
$$T\cong T(a'', b'', c'', d'', 0, f; 1, B'', C'', 0, E'', 0).$$
To complete the proof, if $e'=A'=0$, and $C'\neq 0$, we may rescale $y\mapsto C'^{-1}y$ to obtain
$$T\cong T(a'', b'', c'', d'', 0, f; 0, B'', 1, 0, E'', 0).$$
\end{proof}

We refer to the isomorphism class of Lemma \ref{JNFs} as the \emph{Jordan normal form} of the twisted tensor product $T$. 




Let us first dispense with the case $f=0$. Let $\s\in \End(R)$ be a graded ring endomorphism and let $\d$ be a graded left $\s$-derivation, meaning $\d(r_1r_2)=\s(r_1)\d(r_2)+\d(r_1)r_2$ for all $r_1, r_2\in R$. Then one may construct the \emph{graded (left) Ore extension} $R[z;\s,\d]$, which is freely generated as an $R$-algebra by $z$, subject to the relations $zr=\s(r)z+\d(r)$ for $r\in R$. 


\begin{prop}
\label{oneSidedOre}
The algebra $T$ is a graded twisted tensor product with $f=F=0$ if and only if $T$ is a graded Ore extension.
\end{prop}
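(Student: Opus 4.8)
The plan is to establish the two implications separately. For the easy direction, suppose $T = R[z;\sigma,\delta]$ is a graded Ore extension. Then the defining relations have the form $zx = \sigma(x)z + \delta(x)$ and $zy = \sigma(y)z + \delta(y)$, where $\sigma(x),\sigma(y)$ are linear in $x,y$ (since $\sigma$ is a graded endomorphism) and $\delta(x),\delta(y)$ are quadratic in $x,y$ (since $\delta$ has degree $1$). Comparing with $(\dagger)$, this forces $f = F = 0$. Moreover, the standard Poincar\'e--Birkhoff--Witt-type basis $\{x^i y^j z^k\}$ for a graded Ore extension (which holds because $\sigma$ is an endomorphism and $\delta$ a $\sigma$-derivation, so no new relations appear in higher degree) shows exactly that the canonical map $R \tsr S \to T$, $r\tsr s \mapsto i_R(r)i_S(s)$, is a linear isomorphism, so $T$ is a graded twisted tensor product with the obvious inclusions.

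For the converse, assume $T = T(a,b,c,d,e,0;A,B,C,D,E,0)$ is a graded twisted tensor product. The key observation is that $f = F = 0$ means $\t(z\tsr x)$ and $\t(z\tsr y)$ each lie in $(R_+\tsr S_{\le 1}) = (R_2 \tsr \k)\oplus(R_1\tsr S_1)$, i.e.\ no pure $z^2$ term appears. Define $\sigma: R_1 \to R_1$ by reading off the linear-in-$z$ parts: $\sigma(x) = dx + ey$, $\sigma(y) = Dx + Ey$, and define $\delta: R_1 \to R_2$ by $\delta(x) = ax^2+bxy+cy^2$, $\delta(y) = Ax^2+Bxy+Cy^2$. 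Then the relations $zx - \t(zx)$, $zy-\t(zy)$ can be rewritten as $zr = \sigma(r)z + \delta(r)$ for $r\in R_1$. The point is to show that $\sigma$ extends to a graded algebra endomorphism of $R=\k[x,y]$ and $\delta$ extends to a graded $\sigma$-derivation, compatibly; i.e.\ that $\sigma$ and $\delta$ respect the commutativity relation $xy = yx$, which is the only other relation of $T$. Equivalently, one checks that the element $zxy - zyx$, reduced to normal form using the $z$-relations, vanishes modulo $xy-yx$: reducing $z(xy)$ gives $\sigma(x)\sigma(y)z + (\text{quadratic in }z\text{-free terms})$ and similarly for $z(yx)$, and the constraint that $T$ has the correct Hilbert series (Hilbert series $(1-t)^{-3}$, which holds since $T$ is a twisted tensor product of $R$ and $S$) forces $\sigma(x)\sigma(y) = \sigma(y)\sigma(x)$ in $\k[x,y]$ and forces the derivation identity $\delta(xy) = \sigma(x)\delta(y) + \delta(x)y = \sigma(y)\delta(x) + \delta(y)x$ on the commutative polynomial ring. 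Since $R$ is commutative and free on $x,y$, any pair of linear forms automatically commutes, so $\sigma$ always extends to an endomorphism; the real content is that $\delta$ extends to a well-defined $\sigma$-derivation, which follows once the degree-$2$ relation $\delta(xy-yx)=0$ holds in the appropriate sense. Once $\sigma$ and $\delta$ are in hand, $T$ is by construction a quotient of $R[z;\sigma,\delta]$, and comparing Hilbert series (both have Hilbert series $(1-t)^{-3}$) shows $T \cong R[z;\sigma,\delta]$.

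Concretely, I would organize the converse as: (i) read off candidate $\sigma,\delta$ from the coefficients as above; (ii) observe there is a surjective graded algebra map $R[z;\sigma,\delta] \to T$; (iii) invoke the twisted-tensor-product hypothesis to get $\dim T_n = \binom{n+2}{2}$, which equals $\dim (R[z;\sigma,\delta])_n$, forcing the surjection to be an isomorphism; one still must check that $R[z;\sigma,\delta]$ genuinely is a graded Ore extension, i.e.\ that the relations $xy=yx$ and $zr = \sigma(r)z+\delta(r)$ are consistent --- but this is automatic from the general theory of Ore extensions once $\sigma$ is an endomorphism and $\delta$ a $\sigma$-derivation of $R$, and the $\sigma$-derivation property on the free commutative generators is exactly what the associativity/twisting-map axiom for $\t$ guarantees.

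The main obstacle I anticipate is step (ii)--(iii) of the converse: verifying that $\delta$ as defined really is a $\sigma$-derivation on all of $\k[x,y]$ --- this is not a formal consequence of the relations alone but uses that $\t$ is a \emph{twisting map}, i.e.\ the hexagon-type identity $\t(\mu_S\tsr\mu_R) = (\mu_R\tsr\mu_S)(1\tsr\t\tsr1)(\t\tsr\t)(1\tsr\t\tsr1)$, specialized to the element $z\tsr(xy)\in (S\tsr R)_3$, or equivalently the associativity of $\mu_\t$ applied to $x\cdot(z\cdot y)$ and $(x\cdot z)\cdot y$ type products together with the commutator relation. Extracting the $\sigma$-derivation identity cleanly from this --- rather than drowning in the six-term expansion --- is the delicate bookkeeping, but since $f=F=0$ collapses many terms (no $z^2$ appears), the computation is tractable and the derivation identity drops out of comparing the coefficient of $z$ on both sides.
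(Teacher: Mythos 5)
Your proposal is correct and follows essentially the same route as the paper: both directions hinge on reading off $\sigma$ and $\delta$ from the $z^2$-free twisting data, on the standard PBW basis of an Ore extension for the forward implication, and on the equivalence between the twisting-map axiom (i.e.\ associativity of $\mu_\tau$) and the endomorphism/$\sigma$-derivation identities for the converse. The only (cosmetic) difference is that the paper derives those identities for all $r_1,r_2\in R$ in one stroke from the hexagon identity, whereas you verify just the generator compatibility $\delta(xy)=\delta(yx)$ and let injectivity of $i_R$ (equivalently the Hilbert series of $T$) force it.
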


\begin{proof}
Suppose $\s$ is a graded ring endomorphism of $R$ and $\d$ is a graded left $\s$-derivation. From the relations of $R[z;\s,\d]$ it is clear that the set $\S = \{x^i y^j z^k : i, j, k  \geq 0\}$ is a $\k$-linear spanning set. Since $z$ is a free generator and $\{x^iy^j : i, j\ge 0\}$ is a $\k$-basis for $R$, it follows that $\S$ is a $\k$-basis for $R[z;\s,\d]$. Hence the $\k$-linear map $R\tsr S\to R[z;\s,\d]$ given by $r\tsr z^i\mapsto rz^i$ is an isomorphism, and $R[z;\s,\d]$ is a graded twisted tensor product. The graded twisting map associated to $R[z;\s,\d]$ is given by $\t(z\tsr r)=\s(r)\tsr z+\d(r) \tsr 1$, and it follows that $f=F=0$.

Now suppose $T$ is a graded twisted tensor product of $R$ and $S$ with $f=F=0$. Since $T$ is quadratic, the definitions of $\t(z\tsr x)$ and $\t(z\tsr y)$ uniquely determine a one-sided graded twisting map $\t$ for $T$ by Theorem \ref{quadratic iff uep}. Since $\t$ respects multiplication in $R$, we have $\t(z\tsr r)\in (R\tsr \k z) \oplus (R\tsr \k 1_S)$ for all $r\in R$. Define graded $\k$-linear maps $\s, \d:R\to R$ by $\t(z\tsr r) = \s(r)\tsr z + \d(r)\tsr 1_S$. Then for any $r_1, r_2\in R$, we have
\begin{align*}
\s(r_1r_2)\tsr z+\d(r_1r_2)\tsr 1_S&=\t(z\tsr r_1r_2)\\
&=  (\m_R\tsr 1)(1\tsr \t)((\s(r_1)\tsr z + \d(r_1)\tsr 1_S)\tsr r_2)\\
&=\s(r_1)\s(r_2)\tsr z + (\s(r_1)\d(r_2)+\d(r_1)r_2)\tsr 1_S.
\end{align*}
This shows $\s$ is a ring endomorphism and $\d$ is a (left) $\s$-derivation. By Theorem \ref{quadratic iff uep}(3), for all $r\in R$, the relations $zi_R(r)=\s(i_R(r))z+\d(i_R(r))$ hold in $T$. Since $T$ is a graded twisted tensor product, $\S = \{x^i y^j z^k : i, j, k  \geq 0\}$ is a $\k$-basis for $T$, thus $T$ is freely generated as an $i_R(R)$-algebra by $z$. Thus $T$ is a graded Ore extension.
\end{proof}

When $T$ is isomorphic to a graded twisted tensor product with $f=F=0$, we say $T$ is of \emph{Ore type}. 

\subsection{The Ore-type case}

Let $\s \in \End(R)$ be the ring endomorphism given by $\s(x) = dx + ey$ and $\s(y) = Dx+Ey$. Proposition \ref{oneSidedOre} shows that to determine which values of the parameters ensure $T$ is a graded twisted tensor product with $f=F=0$, it suffices to characterize when the formulas $$\d(x) = ax^2+bxy+cy^2, \ \ \ \d(y) = Ax^2+Bxy+Cy^2$$ determine a well-defined left $\s$-derivation $\d:R\to R$. In turn, the definitions in the last display determine a well-defined left $\s$-derivation $\d:R\to R$ if and only if $\d(xy)=\d(yx)$, or equivalently if $$\s(x)\d(y)+\d(x)y = \s(y)\d(x) + \d(y) x, $$ holds in the ring $R$.


\begin{thm}
\label{Ore extension classification}
Suppose $T$ is an Ore-type twisted tensor product in Jordan normal form. 
\begin{itemize}
\item[(1)] If $e=0$, then $A\in\{0,1\}$, and if $A=0$, then $C\in\{0,1\}$. Furthermore, one of the following mutually exclusive cases is true:
\begin{itemize}
\item[(i)] $d =E = 1$ and $a, b, c, B$ are arbitrary;
\item[(ii)] $d \ne 1$, $E = 1$, $A = B = C = 0$, and $a, b, c$ are arbitrary;
\item[(iii)] $d =1$, $E \ne 1$, $a = b = c = 0$, and $B$ is arbitrary;
\item[(iv)] $d \ne 1$, $E \ne 1$, $A = c = 0$, $a=B(d-1)/(E-1)$, $b=C(d-1)/(E-1)$, and $B$ is arbitrary.
\end{itemize}

\item[(2)] If $e=1$, then $d=E$, and one of the following mutually exclusive cases is true:

\begin{itemize}
\item[(i)] $d = 1$, $A = B = C = 0$, and $a, b, c$ are arbitrary;
\item[(ii)] $d \ne 1$, $A = 0$, $B=a=(b-C)(d-1)$, $c=C/(d-1)$, and $b$ is arbitrary.
\end{itemize}

\end{itemize}

\end{thm}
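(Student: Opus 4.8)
## Proof proposal

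The plan is to reduce the statement entirely to the single polynomial identity
\[
\s(x)\d(y) + \d(x)y = \s(y)\d(x) + \d(y)x
\]
in $R = \k[x,y]$, as explained in the paragraph preceding the theorem, and then to extract the stated case analysis by comparing coefficients of the monomials of degree $3$. Since $T$ is assumed to be in Jordan normal form, by Lemma \ref{JNFs} we already have $D = F = 0$, $f = 0$, $e \in \{0,1\}$, and $e = 1 \Rightarrow d = E$; moreover $e = 0 \Rightarrow A \in \{0,1\}$ and $e = A = 0 \Rightarrow C \in \{0,1\}$. So the content of the theorem beyond Lemma \ref{JNFs} is exactly which tuples of the remaining parameters make $\d$ a well-defined left $\s$-derivation, and this is governed by the displayed identity together with the constraint $D = F = 0$.

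First I would substitute $\s(x) = dx + ey$, $\s(y) = Dx + Ey$, $\d(x) = ax^2 + bxy + cy^2$, $\d(y) = Ax^2 + Bxy + Cy^2$ into the identity and expand both sides as elements of $R$, which is commutative. This yields a homogeneous cubic in $x,y$; setting the coefficients of $x^3$, $x^2y$, $xy^2$, $y^3$ all to zero gives four scalar equations in $a,b,c,A,B,C,d,e,E$ (recall $D=0$). For the case $e = 0$ (part (1)), these four equations simplify considerably; I would then split on whether $d = 1$ or $d \neq 1$ and whether $E = 1$ or $E \neq 1$, which is precisely the four subcases (i)--(iv). In each branch, solving the linear system for the remaining coefficients produces the stated relations — e.g.\ in subcase (iv), the $x^3$-equation forces $A = c = 0$ (using $A \in \{0,1\}$ and $d \neq 1$ to rule out $A=1$), and the $x^2y$ and $xy^2$ equations give $a(E-1) = B(d-1)$ and $b(E-1) = C(d-1)$. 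For $e = 1$ (part (2)), one uses $d = E$ from the Jordan normal form, substitutes, and again splits on $d = 1$ versus $d \neq 1$; the $d = 1$ branch collapses $\s$ to the identity and the identity forces $A = B = C = 0$, while the $d \neq 1$ branch yields the relations in (2)(ii) after solving the resulting linear equations. I would also need to confirm mutual exclusivity of the subcases, which is immediate from the dichotomies $d = 1$ vs.\ $d \neq 1$ and $E = 1$ vs.\ $E \neq 1$ used to define them.

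The one genuinely non-routine point is verifying that the conditions ``$A \in \{0,1\}$'' and ``$A = 0 \Rightarrow C \in \{0,1\}$'' in part (1) interact correctly with the derivation identity: a priori Lemma \ref{JNFs} only tells us we may \emph{choose} a representative with $A, C \in \{0,1\}$, so I must check that after imposing the cubic identity the forced value of $A$ (respectively $C$) is consistent with — and in subcases (ii), (iv) actually forced to be — the normalized value. Concretely, in the $d \neq 1$ subcases the $x^3$-coefficient equation reads $(d-1)A = 0$ or similar, so $A$ is forced to $0$, not merely normalizable; this is what makes subcases (ii) and (iv) sharp. Similarly the appearance of $c = 0$ in (1)(iv) and $A = 0$ throughout (2) comes from the top-degree coefficient equations, not from any normalization freedom. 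Beyond this bookkeeping, the proof is a finite, elementary computation: expand one polynomial identity, set four coefficients to zero, and solve the resulting linear systems branch by branch.
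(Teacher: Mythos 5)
Your proposal is correct and follows essentially the same route as the paper: the paper likewise reduces to equating coefficients in $\s(x)\d(y)+\d(x)y=\s(y)\d(x)+\d(y)x$ (yielding, for $e=0$, the system $A(d-1)=0$, $B(d-1)+a(1-E)=0$, $C(d-1)+b(1-E)=0$, $c(E-1)=0$), branches on whether $d$ or $E$ equals $1$, and leaves part (2) to the reader as a similar computation. The only quibble is attributional: in (1)(iv) it is the $y^3$-coefficient equation, not the $x^3$-coefficient equation, that forces $c=0$ — a point you in fact get right later when you refer to the ``top-degree coefficient equations.''
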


\begin{proof} 
Assume $T$ is an Ore-type twisted tensor product in Jordan normal form, so $f=F=D=0$ and $e\in\{0,1\}$.

For (1), suppose that $e = 0$. By Lemma \ref{JNFs}, we have $A\in\{0,1\}$ and if $A=0$ then $C\in\{0,1\}$.  Equating coefficients in $\s(x)\d(y)+\d(x)y = \s(y)\d(x) + \d(y) x$ yields the system of equations
\begin{align*}
A(d-1) &= 0 \\
B(d-1)+a(1-E) &= 0 \\
C(d-1)+b(1-E) &= 0 \\
c(E-1) &= 0.
\end{align*}
The result then follows by considering the possible normal forms and whether $d$ or $E$ equals 1.

The proof of (2) is similar, and is left to the reader.
\end{proof}

%
%
%
%

This completes the classification, up to isomorphism, of twisted tensor products of Ore type.

We now turn our attention to the case where $T$ is a quadratic twisted tensor product in Jordan normal form with $f=1$. As the associated twisting map is no longer one-sided, this case is more difficult. Surprisingly, as we will show, knowing $\dim T_3=10$ is sufficient for proving $T$ is a quadratic twisted tensor product. For this reason, the remainder of this section relies upon a Gr\"obner-basis calculation. 

First we record a useful fact. 

%
%

\begin{prop}
\label{not a zero of fn}
If $T$ is a twisted tensor product in Jordan normal form with $A=0$, $E \ne 0$ and $f = 1$, then $y$ is normal in $T$ and $(a,d)$ is not a zero of $f_n(t,u)$ for all $n\ge 1$. In particular, $a\neq 1$.
\end{prop}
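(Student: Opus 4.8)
The plan is to analyze the twisted tensor product $T = T(a,b,c,d,0,1;0,B,C,0,E,0)$ directly from its defining relations, showing first that $y$ is normal, then using that normality to produce inside $T$ a relation of the form $zx^kz = \text{(terms in the span of }x^iy^jz^l\text{)}$ which mimics the one-variable computation of Lemma \ref{key relation}. Concretely, the relation $zy = Bxy + Cy^2 + Eyz$ shows that, modulo the ideal generated by $y$, the variable $z$ acts on $y$ by $zy \equiv Eyz$; combined with $xy=yx$ this says $y$ is normal: $zy = (Bx + Cy + Ez)y$ and $xy = yx$, so $yT = Ty$. The first step is therefore to verify normality of $y$ carefully from the three defining relations (and to note $E \ne 0$ guarantees the "slot" for $z$ on the other side, which is what lets us divide by $y$ later).

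Next I would pass to the quotient algebra $\bar T = T/\langle y\rangle$. Since $y$ is normal, $\langle y \rangle = yT$ and $\bar T$ has the induced relations $\bar z \bar x = a\bar x^2 + d\bar x\bar z + \bar z^2$ (the $b,c,e$ terms vanish because they involve $y$, and $f=1$ gives the $\bar z^2$ term), i.e. $\bar T = C(a,d,1)$ in the notation of Section \ref{ttps of two one-variable polynomial algebras}. Moreover $T$ being a twisted tensor product forces $\dim T_n = \binom{n+2}{2}$, and normality of $y$ together with the fact that $y$ is a non-zero-divisor (which follows from the Hilbert series, since multiplication by $y$ is a degree-$1$ map $T(-1)\to T$ that must be injective for the Hilbert series $(1-t)^{-3}$ to be consistent with $\bar T$ having Hilbert series $(1-t)^{-2}$) shows the Hilbert series of $\bar T$ is exactly $(1-t)^{-2}$. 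Then, by the contrapositive of Lemma \ref{quad hilb series} and Theorem \ref{quadratic ttps of polynomial rings of one variable}: if $(a,d)$ were a zero of some $f_n$, then $C(a,d,1) = \bar T$ would fail to be a twisted tensor product of $\k[x]$ and $\k[z]$, hence (by the proof of that theorem) either would have the wrong Hilbert series or would carry a nontrivial dependence relation among $\{x^iz^j\}$ — either way $\dim \bar T_m < \binom{m+1}{2}$ for some $m$, contradicting what we just established. The statement $a \ne 1$ follows since $f_1(t,u) = 1 - t$, so $f_1(a,d) = 1-a$.

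The main obstacle I anticipate is making the Hilbert series bookkeeping airtight: I need to know both that $y$ is a non-zero-divisor in $T$ and that $\bar T = T/yT$ has Hilbert series $(1-t)^{-2}$, and these two facts are intertwined. The cleanest route is probably to observe that $T$ being a twisted tensor product gives $H_T(t) = (1-t)^{-3}$ outright, that right multiplication by the normal element $y$ gives an exact-enough sequence $0 \to (\ker)(-1) \to T(-1) \xrightarrow{\cdot y} T \to \bar T \to 0$, and that $\dim \bar T_n \le \binom{n+1}{2}$ always holds (since $\bar T$ is a quotient of $\k\langle x,z\rangle/\langle zx - ax^2 - dxz - z^2\rangle$, whose Hilbert series is coefficientwise $\le (1-t)^{-2}$ by Lemma \ref{quad hilb series} and \cite[p.~126]{PP}); comparing dimensions degree by degree then forces $\ker = 0$ and $\dim\bar T_n = \binom{n+1}{2}$ simultaneously. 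Once that is in hand, invoking Theorem \ref{quadratic ttps of polynomial rings of one variable} (whose proof exhibits, whenever $f_n(a,d)=0$, an explicit nontrivial dependence in $\mathrm{Span}\{x^iz^j\}\subseteq C(a,d,1)$) contradicts $\dim\bar T_m = \binom{m+1}{2}$, completing the argument.
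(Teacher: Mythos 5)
Your overall strategy is the paper's: use $E\ne 0$ to see that $y$ is normal, pass to $T/\la y\ra\cong \k\la x,z\ra/\la zx-ax^2-dxz-z^2\ra=C(a,d,1)$, and invoke Theorem \ref{quadratic ttps of polynomial rings of one variable}. The normality argument and the endgame are fine. The gap is in the step where you pin down the size of $\bar T=T/\la y\ra$. Your proposed fix for the circularity rests on the claim that the Hilbert series of $\k\la x,z\ra/\la zx-ax^2-dxz-z^2\ra$ is coefficientwise $\le (1-t)^{-2}$, citing Lemma \ref{quad hilb series} and \cite[p.~126]{PP}. That inequality goes the wrong way: the only alternative to $(1-t)^{-2}$ is the Fibonacci series $\sum F_nt^n$, which is coefficientwise \emph{larger} ($F_3=5>4$), and it occurs precisely in the case $a=1$, $d=-1$ --- i.e.\ exactly in a case you cannot afford to exclude a priori, since $f_1(a,d)=1-a=0$ there. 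So your dimension count $\dim\bar T_n=n+1+\dim K_{n-1}$ cannot be closed by an upper bound obtained this way, and the simultaneous deduction that $y$ is a non-zero-divisor and that $H_{\bar T}=(1-t)^{-2}$ does not go through. (There is also a small slip: the target dimension in degree $m$ is $m+1=\binom{m+1}{1}$, not $\binom{m+1}{2}$.)

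The repair is short and is what the paper does: you already know that $\{y^ix^jz^k\}$ is a $\k$-basis of $T$ (this is the definition of $T$ being a twisted tensor product of $\k[x,y]$ and $\k[z]$), and normality gives $\la y\ra=yT={\rm Span}\{y^ix^jz^k:i\ge 1\}$, so $\{x^jz^k\}$ descends to a $\k$-basis of $\bar T$. Hence $\bar T$ is literally a graded twisted tensor product of $\k[x]$ and $\k[z]$ isomorphic to $C(a,d,1)$, and Theorem \ref{quadratic ttps of polynomial rings of one variable} applies directly --- no Hilbert-series bookkeeping, no separate non-zero-divisor argument, and no appeal to the internals of that theorem's proof is needed. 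With that substitution your argument coincides with the paper's.
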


\begin{proof}
Let $T$ be a twisted tensor product in Jordan normal form with $A = 0$, $E \ne 0$ and $f=1$. Then $T$ may be presented as $$\k\la x, y, z \ra/\la xy-yx, zx-ax^2-byx-cy^2-dxz-eyz-z^2, zy-Byx-Cy^2-Eyz \ra.$$
Since $E\neq 0$, it is clear from this presentation that $y$ is normal in $T$. 

Next, it is easy to show that $$T/\la y \ra \cong \k \la x, z \ra/\la zx-ax^2-dxz-z^2 \ra.$$ 


By assumption, $T$ has $\{y^i x^j z^k : i, j, k \geq 0\}$ as a $\k$-basis, so it follows that $T/\la y \ra$ has $\{x^j z^k : j, k \geq 0\}$ as a $\k$-basis. Hence $T/\la y \ra$ is a twisted tensor product of $\k[x]$ and $\k[z]$. Then Theorem \ref{quadratic ttps of polynomial rings of one variable} implies that $(a, d)$ is not a zero of $f_n(t,u)$ for all $n \geq 1$.

Finally, recall that $f_1(t,u) = 1-t$, so we have $a \neq 1$.
\end{proof}

We briefly recall some terminology and notation for Gr\"obner-basis calculations, loosely following \cite{Berg}.

Let $\k\la X\ra$ be the free algebra on a (finite) set $X$. Fix a total order on $X$ and well-order monomials in $\k\la X\ra$ using degree-lexicographic order. Let $\mathscr B$ be a set of homogeneous elements of the form $W-f_W$ where $W$ is a monomial in $\k\la X\ra$  and $f_W\in\k\la X\ra$ such that $f$ is a linear combination of monomials less than $W$. Denote the set of all monomials $W$ such that $W-f_W\in\mathscr B$ for some $f_W\in\k\la X\ra$ by ${\rm ht}(\mathscr B)$; this is the set of \emph{high terms} of $\mathscr B$.
For simplicity, and to suit our purposes, we assume $\mathscr B$ has been chosen so that  $W\in {\rm ht}(\mathscr B)$ implies no subword of $W$ is in ${\rm ht}(\mathscr B)$.

If $m$ and $m'$ are monomials and $W-f_{W} \in\mathscr B$, let $r_{mWm'}:\k\la X\ra\to \k\la X\ra$ be the $\k$-linear map that is the identity on all monomials of $\k\la X\ra$ except $r_{mW m'}(mWm')=mf_Wm'$. We refer to the maps $r_{mWm'}$ as \emph{reductions}. Note that any infinite sequence of reductions applied to an element of $\k\la X\ra$ eventually stabilizes, though two such sequences need not result in the same element of $\k\la X\ra$. An \emph{overlap} occurs when there exist monomials $m, m', m''$ and elements $W_1-f_{W_1}, W_2-f_{W_2}\in \mathscr B$ such that $W_{1}=mm'$ and $W_{2}=m'm''$. An overlap is \emph{resolvable} if there are sequences of reductions $r$ and $r'$ such that $r(f_{W_1}m'')=r'(mf_{W_2})$. 

Consider the graded algebra $R=\k\la X\ra/\la\mathscr B\ra$. If all overlaps of $\mathscr B$ in homogeneous degrees $\le d$ are resolvable, then monomials of degree $\le d$ that do not contain any element of ${\rm ht}(\mathscr B)$ as a subword form a $\k$-basis for $R_{\le d}$. In this case we say $\mathscr B$ is a \emph{Gr\"obner basis to degree }$d$. If an overlap does not resolve, one may ``force'' it to resolve by adding to $\mathscr B$ a new element $W-f_W$ where $W$ is the largest monomial such that $\l(W-f_W)=r(f_{W_1}C-Af_{W_2})$ for some $\l\in\k^*$ and any sequence of reductions $r$ that eventually stabilizes on $f_{W_1}C-Af_{W_2}$. Note that adding such elements to $\mathscr B$ does not change $R$.

We apply this theory to the algebra $T$ in Jordan normal form. Order the monomials in $\k \la x, y, z \ra$ with left lexicographical order determined by $y < x < z$. Let 
$$\mathscr B=\{ xy-yx, z^2-(zx-ax^2-byx-cy^2-dxz-eyz), zy-(Ax^2+Byx+Cy^2+Eyz)\}.$$
Under the chosen term-order there are two overlaps in degree 3: $z^3$ and $z^2y$. Reducing the differences $(f_{z^2})z-z(f_{z^2})$ and  $(f_{z^2})y-z(f_{zy})$ as much as possible yields the following expressions:

\begin{align*}
G_1 &= (1+d) zxz +(a-1)zx^2 - (a-d^2-eA) x^2z+ (a+ad+bA)x^3 +(b+e)Eyzx\\
& +(eB-deE+2de-b)yxz+(b+bd+bB+cA+cAE+ae-aeE)yx^2\\
&+(cE^2-c+eC-e^2E+e^2)y^2z  +(c+cd+bC+cB+cBE-beE+be)y^2x\\
&+(cC+cCE-ceE+ce)y^3, \\
G_2 &= -Azx^2 - AEx^2z +(A-dA-AB)x^3+(E-BE-E^2)yzx\\
&-(dE+BE-dE^2)yxz+(B-a-dB-B^2-AC-ACE+aE^2-eA)yx^2\\
&-(CE^2+CE+eE-eE^2)y^2z+(C-b-dC-2BC-BCE+bE^2-eB)y^2x\\
 &-(c+C^2+C^2E-cE^2+eC)y^3.\\
\end{align*}

\begin{lemma}
\label{cases}
We have $\dim T_3=10$ if and only if $\dim {\rm Span}_{\k}\{G_1, G_2\}=1$.
If $T$ is a twisted tensor product in Jordan normal form with $f=1$, then either
\begin{enumerate}
\item $G_2= 0\neq G_1$ in $\k\la x,y,z\ra$ and $(a,d)$ is not a root of any $f_n(t,u)$, or
\item $G_2\neq 0$, $e=0$, $d=-1$, $A=1$ and $G_1=(1-a)G_2$.
\end{enumerate}
\end{lemma}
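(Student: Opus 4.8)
The plan is to work out the equivalences and the case dichotomy from the explicit formulas for $G_1$ and $G_2$. First I would establish the easy equivalence: $T$ is a quadratic twisted tensor product in Jordan normal form with $f=1$ implies, by Proposition \ref{twisting maps and ttps} and Proposition \ref{ttp yields twisting map}, that $\dim T_3 = \dim(R\tsr S)_3 = 10$. Conversely, since $\mathscr B$ contains the three quadratic relations of $T$, the discussion of Gr\"obner bases shows $\dim T_3 = 10$ exactly when all degree-$3$ overlaps resolve after we adjoin $G_1$ and $G_2$; the number of ``extra'' relations forced is $\dim {\rm Span}_\k\{G_1,G_2\}$, so $\dim T_3 = 10 - \dim {\rm Span}_\k\{G_1, G_2\}$. (Here one should note $\dim T_3 \le 10$ always, since $T$ is a quotient of $R\ast S$ and, after the two quadratic relations, $(R\ast S/I)_3$ has dimension $\le 10 + \dim {\rm Span}\{G_1,G_2\}$; more carefully, the degree-$2$ relations already cut the free algebra's degree-$3$ part down to a span of the $10$ standard monomials $y^ix^jz^k$ together with the leftover $zxz$-type terms, and $G_1, G_2$ express exactly those leftovers.) So $\dim T_3 = 10$ iff $\dim{\rm Span}_\k\{G_1,G_2\} \le 1$; but if $T$ is genuinely a twisted tensor product then $G_1\ne 0$ is forced (otherwise a spanning set computation would give $\dim T_3 < 10$), hence the condition is $\dim {\rm Span}_\k\{G_1,G_2\} = 1$, i.e. $G_1$ and $G_2$ are linearly dependent and $G_1\ne 0$.

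Next, assuming $T$ \emph{is} a twisted tensor product in Jordan normal form with $f=1$, I would split on whether $G_2 = 0$. If $G_2 = 0$, then $G_1\ne 0$ as just noted, and we are in case (1); to get the conclusion that $(a,d)$ is not a root of any $f_n$, I would use Proposition \ref{not a zero of fn}. For this I must verify that $G_2 = 0$ forces $A = 0$ (and $E\ne 0$): reading off the coefficient of $x^3$ and of $y^2z$ and $y^3$ in $G_2$, the vanishing of $G_2$ gives $A(1 - d - B) = 0$, $CE(E+1) + eE(1-E) = 0$, $c + C^2(1+E) - cE^2 + eC = 0$, etc. I would argue that if $A\ne 0$ then the remaining coefficient equations, together with the Jordan-normal-form constraints from Lemma \ref{JNFs} (when $e=0$ we may take $A\in\{0,1\}$, so $A=1$), are incompatible with $T$ having the right Hilbert series — in particular one can show $A = 1$ forces $\dim {\rm Span}\{G_1,G_2\} = 2$ unless the other parameters collapse in a way that contradicts $f=1$ being in normal form. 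Actually the cleanest route: $G_2 = 0$ together with $A \ne 0$ will be shown to land in case (2)'s parameter regime and hence $G_2\ne 0$, a contradiction; so $G_2 = 0 \Rightarrow A = 0$, and $E \ne 0$ follows because $E = 0$ with $A=0$ makes the monomial $y$ not appear in any relation productively — more precisely, one checks directly from the reduced $G_1$ that $A = E = 0$ (with $f=1$) forces a degree-$3$ dependence, contradicting $\dim T_3 = 10$. Then Proposition \ref{not a zero of fn} applies verbatim.

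For the case $G_2\ne 0$: since $G_1$ and $G_2$ are linearly dependent (from $\dim T_3 = 10$) and $G_2\ne 0$, we have $G_1 = \lambda G_2$ for a unique scalar $\lambda$. I would then compare coefficients of corresponding monomials in $G_1$ and $\lambda G_2$. The coefficient of $zx^2$ in $G_1$ is $a-1$ and in $G_2$ is $0$; so either $a = 1$ or $G_1$ has no $zx^2$ term — but if $\lambda G_2 = G_1$ and $G_2\ne 0$, the $zxz$-coefficient of $G_1$ is $1+d$ while $G_2$ has no $zxz$ term, forcing $1 + d = 0$, i.e. $d = -1$. Similarly, the $x^2z$ coefficient of $G_1$ is $-(a - d^2 - eA) = -(a - 1 - eA)$ and $G_2$'s is $-AE$; the $zxz$ and $zx^2$ comparisons force $\lambda(\text{those }G_2\text{ terms}) = 0$, so $\lambda\cdot 0 = 1+d$ is impossible unless I instead read the dependence as $G_2 = \mu G_1$ — but $G_1$ has a $zxz$ term and $G_2$ does not, so $\mu = 0$, contradicting $G_2\ne 0$. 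Hence the dependence must be $G_1 = \lambda G_2$ with the $zxz$ and $zx^2$ coefficients of $G_1$ vanishing: $1 + d = 0$ and $a - 1 = 0$?? — no: $a-1$ need not vanish, rather $\lambda$ times the ($zx^2$)-coefficient of $G_2$, which is $0$, equals $a-1$, forcing $a = 1$. Then $\lambda\cdot 0 = 1+d$ forces $d = -1$. With $a = 1$, $d = -1$ in hand, I examine the remaining coefficients. The $yzx$ coefficient of $G_1$ is $(b+e)E$ and of $G_2$ is $E - BE - E^2 = E(1 - B - E)$; the $x^3$ coefficient of $G_1$ is $a + ad + bA = bA$ (using $a=1$, $d=-1$) and of $G_2$ is $A(1 - d - B) = A(2 - B)$; and so on. Matching these systematically, together with the normal-form options, should pin down $e = 0$, $A = 1$, and $\lambda = 1 - a$. \textbf{The main obstacle} I anticipate is precisely this last bookkeeping: organizing the roughly ten coefficient-matching equations (for the monomials $zxz, zx^2, x^2z, x^3, yzx, yxz, yx^2, y^2z, y^2x, y^3$) into a clean deduction that $G_2\ne 0$ forces exactly $e=0$, $d=-1$, $A=1$, $G_1 = (1-a)G_2$, rather than some stray extra branch — and checking, using Lemma \ref{JNFs}'s constraint that $e=0\Rightarrow A\in\{0,1\}$, that $A=1$ (not $A=0$) is the only consistent possibility when $G_2\ne 0$. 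The derivation that $a=1$ in case (2) deserves care, since the statement only asserts $G_1=(1-a)G_2$ and does not list $a=1$ as a hypothesis; I would double-check whether the intended reading is that $a$ is a free parameter with $G_1 = (1-a)G_2$ as an identity in $\k\la x,y,z\ra$ after the substitutions $e=0,d=-1,A=1$, in which case the coefficient comparisons become polynomial identities in the remaining free parameters $b, c, B, C, E$ and no value of $a$ is forced — that is the version I will prove.
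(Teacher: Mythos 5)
Your overall strategy --- count irreducible monomials to get the first equivalence, then compare coefficients of $G_1$ and $G_2$ to obtain the dichotomy --- is the same as the paper's, but the execution has three concrete errors. First, there are eleven, not ten, degree-$3$ monomials avoiding the high terms $xy$, $zy$, $z^2$, namely $y^3,\ y^2x,\ y^2z,\ yx^2,\ yxz,\ yzx,\ x^3,\ x^2z,\ xzx,\ zx^2,\ zxz$; note that $yzx$, $xzx$, $zx^2$, $zxz$ are irreducible but not of the form $y^ix^jz^k$, which is where your count of ten goes wrong. Hence $\dim T_3=11-\dim{\rm Span}_{\k}\{G_1,G_2\}$, and the stated equivalence is immediate and unconditional. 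Your formula $\dim T_3=10-\dim{\rm Span}_{\k}\{G_1,G_2\}$ makes the ``only if'' direction false (a zero span would give $\dim T_3=10$ on your count, whereas it actually gives $11$), and your patch imports the twisted-tensor-product hypothesis into a statement that does not carry it. Second, your claim that $A=E=0$ with $f=1$ forces a degree-$3$ dependence is false: case (i) of Theorem \ref{ttps with G_2 = 0} exhibits genuine twisted tensor products with $e=C=E=A=0$. (In any case, $A=0$ follows at once from $G_2=0$ because the $zx^2$-coefficient of $G_2$ is $-A$; no incompatibility argument is needed.)

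Third, and most seriously, in the case $G_2\neq 0$ you misread the $zx^2$-coefficient of $G_2$ as $0$; it is $-A$. The correct comparison is $a-1=\l\cdot(-A)$, which does not force $a=1$, and your ensuing worry that the lemma is misstated stems entirely from this misreading. The real content of case (2), which you defer as ``bookkeeping that should pin down $e=0$, $A=1$, $\l=1-a$'', is exactly the part that requires an idea. One must rule out $A=0$ as follows: if $A=0$ (and $e=0$, $d=-1$), then $G_2=E(1-B-E)(yzx+yxz)+S$ with $S\in{\rm Span}_{\k}\{y^ix^jz^k\}$, so $G_2\neq 0$ together with the linear independence of the monomials $y^ix^jz^k$ in the twisted tensor product $T$ forces $E(1-B-E)\neq 0$; then $E\neq 0$ and the $zx^2$-comparison gives $a=1$, contradicting Proposition \ref{not a zero of fn}. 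One must also rule out $e=1$: the Jordan normal form then gives $E=d=-1$, and the $zx^2$ and $x^2z$ coefficients yield $1-a=\l A$ and $A+1-a=\l A$, forcing $A=0$ and $a=1$, again contradicting Proposition \ref{not a zero of fn}. Without these two sub-arguments the dichotomy is not established, so as written the proposal does not prove the lemma.
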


\begin{proof}
There are eleven monomials of degree 3 in $\k\la x,y,z\ra$ that do not contain an element of ${\rm ht}(\mathscr B)=\{xy, zy, z^2\}$ as a subword. If $G_1=G_2=0$, then $\mathscr B$ is a Gr\"obner basis for $T$ to degree 3 and $\dim T_3=11$. If $G_1$ and $G_2$ are linearly independent, then in order for both overlaps $z^3$ and $z^2y$ to resolve, suitable rescalings of both $G_1$ and $G_2$ must be appended to $\mathscr B$, implying $\dim T_3=9$. The first statement follows.

If $T$ is a twisted tensor product, then the Hilbert series of $T$ is $(1-t)^{-3}$, hence $\dim T_3=10$. 
If $G_2=0$, then $A=0$, so $(a,d)$ is not a root of any $f_n(t,u)$ by Proposition \ref{not a zero of fn}. In particular, $a\neq 1$ and thus $G_1\neq 0$. 

If $G_2\neq 0$, then $G_1=\l G_2$ for some scalar $\l$.  Since $G_2$ has no $zxz$ term, we must have $d=-1$. Since $T$ is in Jordan normal form, $e\in\{0,1\}$. We claim $e=0$. If $e=1$, then $d=E=-1$. Examining the coefficients of $zx^2$ and $x^2z$, we have $1-a=\l A$ and $A+1-a=\l A$. This implies $A=0=1-a$, contradicting Proposition \ref{not a zero of fn}. Thus $e=0$.

By Lemma \ref{JNFs}, since $e=0$, we have $A\in\{0,1\}$. We claim $A=1$. On the contrary, if $A=0$, then note that $$G_2 = E(1-B-E) yzx+E(1-B-E) yxz + S$$
where $S\in {\rm Span}_{\k} \{y^i x^j z^k\}.$ Since $T$ is a twisted tensor product, $\{y^ix^jz^k\}$ is a $\k$-basis for $T$, so the first two terms in the above expression for $G_2$ cannot vanish. Thus $E\neq 0$. Since $G_1=\l G_2$, comparing coefficients of $zx^2$ shows that $a-1=0$, again contradicting Proposition \ref{not a zero of fn}. Thus $A=1$, and the fact that $\l = 1-a$ follows by considering the coefficients of the $zx^2$ terms.
\end{proof}

In a forthcoming paper we will study the geometry, in the sense of \cite{ATVI}, of twisted tensor products. In the case $f = 1$, $G_2 = 0$, the point scheme of $T$ is reducible; whereas in the case $f = 1$, $G_2 \neq 0$, generically, the point scheme of $T$ is an elliptic curve. We refer to quadratic twisted tensor products (in Jordan normal form) where $f=1$ and $G_2=0$ as \emph{reducible}, and those where $f=1$ and $G_2\neq 0$ as \emph{elliptic}. 

We consider these two cases in the following subsections.

\subsection{The reducible case.}
In this subsection we fix $$T = T(a, b, c, d, e, 1; A, B, C, 0, E, 0)$$ and show that if $G_2=0$ and if $(a,d)$ is not a root of any $f_n(t,u)$, then $T$ is a quadratic twisted tensor product. 

We define a filtration on $T$ following Section 4.2.1 of \cite{Loday-Vallette}. Let $V_1 = \k y$ and $V_2 = \k x \oplus \k z$, so $T_1=V_1\oplus V_2$. Then the natural identification
$$(T_1)^{\tsr n} \cong \bigoplus_{(i_1,\ldots, i_n)\in \{1,2\}^n} V_{i_1}\tsr \cdots\tsr V_{i_n}$$
endows the tensor algebra $T(T_1)$ with the structure of a $\k$-vector space graded by the ordered monoid $\mathcal M=\bigcup_{n=0}^{\infty}\{1,2\}^n$ of all tuples with entries in $\{1,2\}$. The set $\mathcal M$ is a monoid under concatenation of tuples and the left-lexicographic order $$\emptyset<(1)<(2)<(1,1)<(1,2)<(2,1)<(2,2)<\cdots$$ 
is compatible with concatenation.
It is clear that the $\mathcal M$-grading respects the multiplication in $T(T_1)$. Observe that this is a refinement of the $\N$-grading given by tensor degree. 

The filtration naturally associated to this $\mathcal M$-grading is given by
$$F_{(i_1,\ldots,i_n)}T(T_1)= \bigoplus_{(j_1,\ldots,j_m)\le (i_1,\ldots,i_n)} V_{j_1}\tsr\cdots\tsr V_{j_m}.$$
The canonical projection $T(T_1)\to T$ induces a filtration on $T$, and we denote the associated $\mathcal M$-graded algebra by $\gr_F T$ or just $\gr\ T$. 

There is a canonical $\mathcal M$-graded algebra homomorphism $T(T_1)\to \gr\ T$, and we denote the kernel of its restriction to $T_1\tsr T_1$ by $R_{\rm lead}$. Let $T^{\circ} = T(T_1)/(R_{\rm lead})$. Noting that $G_2=0$ implies $A=0$, we have
$$T^{\circ} = \k \la y, x, z\ra/\la zx-ax^2-dxz-z^2, zy, xy \ra.$$

\begin{lemma} 
\label{Tcirc}
Assume $G_2=0$ and $(a,d)$ is not a root of any $f_n(t,u)$. Then the algebra $T^{\circ}$ is Koszul with Hilbert series $H_{T^{\circ}} = (1-t)^{-3}$. The set $\{y^i x^j z^k : i, j, k \geq 0\}$ is a $\k$-basis for $T^{\circ}$.
\end{lemma}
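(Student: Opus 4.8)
The plan is to analyze $T^\circ=\k\la y,x,z\ra/\la zx-ax^2-dxz-z^2,\ zy,\ xy\ra$ directly via the Diamond Lemma, with the chosen term order $y<x<z$ and degree-lexicographic comparison. First I would record the high-term set $\mathrm{ht}(\mathscr B^\circ)=\{xy,\ zy,\ zx\}$ coming from the three given relations, rewritten as $xy=yx$, $zy=0$, $zx=ax^2+dxz+z^2$. (Note that in $T^\circ$, unlike $T$, the relation $zx-\cdots$ has high term $zx$, not $z^2$, because the $ay^2$, $byx$, $cy^2$, $eyz$ terms all have higher $V$-weight and so drop out when passing to $R_{\mathrm{lead}}$.) The normal monomials are exactly those avoiding $xy$, $zy$, $zx$ as subwords, i.e. words in which no $x$ precedes a $y$ and no $z$ precedes an $x$ or a $y$ — that is precisely the set $\{y^ix^jz^k\}$. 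So the claimed basis is the normal-monomial set for $\mathscr B^\circ$, and everything reduces to checking that $\mathscr B^\circ$ is a Gröbner basis, i.e. all overlap ambiguities resolve.

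Next I would enumerate the overlaps. The high terms $xy$, $zy$, $zx$ overlap in the words $zxy$ (overlap of $zx$ and $xy$), $zzy=z^2y$ (overlap of $zx$... actually $zy$ with... ) — more carefully, the overlaps are: $z\cdot xy$ versus $zx\cdot y$ on $zxy$; $zz\cdot x$ — there is no high term $zz$, so the genuine ambiguities are $zxy$, and $z(zx)$ vs $(z\cdot z)x$ is not an overlap since $zz\notin\mathrm{ht}$; also $zx\cdot x$ has no overlap. Thus the only overlap ambiguity in degree $3$ is $zxy$, together with possibly $zzx$-type words if $z^2$ reduces — but $z^2$ is normal here. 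Resolving $zxy$: reducing $z\cdot(xy)\to z\cdot yx\to (zy)x\to 0$, while $(zx)\cdot y\to (ax^2+dxz+z^2)y = ax^2y+dx(zy)+z(zy)\to ax\cdot xy\to ax\cdot yx\to a\cdot (xy)x\to ayx\cdot x$, wait this needs $x^2y\to x(xy)\to x yx\to (xy)x\to yx x=yx^2$, so the second path gives $ayx^2$. These disagree, so I would expect instead that the relation must be taken with high term $z^2$ after all — which is exactly why the hypothesis "$(a,d)$ not a root of any $f_n$" enters. The correct move, following Theorem \ref{quadratic ttps of polynomial rings of one variable} applied to the subalgebra generated by $x,z$: since $y$ is a (two-sided) annihilating-type normal element ($zy=0$, $xy=yx$, and one checks $yz$, $yx$ reduce cleanly), the quotient $T^\circ/\la y\ra\cong \k\la x,z\ra/\la zx-ax^2-dxz-z^2\ra = C(a,d,1)$, and by Theorem \ref{quadratic ttps of polynomial rings of one variable} the hypothesis $f_n(a,d)\neq 0$ for all $n$ guarantees $C(a,d,1)$ is a twisted tensor product of $\k[x]$ and $\k[z]$, hence has Hilbert series $(1-t)^{-2}$ and $\k$-basis $\{x^jz^k\}$.

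So the cleaner route, which I would actually write, is: (i) show $y$ is normal in $T^\circ$ with $yT^\circ=T^\circ y$ free of annihilator issues, i.e. $T^\circ$ is a free module over $\k[y]$ or at least $\gr$-flat along $y$; concretely show the short exact sequence $0\to yT^\circ\to T^\circ\to T^\circ/\la y\ra\to 0$ with $T^\circ/\la y\ra\cong C(a,d,1)$ and $yT^\circ\cong (T^\circ/\mathrm{ann}(y))(-1)$; (ii) identify $T^\circ/\la y\ra = C(a,d,1)$ and invoke Theorem \ref{quadratic ttps of polynomial rings of one variable} to get its Hilbert series $(1-t)^{-2}$ and Koszulity (via \cite[Prop 5.5]{C-G} or directly, $C(a,b,1)$ Koszul as a one-variable-by-one-variable TTP); (iii) conclude by induction / the long exact Tor sequence that $T^\circ$ is Koszul with $H_{T^\circ}(t)=(1-t)^{-1}H_{C(a,d,1)}(t)=(1-t)^{-3}$, and that $\{y^ix^jz^k\}$ is a basis since it maps onto a basis of $\gr$ with respect to the $y$-adic-type filtration. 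Koszulity of $T^\circ$ can alternatively be deduced from the fact that $T^\circ$ is itself a graded twisted tensor product $C(a,d,1)\tsr_{\t'}\k[y]$ for a one-sided twisting map (with $y$ central-up-to-annihilation), but the one-variable case \cite[Prop 5.5]{C-G} does not literally apply, so I would lean on the normal-element / Diamond-Lemma argument instead.

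The main obstacle I anticipate is verifying that the overlap ambiguities of $\mathscr B^\circ$ all resolve — equivalently, that $y$ behaves as a "regular normalizing sequence of length one" relative to $C(a,d,1)$ — and this is exactly where the Catalan-type nonvanishing $f_n(a,d)\neq 0$ must be used (without it, $C(a,d,1)$ degenerates to the Fibonacci Hilbert series and $T^\circ$ is too big). Once that input is in hand, the Hilbert series computation $H_{T^\circ}=(1-t)^{-3}$, the basis statement, and Koszulity (a normal regular extension of a Koszul algebra by a Koszul algebra is Koszul) are all routine.
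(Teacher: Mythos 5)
There is a genuine gap, and it starts with a misreading of $T^{\circ}$. The algebra $T^{\circ}=T(T_1)/(R_{\rm lead})$ is formed from the \emph{leading terms} of the relations with respect to the $\mathcal M$-grading determined by $V_1=\k y$, $V_2=\k x\oplus\k z$. The leading term of $xy-yx$ is $xy$ alone (since $yx\in V_1\tsr V_2$ sits in strictly lower $\mathcal M$-degree than $xy\in V_2\tsr V_1$), so in $T^{\circ}$ the relation is $xy=0$, not $xy=yx$; likewise $zy=0$. Your Diamond Lemma setup, your overlap computation on $zxy$, and your normality discussion all analyze the wrong algebra. More importantly, even with the correct relations the element $y$ is \emph{not regular} in $T^{\circ}$ (we have $zy=xy=0$), so the principle you ultimately invoke --- ``a normal regular extension of a Koszul algebra by a Koszul algebra is Koszul'' --- simply does not apply, and the $y$-adic filtration argument does not yield $H_{T^{\circ}}=(1-t)^{-1}H_{C(a,d,1)}$ without regularity. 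You flag the annihilator issue yourself but then declare the conclusion routine via a regularity-based result; that is the step that fails.

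Ironically, the route you mention and dismiss in passing is essentially the paper's proof. One realizes $T^{\circ}\cong\k[y]\tsr_{\p}D$ where $D=\k\la x,z\ra/\la zx-ax^2-dxz-z^2\ra=C(a,d,1)$ and $\p:D\tsr\k[y]\to\k[y]\tsr D$ is the one-sided graded twisting map with $\p_{\ge 2}=0$ (consistent with $xy=zy=0$). The hypothesis that $(a,d)$ is not a root of any $f_n$ makes $D$ a twisted tensor product of $\k[x]$ and $\k[z]$ by Theorem \ref{quadratic ttps of polynomial rings of one variable}, so the basis $\{y^ix^jz^k\}$ and the Hilbert series $(1-t)^{-3}$ fall out of the iterated twisted-tensor-product structure with no filtration or regularity argument needed; Koszulity of $D$ comes from the results of \cite{C-G}, and Koszulity of $\k[y]\tsr_{\p}D$ from \cite[Theorem 5.3(2)]{C-G}, which is precisely the statement covering one-sided twisting maps with a one-variable polynomial factor (the relevant citation is Theorem 5.3 of that paper, not Proposition 5.5). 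Your identification of $T^{\circ}/\la y\ra$ with $C(a,d,1)$ and your use of the $f_n$ nonvanishing are the right ingredients, but the assembly via a regular normal element cannot be made to work here.
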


\begin{proof} 
Let $D=\k\la x,z\ra/\la  zx-ax^2-dxz-z^2\ra$ and define a graded twisting map $\p: D \tsr \k[y] \to \k[y] \tsr D$ by 
${\p}(1 \tsr y) = y\tsr 1$, ${\p}(x \tsr 1) = 1\tsr x$, ${\p}(z \tsr 1) = 1\tsr z$, and ${\p}_{\ge 2}=0$. Note that $\k[y]  \tsr_{\p} D$ and $T^{\circ}$ are isomorphic as $\N$-graded algebras.

Since $(a,d)$ is not a root of any $f_n(t,u)$, \cite[Theorem 6.2]{C-G} and our results in Section \ref{ttps of two one-variable polynomial algebras} imply that $D = \k[x] \tsr_{\s} \k[z]$ for the obvious twisting map $\s$. Thus $T^{\circ}\cong \k[y]  \tsr_{\p} (\k[x] \tsr_{\s} \k[z])$. This shows $\{y^i x^j z^k : i, j, k \geq 0\}$ is a $\k$-basis for $T^{\circ}$ and hence the Hilbert series of $T^{\circ}$ is $(1-t)^{-3}$.

Finally, by \cite[Theorem 6.2 and Theorem 5.5]{C-G} we see that $D$ is Koszul. By  \cite[Theorem 5.3 (2)]{C-G}, $\k[y] \tsr_{\p} D$ is Koszul, hence $T^{\circ}$ is Koszul. 
\end{proof}

\begin{thm}
\label{ttps with A=0, diagonal Jordan form}
If $G_2=0$ and $(a,d)$ is not a root of any $f_n(t,u)$, then $$T = T(a, b, c, d, e, 1; A, B, C, 0, E, 0)$$ is a quadratic twisted tensor product. Moreover, $T$ is Koszul.
\end{thm}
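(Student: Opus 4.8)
The plan is to compare $T$ directly with the associated $\mathcal M$-graded algebra $\gr\ T$, and to use the rigidity provided by Lemma \ref{Tcirc}. Recall that the canonical surjection $T^{\circ}\to \gr\ T$ of $\mathcal M$-graded algebras always exists, since $R_{\rm lead}$ is contained in the kernel of $T(T_1)\to \gr\ T$. The first step is to observe that because $\gr\ T$ is a quotient of $T^{\circ}$ and because $\dim T_n=\dim(\gr\ T)_n$ for all $n$, any proof that $\dim T_n\geq \dim T^{\circ}_n$ (equivalently, that the surjection $T^{\circ}\to\gr\ T$ is an isomorphism) will force $H_T=H_{T^{\circ}}=(1-t)^{-3}$ by Lemma \ref{Tcirc}. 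So the crux is to show that $T^{\circ}$ and $\gr\ T$ have the same Hilbert series, i.e.\ that no ``extra'' relations are created in passing from $T^{\circ}$ to $\gr\ T$.

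Concretely I would do this with the Gr\"obner-basis machinery already set up before Lemma \ref{cases}. Since $G_2=0$, the set $\mathscr B$ reduces to $\{xy-yx,\ z^2-(zx-ax^2-byx-cy^2-dxz-eyz),\ zy-(Byx+Cy^2+Eyz)\}$, and by Lemma \ref{cases} we know $\dim T_3=10$. The key step is to prove that $\mathscr B$ is in fact a Gr\"obner basis in \emph{all} degrees — equivalently, that the degree-3 overlaps $z^3$ and $z^2y$ (the only overlaps, since $\mathrm{ht}(\mathscr B)=\{xy,zy,z^2\}$ has no higher overlaps that aren't consequences of these) resolve, and that no new high terms need to be added. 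Since $G_2=0$ automatically, the overlap $z^2y$ resolves; for $z^3$ we must show $G_1$ reduces to zero, i.e.\ $G_1$ lies in the ideal generated by the degree-$\le 2$ part of $\mathscr B$. Here is where Proposition \ref{not a zero of fn} / the hypothesis that $(a,d)$ is not a root of any $f_n$ enters: the coefficient of $zxz$ in $G_1$ is $1+d$, and more importantly the $x$-$z$ part of the computation is governed exactly by the one-variable algebra $D=\k\la x,z\ra/\la zx-ax^2-dxz-z^2\ra$, which by Lemma \ref{Tcirc} (via Section \ref{ttps of two one-variable polynomial algebras}) is a twisted tensor product of $\k[x]$ and $\k[z]$. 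So I would argue that modulo the relations, $G_1$ can be rewritten entirely inside $\mathrm{Span}\{y^ix^jz^k\}$ and that the resulting expression is forced to vanish because in $T^{\circ}\cong \k[y]\tsr_{\p}(\k[x]\tsr_{\s}\k[z])$ the monomials $y^ix^jz^k$ are linearly independent and the overlap already resolves there. In other words: the overlap $z^3$ resolves in $T^{\circ}$ by Lemma \ref{Tcirc}, and the passage from $T^{\circ}$ to $T$ only perturbs $G_1$ by lower-order ($\mathcal M$-filtration) terms, so if the leading resolution is trivial then the whole thing resolves.

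Once $\mathscr B$ is a Gr\"obner basis, the Diamond Lemma gives that $\{y^ix^jz^k : i,j,k\geq 0\}$ is a $\k$-basis for $T$, hence $i_R$ and $i_S$ are injective and the canonical map $R\tsr S\to T$ is a linear isomorphism; thus $T$ is a graded twisted tensor product of $R$ and $S$, and it is quadratic by construction (and by Theorem \ref{quadratic iff uep}). For the Koszul claim, I would use the standard fact that if a filtered algebra has $\gr\ T$ Koszul then $T$ is Koszul: here $\gr\ T\cong T^{\circ}$ (by the Hilbert-series equality just established, since the surjection $T^{\circ}\to\gr\ T$ between spaces of equal finite dimension in each degree is an isomorphism), and $T^{\circ}$ is Koszul by Lemma \ref{Tcirc}. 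One must check the filtration is by the ordered monoid $\mathcal M$ in the sense required by the relevant Koszulity-lifting lemma (Section 4.2.1 of \cite{Loday-Vallette}), which is exactly the setup arranged before Lemma \ref{Tcirc}.

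The main obstacle I anticipate is the verification that the degree-3 overlap $z^3$ genuinely resolves in $T$ — i.e.\ that $G_1$ really does reduce to zero rather than to a nonzero element of $\mathrm{Span}\{y^ix^jz^k\}$. This is where the hypothesis ``$(a,d)$ is not a root of any $f_n$'' is indispensable: without it, the $x$-$z$ subalgebra $D$ fails to have the expected basis and $G_1$ need not vanish (this is precisely the failure recorded in Theorem \ref{quadratic ttps of polynomial rings of one variable}). Making the reduction of $G_1$ rigorous — tracking how the $zxz$, $zx^2$, $x^2z$, $x^3$ terms and the various $y$-weighted terms collapse once one substitutes $zx = ax^2+byx+cy^2+dxz+eyz+z^2$ and $zy = Byx+Cy^2+Eyz$ repeatedly — is the computational heart of the argument, but the conceptual point is that it is forced by knowing $\dim T^{\circ}_n$ and that the analogous overlap already resolves in $T^{\circ}$.
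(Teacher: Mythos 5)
Your high-level skeleton (compare $T$ with $T^{\circ}$ via $\gr\,T$, use the Koszulity of $T^{\circ}$ from Lemma \ref{Tcirc} and the lifting machinery of \cite{Loday-Vallette}) is the right one, but the step you identify as ``the computational heart'' is based on a false premise and would fail if carried out. You propose to show that $\mathscr B$ is a Gr\"obner basis in all degrees, i.e.\ that $G_1$ reduces to zero. It does not, and it cannot: there are \emph{eleven} degree-$3$ monomials avoiding ${\rm ht}(\mathscr B)=\{xy,zy,z^2\}$ (they include $yzx$, $xzx$, $zxz$, not just the six monomials $y^ix^jz^k$ with $i+j+k=3$), so if both overlaps resolved one would get $\dim T_3=11$, which is incompatible with the Hilbert series $(1-t)^{-3}$ that a twisted tensor product must have. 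In fact the hypothesis forces $a\neq 1$, hence $G_1\neq 0$, and it is precisely the adjunction of the single new relation $G_1$ that brings $\dim T_3$ down to $10$ --- this is the content of Lemma \ref{cases}, not something to be contradicted. Relatedly, your claim that ``the overlap already resolves in $T^{\circ}$'' is also false: the same computation in $T^{\circ}$ produces $(1+d)zxz+(a-1)zx^2-(a-d^2)x^2z+a(1+d)x^3\neq 0$, and the subalgebra $D=\k\la x,z\ra/\la zx-ax^2-dxz-z^2\ra$ is not PBW for this order (its Gr\"obner basis is governed by the infinite family of relations in Lemma \ref{key relation}); its basis $\{x^iz^j\}$ comes from the twisted-tensor-product structure, not from a quadratic Gr\"obner basis. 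Finally, your fallback argument --- rewrite $G_1$ into ${\rm Span}\{y^ix^jz^k\}$ and conclude it vanishes because those monomials are independent in $T^{\circ}$ --- is circular, since independence in $T^{\circ}$ is not independence in $T$; transferring it is exactly what needs to be proved.

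The gap is closed not by any higher-degree Gr\"obner computation but by a single dimension count in degree $3$: Lemma \ref{cases} gives $\dim T_3=\dim(\gr\,T)_3=10$ (because $G_2=0\neq G_1$), Lemma \ref{Tcirc} gives $\dim T^{\circ}_3=10$, so the canonical surjection $T^{\circ}\to\gr\,T$ is injective in degree $3$. Since $T^{\circ}$ is Koszul, \cite[Theorem 4.2.4]{Loday-Vallette} then asserts that this surjection is an isomorphism in \emph{all} degrees and that $T$ is Koszul; the basis $\{y^ix^jz^k\}$ of $T^{\circ}$ then transfers to $\gr\,T$ and hence to $T$, which yields the twisted-tensor-product statement. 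You cite the right section of \cite{Loday-Vallette} but do not use its actual force, which is exactly to replace the all-degrees verification you were attempting by the degree-$3$ check already done in Lemma \ref{cases}.
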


\begin{proof}
Since $(a,d)$ is not a root of $f_1(t,u) = 1-t$, $a\neq 1$ and hence $G_1\neq 0$. By Lemma \ref{cases}, $\dim T_3 = 10$. As graded vector spaces, $T$ and $\gr \ T$ are isomorphic, so $\dim (\gr\ T)_3=10$. By Lemma \ref{Tcirc}, the Hilbert series of $T^{\circ}$ is $(1-t)^{-3}$, so the canonical graded projection $T^{\circ} \to \gr \ T$ is injective in homogeneous degree 3. 

Lemma \ref{Tcirc} also shows that the algebra $T^{\circ}$ is Koszul, so, by \cite[Theorem 4.2.4]{Loday-Vallette}, the canonical graded projection $T^{\circ} \to \gr \ T$ is an isomorphism and $T$ is Koszul. 

By Lemma \ref{Tcirc}, the algebra  $T^{\circ}$ has a linear basis of the form $\{y^i x^j z^k : i, j, k \geq 0\}$. Since the projection $T^{\circ}\to \gr\ T$ is an isomorphism, $\gr \ T$ also has this basis, and consequently $T$ does as well. It follows that $T$ is a twisted tensor product. 
\end{proof}

We conclude this subsection by recording the solution set of the system of equations determined by setting $G_2=0$, and consequently, the parameter values for which $T$ is a twisted tensor product of $R$ and $S$. The following is immediate from the form of the expression $G_2$.

\begin{lemma}
\label{system for z^2y to resolve}
The expression $G_2=0$ if and only if $A=0$ and all of the following hold:
\begin{align*}
&(1) \ \ E(1-B-E) = 0,  \\ %
&(2) \ \ E(-d-B+dE) = 0, \\ %
&(3) \ \ B(1-d-B)-a(1-E^2) = 0,  \\ %
&(4) \ \ E(C+CE+e-eE) = 0, \\%
&(5) \ \ C(1-d-2B-BE)-b(1-E^2)-eB = 0, \\ %
&(6) \ \ (1+E)(-c(1-E)-C^2)-eC = 0. \\ %
\end{align*}
\end{lemma}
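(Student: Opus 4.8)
The plan is to read the statement straight off the displayed formula for $G_2$. In that formula $G_2$ is written as a $\k$-linear combination of the nine monomials $zx^2$, $x^2z$, $x^3$, $yzx$, $yxz$, $yx^2$, $y^2z$, $y^2x$, $y^3$ (each of which, incidentally, contains no element of ${\rm ht}(\mathscr B)=\{xy,zy,z^2\}$ as a subword). These nine monomials are pairwise distinct, hence linearly independent in the free algebra $\k\la x,y,z\ra$, so the first step is simply to record that $G_2=0$ in $\k\la x,y,z\ra$ if and only if all nine of its coefficients vanish.

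The second step is to peel off the coefficient of $zx^2$, which is $-A$. So $G_2=0$ forces $A=0$. Conversely, once $A=0$ is imposed, the coefficient of $x^2z$ (namely $-AE$) and the coefficient of $x^3$ (namely $A(1-d-B)$) vanish identically and impose no further condition. Hence $G_2=0$ is equivalent to $A=0$ together with the vanishing of the coefficients of $yzx$, $yxz$, $yx^2$, $y^2z$, $y^2x$, $y^3$.

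The last step is routine bookkeeping: factor each of these six coefficients, set $A=0$, and check that what remains agrees, up to an overall sign, with the left-hand side of (1), (2), (3), (4), (5), (6) in turn. For example the coefficient of $yzx$ is $E-BE-E^2=E(1-B-E)$; with $A=0$ the coefficient of $yx^2$ reads $B-a-dB-B^2+aE^2=B(1-d-B)-a(1-E^2)$; and the coefficient of $y^3$ is $-(c+C^2+C^2E-cE^2+eC)=(1+E)\bigl(-c(1-E)-C^2\bigr)-eC$. There is no genuine difficulty in the argument; the only things to be careful about are the sign conventions and the factorizations that put each coefficient into the form displayed in the statement, and the check that the $x^2z$ and $x^3$ coefficients are consequences of $A=0$ rather than extra equations.
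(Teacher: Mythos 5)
Your proof is correct and is exactly what the paper intends: the paper simply declares the lemma ``immediate from the form of the expression $G_2$,'' and your argument (linear independence of the nine monomials in the free algebra, $A=0$ from the $zx^2$ coefficient, automatic vanishing of the $x^2z$ and $x^3$ coefficients, and factorization of the remaining six) spells out that same reading-off of coefficients. The factorizations you exhibit all check out against the displayed formula for $G_2$.
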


\begin{thm}
\label{ttps with G_2 = 0}
Suppose that $T = T(a, b, c, d, e, 1; A, B, C, 0, E, 0)$ is a quadratic twisted tensor product in Jordan normal form. If $G_2=0$, then $A = 0$, $(a, d)$ is not a zero of $f_n(t,u)$ for all $n \geq 1$, and the parameters satisfy one of the following cases:
\begin{itemize}
\item[(i)] $a = B(1-d-B)$, $b = 0$,  $c = 0$, $e=C = E = 0$;
\item[(ii)] $e=B = C = 0$, $E = 1$;
\item[(iii)] $d = -1$, $B = 2$, $e=C = 0$, $E = -1$;
\item[(iv)] $a = B(1-d-B)$, $b = 1-d-2B$, $c=-1$, $C = 1$, $e=E = 0$;
\item[(v)] $e=0$, $d= E= -1$, $B = 2$, $C = 1$.
\item[(vi)] $e=1$, $d=E=0$, $a=B(1-B)$, $b=C-B-2BC$, $c=-C(1+C)$;
\item[(vii)] $B=C=0$, $e=d=E=1$.
\end{itemize}
Conversely, if $(a, d)$ is not a zero of $f_n(t,u)$ for all $n \geq 1$, and if the parameters satisfy $A=0$ and the conditions in one of $(i)$--$(vii)$, then $T$ is a twisted tensor product of $R$ and $S$.
Moreover, all of the algebras described in $(i)$-$(vii)$ are Koszul.
\end{thm}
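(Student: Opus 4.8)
The plan is to extract both implications from the Gröbner-basis computation already in hand --- Lemma \ref{cases} and Lemma \ref{system for z^2y to resolve} --- together with Theorem \ref{ttps with A=0, diagonal Jordan form}. The only genuinely new work is an elementary but careful case analysis solving the polynomial system attached to $G_2=0$.

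\emph{Forward direction.} Assume $T$ is a (quadratic) twisted tensor product in Jordan normal form with $f=1$ and $G_2=0$. Then Lemma \ref{cases} places us in its case (1), so $A=0$ and $(a,d)$ is not a zero of $f_n(t,u)$ for any $n\ge 1$; in particular $f_1(a,d)=1-a\ne 0$, so $a\ne 1$. By Lemma \ref{system for z^2y to resolve}, the vanishing $G_2=0$ is equivalent to $A=0$ together with the six scalar equations (1)--(6) recorded there, so the problem reduces to enumerating all solutions of that system compatible with the Jordan-normal-form restrictions of Lemma \ref{JNFs}: namely $e\in\{0,1\}$; $d=E$ when $e=1$; and $C\in\{0,1\}$ when $e=0$ (since then $e=A=0$); and with $a\ne 1$.

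\emph{The case analysis.} I would split on the value of $e$, and within each case on whether $E\in\{0,1,-1\}$. Equations (1) and (2) factor conveniently: when $E\ne 0$ they force $B=1-E$ and then, when $e=0$, $(1+d)(E-1)=0$, whereas when $e=1$ (so $d=E$) they force $E^2=1$. This quickly reduces $(B,E)$ to a short list, after which equations (3)--(6) solve for $a,b,c$ in terms of the remaining free parameters; equation (4), which reads $EC(1+E)=0$ when $e=0$, separates the sub-cases according to whether $C=0$, $C=1$, or $E=-1$. The hypothesis $a\ne 1$ is exactly what eliminates the one spurious branch ($e=0$, $d=-1$, $E\notin\{1,-1\}$, $C=0$), in which equation (3) collapses to $(1-E^2)(1-a)=0$ and hence forces $a=1$. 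Matching the surviving branches against the normal-form restrictions yields precisely the seven families (i)--(vii) with the asserted free parameters. This bookkeeping, while routine, is the crux of the proof, and the chief pitfall is failing to invoke each normal-form restriction --- especially $C\in\{0,1\}$ in the $e=0$ case, which is what recovers the exact cases (i) and (iv).

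\emph{Converse and Koszulity.} Conversely, suppose $(a,d)$ is not a zero of any $f_n(t,u)$ and the parameters satisfy $A=0$ together with the conditions of one of (i)--(vii). A direct substitution into equations (1)--(6) of Lemma \ref{system for z^2y to resolve} shows they all hold, so $G_2=0$. Theorem \ref{ttps with A=0, diagonal Jordan form} then applies verbatim and gives that $T$ is a quadratic twisted tensor product which is moreover Koszul; since every algebra listed in (i)--(vii) arises in this way, this also establishes the final ``moreover'' assertion. The only labor here is the case-by-case substitution check, which is immediate.
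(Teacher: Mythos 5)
Your proposal is correct and follows essentially the same route as the paper: Lemma \ref{cases} and Lemma \ref{system for z^2y to resolve} give $A=0$ and the non-vanishing of the $f_n(a,d)$, the seven families come from solving equations (1)--(6) case by case under the Jordan-normal-form constraints ($e\in\{0,1\}$, $C\in\{0,1\}$ when $e=0$, $d=E$ when $e=1$), with $a\neq 1$ eliminating the spurious branch exactly as you describe, and the converse plus Koszulity follow by substitution together with Theorem \ref{ttps with A=0, diagonal Jordan form}. The paper organizes the subcases by the value of $C$ before $E$ rather than the reverse, but this is only a bookkeeping difference.
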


\begin{proof}
By Lemmas  \ref{cases} and \ref{system for z^2y to resolve}, since  $G_2=0$ we have $A=0$ and $(a, d)$ is not a zero of $f_n(t,u)$ for all $n \geq 1$. Since $T$ is assumed to be in Jordan normal form, $e\in\{0,1\}$. If $e=0$, then $C\in\{0,1\}$ and if $e=1$ then $d=E$ by Lemma \ref{JNFs}. Furthermore, equation (1) in Lemma \ref{system for z^2y to resolve} implies $E=0$ or $B+E=1$. Thus to prove the first part of the theorem, it suffices to consider the cases $(e=C=E=0)$, $(e=C=0, E\neq 0)$, $(e=E=0, C=1)$, $(e=0, C=1, E\neq 0)$, $(e=1, d=E=0)$, and $(e=1, d=E\neq 0)$.

Case (i) results from setting $e=C=E=0$ in the equations of Lemma \ref{system for z^2y to resolve}. If $e=C=0$ and $E\neq 0$, then  $B+E=1$ and equation (2) in Lemma \ref{system for z^2y to resolve} becomes $BE(1+d)=0$. Since $E\neq 0$, we have $B=0$ or $d=-1$. If $B=0$ we have $E=1$, which is case (ii). If $d=-1$, then equation (3) in Lemma \ref{system for z^2y to resolve} can be rewritten as $(1-a)(1-E^2)=0$. Since $f_1(a,d)=1-a\neq 0$, we have $E=\pm 1$. If $E=1$, the parameters belong to case (ii), otherwise we have case (iii).

Setting $e=E=0$ and $C=1$ in the equations of Lemma \ref{system for z^2y to resolve} results in case (iv). If $e=0$, $C=1$, and $E\neq 0$, then $B+E=1$ and, from equation (4) of Lemma  \ref{system for z^2y to resolve}, $1+E=0$. These two equations imply $B=2$ so, using equation (5) of Lemma \ref{system for z^2y to resolve}, we have  $d=-1$, which is case (v).

Now we turn to the case $e=1$ and $d=E$. If $d=E=0$, then the equations of Lemma \ref{system for z^2y to resolve} determine $a, b,$ and $c$ in terms of $B$ and $C$ as in case (vi). If $d=E\neq 0$, then $B+E=1$ and equation (2) of Lemma \ref{system for z^2y to resolve} becomes $E(E^2-1)=0$, so $E^2-1=0$. Using $B+E=1$ and $E^2-1=0$ in concert with equation (5) shows $B=0$, hence $E=1$ and, by equations (4) and (6), $C=0$. This is case (vii), and the first part of the proof is complete.

The converse, and the fact that $T$ is Koszul in each case, follow from Theorem \ref{ttps with A=0, diagonal Jordan form} and  Lemma \ref{system for z^2y to resolve}, after verifying that all six equations of the Lemma are satisfied in each case. 
\end{proof}

\subsection{The elliptic case.}

In this subsection we treat the case where $f = 1$ and $G_2 \neq 0$.
\begin{lemma}
\label{caseA=1}
If $T$ is a graded twisted tensor product in Jordan normal form such that $f  = 1$ and $G_2\neq 0$, then $A=1$, $d = E = -1$, $e=0$, and $b = (1-a)(2-B)$.
\end{lemma}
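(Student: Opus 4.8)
The plan is to bootstrap from Lemma \ref{cases}, which already gives us $e=0$, $d=-1$, $A=1$, and $G_1=(1-a)G_2$ in the case $G_2\neq 0$. The only genuinely new assertions to establish are $E=-1$ and $b=(1-a)(2-B)$; everything else is recorded machinery. First I would note that since $T$ is in Jordan normal form with $e=0$, Lemma \ref{JNFs} does \emph{not} immediately force $d=E$, so $E$ remains a free parameter a priori. The key is to extract the remaining constraints by comparing the explicit coefficient expressions in $G_1$ and $G_2$ under the identity $G_1=(1-a)G_2$, now specialized to $e=0$, $d=-1$, $A=1$.

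The main computation: substitute $e=0$, $d=-1$, $A=1$ into the displayed formulas for $G_1$ and $G_2$, and then impose $G_1 = (1-a)G_2$ coefficient-by-coefficient on the monomial basis $\{zxz, zx^2, x^2z, x^3, yzx, yxz, yx^2, y^2z, y^2x, y^3\}$ of the degree-3 span. The $zxz$ and $zx^2$ coefficients are automatically consistent by the derivation of Lemma \ref{cases}. The informative relations come from the $yzx$ and $y^2z$ coefficients, which in $G_1$ (after the substitution) read $(b+e)E\mid_{e=0} = bE$ and $(cE^2-c+eC-e^2E+e^2)\mid_{e=0}=c(E^2-1)$ respectively, while in $G_2$ they read $E(1-B-E)$ and $-(CE^2+CE)= -CE(E+1)$. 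Setting $bE=(1-a)E(1-B-E)$ and $c(E^2-1)=-(1-a)CE(E+1)$, together with the $x^2z$-coefficient relation $-(a-d^2-eA)\mid = -(a-1-1) = 2-a = (1-a)(-AE)\mid = (1-a)(-E)$, i.e. $2-a=(1-a)(-E)$. This last equation is the crux: solving $2-a=-(1-a)E$ for $E$ gives $E = \frac{a-2}{1-a}$, but I expect that cross-referencing with Proposition \ref{not a zero of fn}-type constraints (recall $a\neq 1$) together with the $yx^2$- and $y^2x$-coefficient relations will pin $E=-1$; indeed $E=-1$ makes $2-a = -(1-a)(-1)=1-a$ false in general, so one must instead recheck — the correct reading is that the $x^2z$ coefficient of $G_1$ is $-(a-d^2-eA)= -(a-1-1)=2-a$ and the $x^2z$ coefficient of $G_2$ is $-AE = -E$, forcing $2-a = (1-a)(-E)$, hence $E=\dfrac{a-2}{a-1}$; combined with the $x^3$-coefficient relation $a+ad+bA = a-a+b = b$ on the $G_1$ side versus $A-dA-AB = 1+1-B = 2-B$ on the $G_2$ side, giving $b=(1-a)(2-B)$ directly. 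Then substituting $b=(1-a)(2-B)$ back into the $yzx$ relation $bE=(1-a)E(1-B-E)$ yields $(1-a)(2-B)E=(1-a)E(1-B-E)$, so (as $a\neq 1$) either $E=0$ or $2-B=1-B-E$, i.e. $E=-1$. Ruling out $E=0$: if $E=0$ then $G_2$ collapses — examining its $yx^2$, $y^2x$, $y^3$ coefficients with $E=0$, $A=1$, $d=-1$ shows $G_2$ would have to be a specific nonzero element whose leading structure contradicts the $x^2z$ relation $2-a=(1-a)(-E)=0$, forcing $a=2$, but then one checks $f_1(2,-1)=1-2=-1\neq0$ is fine, so instead the contradiction must come from elsewhere — I would use that $E=0$, $A=1$ combined with Proposition \ref{not a zero of fn} (whose hypothesis $A=0$ fails here) does not apply, so the cleanest route is: with $E=0$ the relation $2-a = 0$ forces $a=2$, and then the $y^2z$ relation gives $-c=(1-a)\cdot 0 = 0$, and continuing through $y^2x$, $y^3$ one derives a contradiction with $G_2\neq0$; alternatively, and more robustly, $E=0$ with $d=-1$, $e=0$ puts $\sigma$ non-invertible in a way incompatible with $\dim T_3=10$. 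I will present the argument via the coefficient comparisons, deriving $b=(1-a)(2-B)$ from the $x^3$ coefficients and $E=-1$ from the $yzx$ coefficients after eliminating the spurious branch $E=0$.

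\textbf{Main obstacle.} The hard part is the bookkeeping: the expressions for $G_1$ and $G_2$ are long, and one must substitute $e=0$, $d=-1$, $A=1$ carefully and then track which of the ten coefficient equations are independent. The subtle point is excluding $E=0$ — this requires either a direct contradiction chain through the lower coefficients of $G_2$ (showing they cannot simultaneously vanish while $G_2\neq 0$ and $G_1=(1-a)G_2$) or an appeal to the structure of $\sigma$. I expect the cleanest exclusion uses the $x^2z$-coefficient equation $2-a=(1-a)(-E)$ in tandem with $a\neq 1$: if $E=0$ then $a=2$, and substituting $a=2$ into the $y$-free part of $G_1=(1-a)G_2=-G_2$ one finds the $yx^2$ coefficient of $G_1$ becomes $(b+bd+bB)\mid_{d=-1}=bB$ while that of $G_2$ is $B-a-dB-B^2-AC-ACE+aE^2-eA\mid = B-2+B-B^2-C = 2B-2-B^2-C$, and with $b=(1-a)(2-B)=-(2-B)=B-2$ we get $bB=B^2-2B$, forcing $B^2-2B = -(2B-2-B^2-C)=-B^2+2B+2+... $ — carrying this through produces $C=$ a fixed quantity that then contradicts the $y^3$ equation, giving $G_2=0$, a contradiction. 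I would streamline this in the write-up to the minimal chain of equalities needed.
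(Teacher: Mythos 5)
Your reduction to Lemma \ref{cases} and your derivation of $b=(1-a)(2-B)$ from the $x^3$ coefficients ($a+ad+bA=b$ versus $A-dA-AB=2-B$) are exactly what the paper does. But your route to $E=-1$ has a genuine gap, compounded by an arithmetic slip. The slip first: with $e=0$, $d=-1$, $A=1$ you have $eA=0$, so the $x^2z$ coefficient of $G_1$ is $-(a-d^2-eA)=-(a-1)=1-a$, not $2-a$; the paper's displayed simplification of $G_1$ confirms this. The correct comparison is therefore $1-a=(1-a)(-E)$, i.e.\ $(1-a)(1+E)=0$, which gives $E=-1$ in one line \emph{when} $a\neq 1$ (and makes your entire $yzx$/$E=0$ detour unnecessary in that case). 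The real problem is the case $a=1$: Lemma \ref{cases} only gives $G_1=(1-a)G_2$, and $\lambda=1-a$ can be zero, in which case $G_1=0$ and the identity $G_1=(1-a)G_2$ is vacuous — no degree-3 coefficient comparison relates $E$ to anything. This case is not empty (the final classification in Theorem \ref{characterization of elliptic type ttps} allows $a=1$, $b=0$), and with, say, $c=0$ as well, all coefficients of $G_1$ vanish identically with $E$ unconstrained, so $\dim T_3=10$ holds and the obstruction to $E\neq -1$ is invisible in degree 3. This is precisely why the paper does not argue as you do: it adjoins the new Gr\"obner-basis element $w$ for $zx^2$ (legitimate once $G_1=(1-a)G_2$), observes that the resulting degree-4 overlap $zx^2y$ must resolve because $\k[x,y]\to T$ is injective, and reduces $(zx^2-f_{zx^2})y-zx(xy-yx)$ to an element $(1+E)x^4+\cdots$ of $\k[x,y]$ that must vanish, forcing $E=-1$ uniformly in $a$.

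Two smaller issues: your exclusion of the branch $E=0$ is never actually carried out — you offer three candidate contradiction chains without verifying any of them, and the first is built on the erroneous equation $2-a=0$ — and with the corrected arithmetic $E=0$ forces $a=1$ via $(1-a)(1+E)=0$, which lands you back in exactly the case your method cannot handle. To repair the proof you must either restrict to $a\neq 1$ and supply a separate argument for $a=1$, or adopt the paper's degree-4 overlap computation, which settles both at once.
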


\begin{proof}
By Lemma \ref{cases}, we have $e=0$, $d=-1$, $A=1$, and $G_1=(1-a)G_2$.  Now $G_1$ and $G_2$ simplify to:
\begin{align*}
G_1 &= (a-1)zx^2 - (a-1) x^2z+ bx^3 +bEyzx -byxz\\
&+(bB+c+cE)yx^2+(cE^2-c)y^2z  +(bC+cB+cBE)y^2x\\
&+(cC+cCE)y^3 \\
G_2 &= -zx^2 - Ex^2z +(2-B)x^3+(E-BE-E^2)yzx\\
&+(E-BE-E^2)yxz+(2B-a-B^2-C-CE+aE^2)yx^2\\
&-(CE^2+CE)y^2z+(2C-b-2BC-BCE+bE^2)y^2x\\
 &-(c+C^2+C^2E-cE^2)y^3.\\
\end{align*} 
The fact that $b=(1-a)(2-B)$ follows by considering the coefficients of $x^3$.

Denote the coefficients of $x^2z, x^3, yzx,$ and so on, in the above expression for $G_2$ by $\a_{x^2z}, \a_{x^3}. \a_{yzx},$ etc.\ , respectively.
Since $G_1=(1-a)G_2$, adjoining $$w=zx^2-\a_{x^2z}x^2z-\a_{x^3}x^3-\a_{yzx}yzx-\a_{yxz}yxz-\a_{yx^2}yx^2-\a_{y^2z}y^2z- \a_{y^2x}y^2x-\a_{y^3}y^3$$ to the set $\mathscr B$ defined above makes both overlaps $z^3$ and $z^2y$ resolvable. So $\mathscr B'=\mathscr B\cup \{w\}$ is a Gr\"obner basis to degree 3 for $T$. Applying reductions corresponding to elements of $\mathscr B'$ to the difference $(zx^2-f_{zx^2})y-zx(xy-yx)$, we obtain
\begin{align*}
(1+E)x^4&-[2(\a_{x^3}+\a_{yzx})-B(1+E)]yx^3\\
&-[2(\a_{yx^2}+B\a_{yzx})-C(1+E)^2]y^2x^2\\
&-[2(\a_{y^2x}+C\a_{yzx})-BCE(1+E)]y^3x\\
&-[2(\a_{y^3}+C\a_{y^2z})-C^2E(1+E)]y^4.
\end{align*}
As $T$ is a twisted tensor product of $\k[x,y]$ and $\k[z]$, the obvious map $\k[x,y]\to T$ is injective. Thus the expression above, whose image in $T$ vanishes, must also vanish in the free algebra $\k\la x,y,z\ra$. Hence we have $E=-1$. 
\end{proof}

We note that the conditions $A=1$, $d=E=-1$, $e=0$ and $b=(1-a)(2-B)$ imply $G_1=(1-a)G_2$, so there are no additional restrictions on the parameters to consider. We also note the calculation used to deduce $E=-1$ also shows that the overlap $zx^2y$ created by adjoining $w$ to $\mathscr B$ is resolvable when these conditions hold. 

Now we will prove that the conditions of Lemma \ref{caseA=1} ensure that $T$ is a graded twisted tensor product.

\begin{prop}
\label{G-basis for ttp diagonal Jordan form, A=1}
Let $T = T(a, (1-a)(2-B), c, -1, 0, 1; 1, B, C, 0, -1, 0)$, so $b = (1-a)(2-B)$. Let $\b = 2-B$. The ideal of relations that defines $T$ has a finite Gr\"obner basis given by:
\begin{align*}
z^2 &- (zx-ax^2-byx-cy^2+xz) \\
zy &- (x^2+Byx+Cy^2-yz) \\
xy &- yx \\
zx^2 &- (x^2z-\b yxz+\b x^3+B\b yx^2+ C \b y^2x - \b yzx).
\end{align*} 
In particular,  $\{y^i x^j (zx)^k z^l : i, j, k \geq 0, l \in \{0,1\} \}$ is a $\k$-linear basis for $T$, and the Hilbert series of $T$ is $1/(1-t)^3$.
\end{prop}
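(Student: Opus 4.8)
The plan is to prove that the four displayed elements form a Gr\"obner basis for the defining ideal of $T$ with respect to the degree-lexicographic term order with $y<x<z$ used throughout this section, and then to read off the monomial basis and the Hilbert series from the set of monomials avoiding the corresponding leading terms.

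Write $\mathscr{B}'$ for the set of four displayed elements. After substituting $b=(1-a)(2-B)$, $d=E=-1$, $e=0$, $A=1$, $f=1$ into the defining relations of $T$ and reducing $xy$ to $yx$, the first three elements of $\mathscr{B}'$ are precisely the defining relations of $T$ written with leading term first; the fourth element $w=zx^2-(x^2z-\b yxz+\b x^3+B\b yx^2+C\b y^2x-\b yzx)$ is the element adjoined to $\mathscr{B}$ in the proof of Lemma \ref{caseA=1} (under the present constraints it coincides up to sign with the reduction $G_2$ of the ambiguity $z^2y$), so it lies in the defining ideal. Hence $\mathscr{B}'$ generates the defining ideal, its set of leading terms is $\{xy,\ zy,\ z^2,\ zx^2\}$, no one of which is a subword of another, and the displayed right-hand sides are all reduced. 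Thus $\mathscr{B}'$ is a Gr\"obner basis if and only if every overlap ambiguity of $\mathscr{B}'$ is resolvable.

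Next I would enumerate the overlaps. Since every leading term has length at most $3$, every overlap ambiguity has degree at most $4$; matching the last letter of one leading term with the first letter of another shows that the complete list is $z^3$ and $z^2y$ in degree $3$, and $z^2x^2$ and $zx^2y$ in degree $4$. By the proof of Lemma \ref{caseA=1}, adjoining $w$ to $\mathscr{B}$ makes $z^3$ and $z^2y$ resolvable --- this is exactly where the identity $G_1=(1-a)G_2$ enters --- and by the remarks following Lemma \ref{caseA=1} the overlap $zx^2y$ is resolvable, its resolution being the computation that forced $E=-1$. So the only ambiguity left to verify is $z^2x^2$, and this is the step I expect to be the main obstacle: one reduces $z^2x^2$ in the two prescribed ways --- first applying the relation with leading term $z^2$ to the prefix and then reducing the occurrences of $z^2$, $zx^2$, $zy$, $xy$ that arise; and, alternatively, first applying the relation with leading term $zx^2$ to the subword in positions two through four and then reducing --- and checks that the two normal forms agree. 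This is a finite but lengthy reduction using all four relations repeatedly, whose cancellations are forced precisely by the relation $b=(1-a)(2-B)$; the bookkeeping is entirely parallel to the overlap computations already performed for Lemma \ref{caseA=1}.

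Once $z^2x^2$ is resolved, Bergman's diamond lemma \cite{Berg} shows $\mathscr{B}'$ is a Gr\"obner basis, so the images in $T$ of the monomials of $\k\la x,y,z\ra$ containing none of $xy$, $zy$, $z^2$, $zx^2$ as a subword form a $\k$-basis of $T$. It remains to describe these reduced monomials: avoiding $xy$ and $zy$ forces every $y$ to occur before any $x$ or $z$, so a reduced monomial is $y^i$ followed by a word in $x$ and $z$, and an easy induction shows a word in $x$ and $z$ avoids $z^2$ and $zx^2$ exactly when it has the form $x^j(zx)^kz^l$ with $j,k\ge 0$ and $l\in\{0,1\}$. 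Therefore the reduced monomials are exactly $\{y^ix^j(zx)^kz^l : i,j,k\ge 0,\ l\in\{0,1\}\}$, and summing $t^{i+j+2k+l}$ over this set gives the Hilbert series $\frac{1+t}{(1-t)^2(1-t^2)}=\frac{1}{(1-t)^3}$.
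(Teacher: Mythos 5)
Your proposal is correct and follows essentially the same route as the paper: adjoin the degree-3 element $w$ to the defining relations, invoke the identity $G_1=(1-a)G_2$ to resolve the overlaps $z^3$ and $z^2y$, observe that the only new overlaps are $z^2x^2$ and $zx^2y$, and then read off the reduced monomials and the Hilbert series via the diamond lemma. The one computation you defer (resolvability of $z^2x^2$) is treated at exactly the same level of detail in the paper, which simply asserts that ``straightforward calculations'' resolve the two degree-4 overlaps, so there is no substantive gap relative to the published argument.
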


\begin{proof}
Since the conditions $A=1$, $d=E=-1$, $e=0$ and $b=(1-a)(2-B)$ imply $G_1=(1-a)G_2$, adjoining $w=zx^2-(x^2z-\b yxz+\b x^3+B\b yx^2+ C \b y^2x - \b yzx)$ to the set $\mathscr B$ makes both overlaps $z^3$ and $z^2y$ resolvable. Two new overlaps are created: $z^2x^2$ and $zx^2y$. Straightforward calculations show these overlaps are resolvable as well. Thus $\mathscr B\cup\{w\}$ determines a finite Gr\"obner basis for $T$.

The stated basis is precisely the set of monomials that do not contain any of $xy, zy, z^2,$ or $zx^2$ as subwords. The Hilbert series follows by an easy counting argument.
\end{proof}

\begin{prop}
\label{basis for ttp diagonal Jordan form, A=1}
The set $\S = \{y^i x^j z^k : i, j, k \geq 0\}$ is a $\k$-basis for $T$.
\end{prop}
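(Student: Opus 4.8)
The goal is to upgrade the ``staircase'' basis $\{y^i x^j (zx)^k z^l : i,j,k\ge 0,\ l\in\{0,1\}\}$ from Proposition \ref{G-basis for ttp diagonal Jordan form, A=1} to the cleaner monomial basis $\S = \{y^i x^j z^k : i,j,k\ge 0\}$. My plan is to argue purely dimension-theoretically: both sets are $\k$-linearly independent spanning candidates in each degree, and Proposition \ref{G-basis for ttp diagonal Jordan form, A=1} already pins the Hilbert series down to $(1-t)^{-3}$, so it is enough to show that $\S$ \emph{spans} $T$. Once $\S$ spans and $\dim T_n = \binom{n+2}{2}=|\S\cap T_n|$, linear independence of $\S$ is automatic, and the canonical map $R\tsr S\to T$, $x^iy^j\tsr z^k\mapsto y^ix^jz^k$ (note $xy=yx$ in $T$, so this is the same span), becomes a linear isomorphism, which is exactly the assertion that $T$ is a twisted tensor product.

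To see that $\S$ spans, the key reduction is the relation $zx^2 = x^2z-\b yxz+\b x^3+B\b yx^2+ C\b y^2x - \b yzx$ from the Gr\"obner basis, together with $zy = x^2+Byx+Cy^2-yz$ and $z^2 = zx-ax^2-byx-cy^2+xz$. I would show by induction on $k$ that $zx^k$ lies in $\operatorname{Span}\S$ for all $k\ge 0$: the cases $k=0,1$ are trivial, and for $k\ge 2$ one writes $zx^k = (zx^2)x^{k-2}$ and substitutes the $zx^2$ relation, which re-expresses $zx^k$ in terms of $x^2 z x^{k-2}$ (already in $\operatorname{Span}\S$), $x^3 x^{k-2}$, and lower terms of the form $y\,(\text{monomial in }x,z)\,x^{k-2}$ where the $z$-weight is unchanged but the $x$-degree to the left of the first $z$ has strictly increased; inducting on that statistic (with the innermost induction on $k$) drives everything into $\S$. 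One also needs $y$ and the commutativity $xy=yx$ to move all $y$'s to the far left, and the $zy$ relation to pull any $z$ past a $y$; since $\operatorname{Span}\S$ is visibly a left $\k[y]$-module and closed under the relevant moves, a clean way to package this is: $\operatorname{Span}\S$ is a left ideal-like subspace that contains $1$ and is stable under right multiplication by $x$ and by $z$, hence equals $T$.

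Alternatively — and this may be the slickest route — I would invoke the filtration machinery already set up for the reducible case: the associated graded algebra $\gr T$ with respect to the $\mathcal M$-grading has the monomial presentation $T^{\circ}$ (with $A$ now possibly $1$, one must recompute $R_{\rm lead}$, but the leading terms of the four Gr\"obner relations show $\gr T$ is a quotient of $\k\la y,x,z\ra/\la zx^2, zy, xy\ra$), and $\dim(\gr T)_n = \dim T_n = \binom{n+2}{2}$ forces $\gr T$ to have the monomial basis $\{y^i x^j z^k : \text{no } zx^2 \text{ as subword in } x^jz^k\}$ — wait, that is not $\S$ either. So I'll stick with the direct spanning argument above.

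The main obstacle I anticipate is bookkeeping in the induction: when substituting the $zx^2$ relation into $zx^k$, the term $-\b yzx\cdot x^{k-2} = -\b y\,z\,x^{k-1}$ reintroduces a $zx^{k-1}$ (with a $y$ prefixed), so the naive induction on $k$ is circular. The fix is to induct on the pair $(k, \text{number of } y\text{-factors})$ in reverse-lexicographic order, or better, to prove first the auxiliary claim that $y^m z x^k \in \operatorname{Span}\S$ for \emph{all} $m$ and $k$ simultaneously by induction on $k$, noting that prefixing $y$'s is harmless because $y$ commutes with $x$ and the $zy$ relation lets one also handle $z y^m = (\text{stuff})y^{m-1} - y\,z\,y^{m-1}$; the degree/Hilbert-series bound guarantees this recursion terminates. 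No genuinely hard algebra is needed once the induction is organized correctly; it is entirely a matter of choosing the right well-founded order on monomials.
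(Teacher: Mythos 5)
Your overall strategy --- prove that $\S$ spans $T$ and then deduce linear independence from the Hilbert series $(1-t)^{-3}$ supplied by Proposition \ref{G-basis for ttp diagonal Jordan form, A=1} --- is exactly the paper's, and your treatment of $zx^k$ is sound (the circularity you worry about is not actually there: the term $-\b\, y(zx^{k-1})$ produced by substituting the $zx^2$ relation is covered by the induction hypothesis at $k-1$, because ${\rm Span}\,\S$ is closed under left multiplication by monomials in $x$ and $y$). The gap is in the passage from ``$zx^k\in{\rm Span}\,\S$ for all $k$'' to ``$\S$ spans $T$.'' The words that are genuinely hard to rewrite are those with several $z$'s interleaved among the $x$'s; in the staircase basis of Proposition \ref{G-basis for ttp diagonal Jordan form, A=1} these are $y^ix^j(zx)^kz^l$ with $k\ge 2$, the smallest being $zxzx$, and your induction, which only moves a single $z$ rightward past a block of $x$'s, never touches them. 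Your proposed packaging (``${\rm Span}\,\S$ contains $1$ and is stable under right multiplication by the generators, hence equals $T$'') would indeed finish the argument, but stability under right multiplication by $x$ requires $z^kx\in{\rm Span}\,\S$ for all $k$ (and stability under $y$ requires $z^ky\in{\rm Span}\,\S$), which is a different family of statements from the $zx^k$ you prove; these would need their own inductions, seeded by identities such as $z^2x=f_1(x,y)+f_2(x,y)z^2$.

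The paper closes exactly this gap by exploiting the staircase basis directly: since ${\rm Span}\,\S$ is closed under left multiplication by $y^ix^j$ and right multiplication by $z$, it suffices to show $(zx)^i\in{\rm Span}\,\S$ for all $i$. This is done by induction on $i$, writing $(zx)^{i+1}=z\cdot x(zx)^i=\sum a_{jkl}\,zy^jx^kz^l$ and then using two auxiliary facts: $zy^j=p_j(x,y)+(-1)^jy^jz$ for suitable polynomials $p_j$, and $zx^k\in{\rm Span}\,\S$ (essentially your lemma, which the paper derives from $z^2x^j\in{\rm Span}\,\S$ and the $z^2$ relation rather than from the $zx^2$ Gr\"obner relation). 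So what you prove is one of the needed ingredients, but the main induction --- the one that disposes of repeated $zx$ factors --- is absent from your plan, and that is where the real content of the proposition lies.
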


\begin{proof} Let $U$ denote the $\k$-linear span of $\S$. To show that $U = T$, by Proposition \ref{G-basis for ttp diagonal Jordan form, A=1} and the fact that $x$ and $y$ commute, it suffices to prove that for all $i \geq 0$, that $(zx)^i$ is in $U$. 

To start we recall that $zy = x^2+B yx + Cy^2-yz$. 
An easy induction shows that for all $j\ge 1$, there exist polynomials $p_j(x,y)$  such that 
$$zy^j = p_j(x,y) + (-1)^jy^jz.$$ 
In particular, $zy^j$ is in $U$ for all $j\ge 0$.


Next we claim that $z^2 x^j$ and $zx^j$ are in $U$. One checks that there exist polynomials $f_1(x,y)$ and $f_2(x,y)$ such that $$z^2x = f_1(x,y) + f_2(x,y)z^2.$$ Now for $j > 1$, $$z^2x^j = (f_1(x,y)+f_2(x,y)z^2)x^{j-1},$$ so inductively we see that $z^2x^j \in U$. Furthermore, $zx = ax^2+byx+cy^2+z^2-xz \in U$ and hence $$zx^j = (ax^2+byx+cy^2+z^2-xz)x^{j-1},$$ so inductively $zx^j \in U$.

Now we claim that $(zx)^i$ is in $U$ for all $i\ge 1$. We have noted that $zx\in U$. Suppose, inductively, that $(zx)^i \in U$ for some $i\ge 1$. Since $x$ and $y$ commute, it follows that $x(zx)^i\in U$. Write $x(zx)^i = \sum a_{jkl}y^jx^kz^l$. Then $$(zx)^{i+1} = zx(zx)^i =\sum a_{jkl}zy^jx^kz^l=\sum a_{jkl}p_j(x,y)x^kz^l+(-1)^j\sum a_{jkl}y^jzx^kz^l.$$ and it is apparent from the observations above that $(zx)^{i+1} \in U$.

The fact that the Hilbert series of $T$ is $1/(1-t)^3$ implies that $\S$ is linearly independent.

\end{proof}

\begin{thm}
\label{characterization of elliptic type ttps}
The algebra $T = T(a, (1-a)(2-B), c, -1, 0, 1; 1, B, C, 0, -1, 0)$ is a twisted tensor product of $R$ and $S$. Moreover, the generators $x, y, z$ of $T$ are left and right regular.
\end{thm}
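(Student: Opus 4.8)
The first assertion follows directly from Proposition~\ref{basis for ttp diagonal Jordan form, A=1}. That proposition gives that $\S=\{y^ix^jz^k:i,j,k\ge 0\}$ is a $\k$-basis of $T$. Since $x$ and $y$ commute in $T$, the monomials $\{x^jy^i:i,j\ge 0\}$ form a $\k$-basis of $R$ that maps bijectively onto $\{y^ix^j\}\subseteq T$; hence the canonical graded linear map $(i_R,i_S)\colon R\tsr S\to T$, $r\tsr z^k\mapsto i_R(r)i_S(z^k)$, carries the basis $\{x^jy^i\tsr z^k\}$ of $R\tsr S$ onto $\S$, and is therefore a linear isomorphism. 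In particular $i_R$ and $i_S$ are injective, so $(T,i_R,i_S)$ is a graded twisted tensor product of $R$ and $S$.

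For the regularity statement the plan is to argue in two steps. First, the basis $\S$ realizes $T$ as a free left $R$-module, $T=\bigoplus_{k\ge 0}Rz^k$: for each fixed $k$ the map $r\mapsto rz^k$ is injective and these images form a direct sum, both facts being immediate from $\S$. Consequently, left multiplication by $x$, and by $y$, preserves each summand $Rz^k$ and acts there as multiplication in the integral domain $R=\k[x,y]$, hence is injective on $T$; thus $x$ and $y$ are left regular. Likewise, right multiplication by $z$ sends $rz^k\mapsto rz^{k+1}$, so it embeds $\bigoplus_{k\ge 0}Rz^k$ into $\bigoplus_{k\ge 1}Rz^k$ and is injective; thus $z$ is right regular.

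Second, I would use an anti-automorphism to obtain the remaining three regularities. Writing the defining relations of $T$ in the form
$$xy=yx,\qquad zx+xz-z^2=ax^2+bxy+cy^2,\qquad zy+yz=x^2+Bxy+Cy^2,$$
one checks that the letter-reversal anti-automorphism of the free algebra $\k\la x,y,z\ra$ carries each of these relations into the ideal of relations defining $T$ (for the last two relations this uses $xy=yx$). Hence $x\mapsto x$, $y\mapsto y$, $z\mapsto z$ extends to an involutive graded anti-automorphism $\psi$ of $T$. Since $\psi$ fixes each generator, left regularity of a generator is equivalent to its right regularity; combining this with the first step shows that $x$, $y$, and $z$ are all both left and right regular.

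The one point that genuinely requires checking is that letter-reversal descends to $T$, i.e., that each of the three defining relations is invariant modulo the ideal under reversing the order of factors; this is where the special shape of the elliptic-type relations enters, in particular that $d=E=-1$, $e=0$, and $D=0$, so that $\t(zx)$ and $\t(zy)$ involve $z$ only through the ``symmetric'' combinations $zx+xz$ and $zy+yz$ together with the $z^2$ term. Once that verification is in place, all of the regularity claims drop out of the free-module decomposition furnished by Proposition~\ref{basis for ttp diagonal Jordan form, A=1}, so I do not expect any further obstacle.
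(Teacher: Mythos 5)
Your proof is correct and follows essentially the same route as the paper: the first assertion is deduced from Proposition~\ref{basis for ttp diagonal Jordan form, A=1} together with the Hilbert series, and regularity is obtained by reading off injectivity of one-sided multiplication by each generator from the basis $\{y^ix^jz^k\}$ and then transferring to the other side via the anti-automorphism $T\cong T^{\mathrm{op}}$ fixing the generators. Your explicit check that letter reversal preserves the defining relations (using $d=E=-1$, $e=D=0$) is exactly the verification the paper leaves implicit in asserting $T\cong T^{\mathrm{op}}$.
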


\begin{proof}
The first statement follows from Proposition \ref{basis for ttp diagonal Jordan form, A=1} and the Hilbert series of $T$.  For the second statement, note that $T \cong T^{op}$, as graded algebras. It follows that an element of $T$ is left regular if and only if it is right regular. It is clear from the basis $\{y^i x^j z^k : i, j, k \geq 0\}$, and the fact that $x$ and $y$ commute, that $x$ and $y$ are right regular and that $z$ is left regular.

\end{proof}

We conclude this section by remarking that, up to isomorphism, all of the quadratic twisted tensor products of $R$ and $S$ are described in Theorem \ref{Ore extension classification} (Ore type), Theorem \ref{ttps with G_2 = 0} (reducible type), and Theorem \ref{characterization of elliptic type ttps} (elliptic type).

\section{The Koszul property and Yoneda algebras}

We continue to use the notation $R = \k[x,y]$, $S = \k[z]$ and $$T = T(a, b, c, d, e, f; A, B, C, D, E, F)$$ (cf. the beginning of Section \ref{sec:QuadTTPs}). In this section we determine which of the quadratic twisted tensor products of $R$ and $S$ are Koszul. In \cite{C-G} it was asked if there exist Koszul algebras $R$ and $S$, and a twisting map $\t: S \tsr R \to R \tsr S$ such that the algebra $R \tsr_{\t} S$ is quadratic, but not Koszul. Theorem \ref{Koszul property for T(g, d)} below affords examples of this phenomenon. We also compute the structure of the Yoneda algebra for these non-Koszul examples. (Recall that for a Koszul algebra, the Yoneda algebra and the quadratic dual algebra are isomorphic; see \cite[Definition 1, p. 19]{PP} for example.)

\subsection{The Koszul property} 

 In the case of an Ore-type or reducible twisted tensor product, the Koszul property follows immediately from the results in the preceding section.

\begin{thm}
\label{Koszul property for Ore extensions, A = 0}
Let $T$ be a quadratic twisted tensor product of $R$ and $S$. If $T$ is of Ore type or reducible type, then $T$ is Koszul.
\end{thm}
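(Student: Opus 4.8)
The reducible-type case requires nothing new: Theorem~\ref{ttps with A=0, diagonal Jordan form} (and therefore Theorem~\ref{ttps with G_2 = 0}) already shows that every reducible-type twisted tensor product of $R$ and $S$ is Koszul. So the work is entirely in the Ore-type case, and the plan is to re-run the associated-graded argument that proved Theorem~\ref{ttps with A=0, diagonal Jordan form}, but with the three generators of $T$ grouped differently. By definition an Ore-type $T$ is isomorphic to a twisted tensor product with $f=F=0$, so I may assume $f=F=0$ outright; by Proposition~\ref{oneSidedOre} this means $T=R[z;\s,\d]$ for a graded endomorphism $\s$ of $R$ and a graded left $\s$-derivation $\d$.

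Next I would filter $T$ by the monoid $\mathcal M=\bigcup_{n}\{1,2\}^n$ of Section~\ref{sec:QuadTTPs}, this time setting $V_1=\k x\oplus\k y$ and $V_2=\k z$ and keeping the same left-lexicographic order $\emptyset<(1)<(2)<(1,1)<\cdots$. The reason for this grouping is that, precisely when $f=F=0$, every monomial occurring in $\t(zx)$ or $\t(zy)$ has strictly smaller $\mathcal M$-weight than $zx$, resp.\ $zy$, while $xy$ and $yx$ have the same weight. Hence the algebra of leading relations does not depend on any of the parameters: it is
$$T^{\circ}=\k\la x,y,z\ra/\la xy-yx,\ zx,\ zy\ra .$$

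I would then check that $T^{\circ}$ is Koszul and that $H_{T^{\circ}}=(1-t)^{-3}$. Ordering the generators $y<x<z$, the relations $xy-yx$, $zx$, $zy$ have high terms $xy$, $zx$, $zy$; the only overlap is $zxy$, and it resolves, so these relations form a quadratic Gr\"obner basis and $T^{\circ}$ is Koszul with normal-form basis $\{y^ix^jz^k:i,j,k\ge 0\}$, whence $H_{T^{\circ}}=(1-t)^{-3}$. Since $T$ is a quadratic twisted tensor product of $R$ and $S$, its Hilbert series is also $(1-t)^{-3}$, so $\dim T_3=\dim T^{\circ}_3$ and the canonical surjection $T^{\circ}\to\gr\,T$ is an isomorphism in degree $3$. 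Exactly as in the proof of Theorem~\ref{ttps with A=0, diagonal Jordan form}, \cite[Theorem 4.2.4]{Loday-Vallette} then promotes this to an isomorphism $T^{\circ}\cong\gr\,T$ in all degrees and shows that $T$ is Koszul.

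Once the reducible case is quoted, this is a formal argument, so I do not expect a real obstacle; the one point that must be handled carefully is the computation of the leading-relation algebra — verifying that for \emph{every} admissible Ore-type parameter choice it collapses to the single algebra $T^{\circ}=\k\la x,y,z\ra/\la xy-yx,zx,zy\ra$ (this is exactly where $f=F=0$ is used, since, e.g., a $z^2$ term in $\t(zx)$ would outweigh $zx$), together with the elementary Gr\"obner-basis check that $T^{\circ}$ is Koszul. As an alternative to the whole Ore-type computation, one could instead cite the known fact that a graded Ore extension of a Koszul algebra is again Koszul and apply it to $R[z;\s,\d]$ over the Koszul algebra $R=\k[x,y]$; the filtration argument above is a self-contained substitute for that.
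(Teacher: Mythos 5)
Your proposal is correct. The reducible-type half coincides with the paper's proof (both simply invoke Theorem~\ref{ttps with A=0, diagonal Jordan form}), but for the Ore-type half you take a genuinely different, more self-contained route. The paper disposes of the Ore-type case in one line: by Proposition~\ref{oneSidedOre}, $T$ is a graded Ore extension $R[z;\s,\d]$ of the Koszul algebra $R=\k[x,y]$, and graded Ore extensions of Koszul algebras are Koszul (citing \cite{PP}) --- this is exactly the ``alternative'' you mention in your closing sentence. Your main argument instead re-runs the $\mathcal M$-filtration machinery of Section~\ref{sec:QuadTTPs} with the regrouping $V_1=\k x\oplus\k y$, $V_2=\k z$, and your key observation is sound: with $f=F=0$ every monomial of $\t(zx)$ and $\t(zy)$ has $\mathcal M$-weight $(1,1)$ or $(1,2)$, strictly below $(2,1)$, so the leading-relation algebra collapses to the parameter-free $T^{\circ}=\k\la x,y,z\ra/\la xy-yx,\,zx,\,zy\ra$, whose single overlap $zxy$ resolves, giving a quadratic Gr\"obner basis, Koszulity, and $H_{T^{\circ}}=(1-t)^{-3}$; the quadratic hypothesis on $T$ then gives $\dim T_3=10=\dim T^{\circ}_3$ and \cite[Theorem 4.2.4]{Loday-Vallette} finishes as in Theorem~\ref{ttps with A=0, diagonal Jordan form}. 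What your approach buys is independence from the external fact about Ore extensions and uniformity with the reducible case (the same PBW-deformation template handles both); what it costs is the extra leading-term analysis and Gr\"obner check, which the paper's citation makes unnecessary. One cosmetic remark: the detour through Proposition~\ref{oneSidedOre} to realize $T$ as $R[z;\s,\d]$ is not needed for your filtration argument --- the condition $f=F=0$ alone determines the leading relations.
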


\begin{proof}
If $T$ is of Ore type, then $T$ is a graded Ore extension of $R$ by Proposition \ref{oneSidedOre}. Since $R$ is Koszul, it follows that $T$ is Koszul (see \cite[Chapter 4, Section 7, Example 2]{PP}, for example). 

If $T$ is of reducible type, then $T$ is Koszul by
Theorem \ref{ttps with A=0, diagonal Jordan form}. 
\end{proof}

It remains to consider the quadratic twisted tensor products where $A = 1$. Recall that these are described in Theorem \ref{characterization of elliptic type ttps}. It is convenient, for this section and the remainder of the paper, to change the presentation of the algebras of this type. This change of presentation, albeit motivated by easing computations, is also natural in a certain sense: the cubic equation defining the point scheme (generically an elliptic curve) is in Weierstrass form (see, for example, \cite[Chapter III.1]{Sil}). In order to make this change of presentation it is necessary to assume that $\text{char}\, {\k} \ne 2$. We note that our main results, Theorem \ref{Koszul property for T(g, d)} and Theorem \ref{AS-regular algebras}(3) below, can be proved by the exact same methods without changing presentation, hence they can be seen to hold over any field. 

\begin{lemma}
\label{simple presentation, type A=1}
Suppose that ${\rm char}\, {\k} \ne 2$. 
Define $$\b = 2-B, \ \ \g = C+2(a-1), \ \ g = \g-\b^2/4, \ \ h = c-(a-1)(C+a-1).$$ The algebra $T(a, (1-a)(2-B), c, -1, 0, 1; 1, B, C, 0, -1, 0)$ can be presented as $$\frac{\k \la x, y, w \ra}{\la wy+yw-x^2-gy^2, w^2+h y^2, xy-yx \ra}.$$
\end{lemma}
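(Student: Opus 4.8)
The plan is to exhibit an explicit graded algebra isomorphism between the algebra $T=T(a,(1-a)(2-B),c,-1,0,1;1,B,C,0,-1,0)$ of Theorem \ref{characterization of elliptic type ttps} and the proposed quotient $T'=\k\la x,y,w\ra/\la wy+yw-x^2-gy^2,\ w^2+hy^2,\ xy-yx\ra$. The obvious candidate is a linear change of the third generator, replacing $z$ by $w$; since the relation $zy-(x^2+Byx+Cy^2-yz)$ in $T$ already looks like a ``Weierstrass-normalized'' relation up to completing the square in $z$, I would try $w = z - \tfrac{\beta}{2}x$ (equivalently $z = w+\tfrac{\beta}{2}x$), where $\beta = 2-B$ as in the statement. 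The map sending $x\mapsto x$, $y\mapsto y$, $z\mapsto w+\tfrac{\beta}{2}x$ is clearly a graded $\k$-algebra isomorphism $\k\la x,y,z\ra\to\k\la x,y,w\ra$ (its inverse sends $w\mapsto z-\tfrac{\beta}{2}x$), so it suffices to check that it carries the ideal of relations of $T$ onto the ideal of relations of $T'$. Because $\mathrm{char}\,\k\neq 2$, the division by $2$ is legitimate; this is where that hypothesis is used.

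The concrete steps are: (i) substitute $z = w+\tfrac{\beta}{2}x$ into each of the three defining relations of $T$, namely $z^2-(zx-ax^2-byx-cy^2+xz)$, $zy-(x^2+Byx+Cy^2-yz)$, and $xy-yx$ (recall $d=-1$, $e=0$, $E=-1$, $f=1$, $A=1$, $b=(1-a)(2-B)=(1-a)\beta$); (ii) expand and collect, using $xy=yx$ freely, to rewrite each image as a $\k$-linear combination (with $\k\la x,y,w\ra$-coefficients handled by the commutation) of the three proposed relators of $T'$; and (iii) conclude that the two ideals coincide, hence $T\cong T'$ as graded algebras. The relation $xy-yx$ is obviously preserved. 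For the $zy$-relation, substituting and using $wy+yw$ as the new ``symmetric'' combination should produce exactly a scalar multiple of $wy+yw-x^2-gy^2$ once one verifies that the coefficient of $y^2$ works out to $g=\gamma-\beta^2/4$ with $\gamma=C+2(a-1)$; the appearance of $2(a-1)$ should come from the cross terms $\tfrac{\beta}{2}x\cdot y$ interacting with the $ax^2$-type terms after re-substitution, so one may need to use the first ($z^2$) relation to reduce $x^2$-free quadratic terms. For the $z^2$-relation, after substitution one expects to get (a combination of the other two relators and) $w^2+hy^2$ with $h=c-(a-1)(C+a-1)$; the shape of $h$ strongly suggests that completing the square in the $z^2$-relation and then eliminating the linear-in-$w$ pieces using the transformed $zy$-relation is exactly what produces the constant $h$.

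The main obstacle I anticipate is purely bookkeeping: the $z^2$-relation, after the substitution $z=w+\tfrac{\beta}{2}x$, will contain terms like $wx$, $xw$, $w x$, $x^2$, $yx$, $y^2$ that are not individually relators of $T'$, so one must show these combine — after subtracting appropriate $\k\la x,y,w\ra$-multiples of $wy+yw-x^2-gy^2$ and of $xy-yx$ — into a multiple of $w^2+hy^2$. Concretely, $zx-xz$ becomes $wx-xw + \tfrac{\beta}{2}(x^2-x^2)=wx-xw$, and one needs a way to express $wx-xw$ using the relators; this likely forces one to first derive, inside $T$ (equivalently $T'$), the consequence relation expressing $zx^2$ in lower terms (the element $w$ adjoined to the Gröbner basis in Proposition \ref{G-basis for ttp diagonal Jordan form, A=1}), or simply to verify the identity of ideals by checking that each generator of one ideal lies in the other and conversely. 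Since all the relevant relations are explicitly known, this reduces to a finite, if somewhat tedious, linear-algebra verification over $\k[x,y]$-coefficients; no conceptual difficulty remains once the substitution is made. I would present it by stating the isomorphism $z\mapsto w+\tfrac{\beta}{2}x$ and then displaying the images of the three relators, leaving the routine expansion to the reader or to a short computation.
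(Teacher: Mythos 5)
There is a genuine gap: the proposed change of variables is not the right one, and the computation you outline would not close. With only $z\mapsto w+\tfrac{\b}{2}x$ and $x,y$ fixed, the image of the relation $zy+yz-x^2-Bxy-Cy^2$ is $wy+yw+\tfrac{\b}{2}(xy+yx)-x^2-Bxy-Cy^2$, which modulo $xy-yx$ carries an $xy$-term with coefficient $\b-B=2-2B$; the target relator $wy+yw-x^2-gy^2$ has no $xy$-term, and neither do the other two target relators, so this image does not lie in the target ideal (both ideals are quadratically generated, so their degree-$2$ parts are just the spans of the three relators, and membership must already hold in degree $2$). Killing the $Bxy$ cross term requires shifting $x$ by a multiple of $y$, not shifting $z$ by a multiple of $x$. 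Similarly, the image of the $z^2$-relation $zx+xz-z^2-ax^2-bxy-cy^2$ under your substitution retains $(1-\tfrac{\b}{2})(wx+xw)=\tfrac{B}{2}(wx+xw)$, which again cannot be absorbed by the target relators; for $w^2$ to swallow the $zx+xz$ terms the $x$-coefficient in $w$ must be $-1$, not $+\tfrac{\b}{2}$. Your fallback of invoking the degree-$3$ Gr\"obner element $zx^2-\cdots$ cannot repair this: a degree-$2$ element lies in a quadratically generated ideal only if it is in the span of the degree-$2$ generators, so higher-degree consequences are irrelevant here.

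The paper's proof uses the invertible substitution $x\mapsto x-(\b/2)y$, $y\mapsto y$, $w\mapsto -x+(a-1)y+z$; note that it moves \emph{both} $x$ (by a multiple of $y$, to eliminate $Bxy$ and produce the $\b^2/4$ correction in $g$) and $z$ (by $-x+(a-1)y$, so that $w^2$ reduces via the $z^2$- and $zy$-relations to $-hy^2$). Your instinct about completing the square and the role of ${\rm char}\,\k\neq 2$ is right, but the square being completed is in the $x,y$ variables and in the $x$-component of $w$, not in a $(\b/2)x$-shift of $z$. I suggest redoing the computation with the paper's substitution: one checks directly that the image of $wy+yw-x^2-gy^2$ is the $zy$-relation modulo $xy-yx$, and that the image of $w^2+hy^2$ reduces to $0$ using all three relations of $T$ together with $b=(1-a)(2-B)$.
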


\begin{proof}
This is a straightforward computation using the invertible change of variables: $x \mapsto x-(\b/2)y$, $y \mapsto y$, $w \mapsto -x+(a-1)y+z$.
\end{proof}

Using this lemma, we change notation and write $$T(g, h) = T(a, (1-a)(2-B), c, -1, 0, 1; 1, B, C, 0, -1, 0).$$ When we use this notation, we are implicitly assuming that $\text{char} \, {\k} \ne 2$.
Below we will establish when the Koszul property holds for $T(g,h)$ by computing a minimal graded free resolution of the trivial module $_T\k$. That calculation makes use of the Gr\"obner basis described in the next lemma.

\begin{lemma}
\label{Groebner basis for T(g, d)}
Order the generators of $T(g, h)$ as $y < x < w$ and use left lexicographical ordering on the monomials in $\k \la y, x, w \ra$. Then the defining ideal of $T(g, h)$ has a finite Gr\"obner basis consisting of
\begin{align*}
&xy-yx \\
&wy+yw-x^2-gy^2 \\
&w^2+h y^2 \\
&wx^2 -x^2w.
\end{align*} 
Consequently, $\{y^i x^j (wx)^k w^l : i, j, k \geq 0, l \in \{0, 1\}\}$ is a $\k$-basis for $T(g, h)$. The elements $x^2$ and $y^2$ are central in $T(g, h)$.
\end{lemma}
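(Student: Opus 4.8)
The plan is to apply the Diamond Lemma (Bergman) to the indicated set of relations, augmented by the overlap element $wx^2-x^2w$, and verify that all overlap ambiguities resolve. First I would record the three defining relations in the chosen term order $y<x<w$ with left-lex ordering: the leading monomials are $xy$, $wy$, and $w^2$ respectively, with
\begin{align*}
xy &\to yx, \\
wy &\to -yw+x^2+gy^2, \\
w^2 &\to -hy^2.
\end{align*}
The high-term set is $\{xy,\,wy,\,w^2\}$; note no high term is a subword of another, as required. The degree-$2$ overlaps are none (the words $xy$, $wy$, $w^2$ pairwise overlap only in degree $3$ or higher), so I move to degree $3$. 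The overlap words to check are $wxy$ (from $wx\cdot xy$? no — actually $w\cdot xy$ vs.\ nothing; the genuine overlaps are $wwy$ from $w^2\cdot wy$... let me be careful), specifically: $w^2y$ (overlap of $w^2$ and $wy$), $w^3$ (self-overlap of $w^2$ — actually $w^2\cdot w$ vs.\ $w\cdot w^2$), and $wxy$ (overlap of... $wx$ is not a high term). On reflection, the only degree-$3$ ambiguities are $w^2y$ and $w^3$. Reducing $w^2y$ two ways (first reduce $w^2$, or first reduce $wy$) and likewise $w^3$, one finds $w^2y$ fails to resolve and forces adjoining a new relation whose leading term, after using $xy\to yx$, is $wx^2$, giving $wx^2\to x^2w + (\text{lower terms in } y,x)$; the self-overlap $w^3$ resolves once $w^2\to -hy^2$ and centrality of $y^2$ (to be checked) are in hand. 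I would carry out this forced reduction explicitly — this is where the particular shape $wx^2-x^2w$ emerges, and it is the one genuinely computational step.

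Next I would show the augmented set $\{xy-yx,\ wy+yw-x^2-gy^2,\ w^2+hy^2,\ wx^2-x^2w\}$ has \emph{all} its ambiguities resolvable, so it is a (finite) Gröbner basis. With leading terms $xy,\ wy,\ w^2,\ wx^2$, the new overlaps introduced by $wx^2$ are: $w^2x^2$ (overlap of $w^2$ and $wx^2$), $wx^2y$ (overlap of $wx^2$ and $xy$ — the suffix $x\cdot xy$? the overlap is $wx\cdot xy$ sharing the middle $x$, giving $wx^2y$), and any overlap of $wy$ with $wx^2$ (none — $wy$ ends in $y$, $wx^2$ starts with $wx$, no overlap). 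I would check $w^2x^2$ and $wx^2y$ both resolve: for $wx^2y$, push the $y$ left past the two $x$'s and reduce, comparing with reducing $wx^2$ first; for $w^2x^2$, reduce $w^2\to -hy^2$ first versus reducing $wx^2$ first, then use $xy\to yx$ repeatedly. These resolutions are exactly the content of the remark following Proposition \ref{G-basis for ttp diagonal Jordan form, A=1} (``Two new overlaps are created: $z^2x^2$ and $zx^2y$. Straightforward calculations show these overlaps are resolvable''), transported across the change of variables of Lemma \ref{simple presentation, type A=1}; indeed, rather than redo the computation I would simply invoke Proposition \ref{G-basis for ttp diagonal Jordan form, A=1}: the change of variables $x\mapsto x-(\b/2)y$, $w\mapsto -x+(a-1)y+z$ is invertible and carries the Gröbner basis there to the one claimed here (leading terms are preserved because the substitution is lower-triangular with respect to $y<x<z$ up to the obvious rescaling), so the normal-form monomials are in bijection and the Hilbert series $1/(1-t)^3$ is inherited.

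With the Gröbner basis in hand, the $\k$-basis statement is immediate: the reduced monomials are exactly those containing none of $xy$, $wy$, $w^2$, $wx^2$ as a subword. A monomial in $y,x,w$ avoiding $xy$ and $wy$ must have all $y$'s at the front, then an alternating-ish word in $x$ and $w$; avoiding $w^2$ forbids two consecutive $w$'s, and avoiding $wx^2$ forbids $w$ followed by two or more $x$'s, so after the initial block $y^i x^j$ the word is a product of blocks $(wx)$ possibly terminating in a single $w$ — i.e.\ $y^i x^j (wx)^k w^l$ with $l\in\{0,1\}$. Finally, for centrality of $x^2$ and $y^2$: $y^2$ commutes with $x$ since $xy=yx$; $y^2$ commutes with $w$ because $wy^2 \to (-yw+x^2+gy^2)y = -ywy + x^2 y + g y^3 \to -y(-yw+x^2+gy^2)+yx^2+gy^3 = y^2 w$, using $xy=yx$; and $x^2$ commutes with $y$ (clear) and with $w$ precisely by the relation $wx^2=x^2w$. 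I do not expect any obstacle here beyond the bookkeeping of the two degree-$4$ overlap resolutions, which — as noted — can be bypassed by appealing to Proposition \ref{G-basis for ttp diagonal Jordan form, A=1}. The main (and only real) obstacle is the explicit degree-$3$ reduction that produces the fourth relation $wx^2-x^2w$ with its lower-order tail; everything else is either the Diamond Lemma bookkeeping or a transport of structure along the change of variables.
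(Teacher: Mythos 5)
Your proposal is correct and follows essentially the same route as the paper, whose proof is simply the statement that this is ``a straightforward computation using Bergman's diamond lemma'': you correctly identify the ambiguities ($w^3$ and $w^2y$ in degree $3$, then $w^2x^2$ and $wx^2y$ after adjoining the forced element), and the nonresolving overlap $w^2y$ does indeed produce exactly $wx^2-x^2w$ with no lower-order tail, after which everything resolves and the normal monomials are as claimed. The optional shortcut via Proposition \ref{G-basis for ttp diagonal Jordan form, A=1} is also legitimate, since the change of variables of Lemma \ref{simple presentation, type A=1} sends each generator to itself plus strictly smaller terms in the given order and hence preserves leading monomials.
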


\begin{proof}
The proof of the first statement is a straightforward computation using Bergman's diamond lemma. The second and third statements follow immediately from the Gr\"obner basis.
\end{proof}

For later reference, we fix the following sequences of graded free modules, which we will prove are resolutions of $T/T_+={_T\k}$.

\begin{defn}
\label{resolutions}
Let $T=T(g,h)$. 
\begin{enumerate}
\item If $h\neq 0$, define $(Q_{\bullet},d^Q_{\bullet})$ to be the sequence 
$$0\to T(-3) \xrightarrow{d^Q_3} T(-2)^{3} \xrightarrow{d^Q_2} T(-1)^{3} \xrightarrow{d^Q_1} T,$$ where $$d^Q_3 = \begin{bmatrix} h y & w & -h x \end{bmatrix}, \ \ d^Q_2 = \begin{bmatrix} -x & w-gy & y \\ 0 & h y & w \\ -y & x & 0 \end{bmatrix}, \ \ d^Q_1 = \begin{bmatrix} x \\ y \\ w \end{bmatrix}.$$
\item If $h=0$, define for each $i \geq 0$ a graded free left $T$-module by $$P_i = \begin{cases} T & i = 0 \\ T(-1)^3 & i = 1 \\ T(-2)^3 & i = 2 \\ T(-i) \oplus T(-i-1) & i \geq 3. \end{cases}$$ Also, define a map $d^P_i: P_i \to P_{i-1}$ via 

$$d^P_1 = \begin{bmatrix} x \\ y \\ w \end{bmatrix}, \ \ d^P_2 = \begin{bmatrix} -x & w-g y & y \\ 0 & 0 & w \\ -y & x & 0 \end{bmatrix}, \ \ d^P_3 = \begin{bmatrix} 0 & w & 0 \\ wy & -y^2 & -wx \end{bmatrix}, \ \ d^P_4 = \begin{bmatrix} w & 0 \\ y^2 & w \end{bmatrix},$$ and for all $i \geq 5$, $$d^P_i = \begin{bmatrix} w & 0 \\ y^2 & -w \end{bmatrix}.$$
\end{enumerate}

\end{defn}

Using Lemma \ref{Groebner basis for T(g, d)}, it is easy to check that $(Q_{\bullet},d^Q_{\bullet})$ and $(P_{\bullet},d^P_{\bullet})$ are complexes of graded free modules with $\coker\ d^Q_1$ and $\coker\ d^P_1$ isomorphic to $T/T_+={_T \k}$.

The following standard fact is very useful for proving exactness of complexes of locally-finite graded modules.

\begin{lemma}
\label{dimSum}
Let $C_{\bullet}:0 \to V_n \to V_{n-1} \to \cdots \to V_1 \to V_0 \to 0$ be a finite complex where the $V_i$ are finite-dimensional vector spaces. Then
\[\sum_{i = 0}^n (-1)^i \dim(V_i) = \sum_{i = 0}^n (-1)^i \dim(H_i(C_{\bullet})). \]
\end{lemma}

Now we are ready to use Lemma \ref{Groebner basis for T(g, d)} to prove the main theorem of this section. 

\begin{thm}
\label{Koszul property for T(g, d)} Let $T=T(g,h)$.
The complexes $(Q_{\bullet},d^Q_{\bullet})$ and $(P_{\bullet},d^P_{\bullet})$ are graded free $T$-module resolutions of $_{T}\k$ when $h\neq 0$ and $h=0$, respectively. In particular, the algebra $T(g, h)$ is Koszul if and only if $h = c-(a-1)(C+a-1)$ is nonzero.
\end{thm}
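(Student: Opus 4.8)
The plan is to verify directly that the two candidate complexes from Definition \ref{resolutions} are exact resolutions of ${}_T\k$ in the two cases $h \ne 0$ and $h = 0$, and then read off the Koszul property from the degrees of the generators of the free modules. The Gr\"obner basis of Lemma \ref{Groebner basis for T(g, d)} gives us the monomial $\k$-basis $\{y^i x^j (wx)^k w^l : i,j,k \ge 0,\ l \in \{0,1\}\}$, and hence the Hilbert series $H_T(t) = (1-t)^{-3}$; this is the essential computational input. The first step is the purely mechanical check (already noted in the text) that $(Q_\bullet, d^Q_\bullet)$ and $(P_\bullet, d^P_\bullet)$ are complexes with $\coker d^Q_1 \cong \coker d^P_1 \cong {}_T\k$ — i.e.\ that consecutive differentials compose to zero, which reduces to identities in $T$ verifiable using the Gr\"obner basis (notably $wx^2 = x^2 w$ and the centrality of $x^2, y^2$).

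For exactness when $h \ne 0$: $(Q_\bullet, d^Q_\bullet)$ is a length-$3$ complex of free modules, so in each internal degree $n$ it is a finite complex of finite-dimensional vector spaces. Apply Lemma \ref{dimSum}: the alternating sum of the dimensions of the graded pieces equals the alternating sum of the homology dimensions. The generator degrees give the Euler characteristic series
\[
1 \cdot 1 - 3t + 3t^2 - t^3 = (1-t)^3 = H_T(t)^{-1},
\]
so the alternating sum of $\dim (Q_i)_n$ in each degree $n$ equals $\dim ({}_T\k)_n$, which is exactly the alternating sum of the homology dimensions if the only homology is $H_0 = {}_T\k$. To upgrade this numerical statement to genuine exactness, I would argue that $d^Q_1$ is surjective (clear) with kernel generated by the columns of $d^Q_2$, and that $d^Q_3$ is injective: injectivity of $d^Q_3 = \begin{bmatrix} hy & w & -hx\end{bmatrix}^{\!t}$ (as a map $T(-3) \to T(-2)^3$) follows because $T$ is a domain — indeed, from the basis in Lemma \ref{Groebner basis for T(g, d)} one checks $T$ has no zero divisors, or one invokes regularity of the generators as in Theorem \ref{characterization of elliptic type ttps}. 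Once $d^Q_3$ is injective and $d^Q_1$ is surjective, the dimension count forces exactness at the middle term as well.

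For the case $h = 0$: now the complex $(P_\bullet, d^P_\bullet)$ is infinite and eventually $2$-periodic, so Lemma \ref{dimSum} must be applied to each truncation, or more cleanly one shows the periodic tail $\cdots \to T(-i)\oplus T(-i-1) \xrightarrow{\left[\begin{smallmatrix} w & 0 \\ y^2 & -w\end{smallmatrix}\right]} T(-i+1)\oplus T(-i)\to \cdots$ is exact by exhibiting it as (a twist of) the standard periodic resolution associated to the factor $\k[w,y]/(w^2)$-type relation — concretely, using $w^2 = 0$ (since $h = 0$) and centrality of $y^2$, one verifies the tail computes $\mathrm{Tor}$ over the subalgebra generated by $w$ with $w^2 = 0$. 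Splicing this with the finite top part $P_3 \to P_2 \to P_1 \to P_0$, whose exactness again follows from a dimension count against $H_T(t)^{-1} = (1-t)^3$ in low degrees together with injectivity/surjectivity at the ends, gives a resolution of ${}_T\k$. Finally, the Koszul criterion: $T$ is Koszul iff ${}_T\k$ admits a graded projective resolution with $P_i$ generated in degree $i$. When $h \ne 0$, $(Q_\bullet, d^Q_\bullet)$ is exactly such a resolution (generators in degrees $0,1,2,3$), so $T$ is Koszul. When $h = 0$, the module $P_3 = T(-3)\oplus T(-4)$ has a generator in degree $4 > 3$; since the resolution $(P_\bullet, d^P_\bullet)$ is minimal (all differentials have entries in $T_+$, visible from the matrices), it is \emph{the} minimal resolution, and the presence of a degree-$4$ generator in homological degree $3$ shows $T$ is not Koszul.

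The main obstacle I anticipate is the $h = 0$ periodic case: establishing exactness of the infinite periodic tail rigorously — the dimension-count lemma only handles finite complexes, so one needs either an explicit contracting homotopy on the tail, or a clean identification of the tail with a known resolution (e.g.\ base-changing the $2$-periodic resolution of $\k$ over $\k[w]/(w^2)$ along a suitable flat or free extension to $T$), and then checking compatibility with the finite top part at the splice $P_3 \to P_2$. Verifying minimality (to conclude non-Koszulity rather than merely "admits a non-linear resolution") is straightforward once the resolution is in hand, since every differential matrix has all entries in $T_+$.
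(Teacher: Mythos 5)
Your overall strategy matches the paper's: verify the complexes, get injectivity of $d^Q_3$ from regularity of the generators (the paper uses exactly the argument you give as your fallback, via Theorem \ref{characterization of elliptic type ttps}; it does \emph{not} establish that $T$ is a domain, and I would not rely on ``one checks $T$ has no zero divisors from the basis''), use quadraticity for exactness at $T(-1)^3$, close the gap at the middle term by an Euler-characteristic count against $H_T=(1-t)^{-3}$, and read off Koszulity versus non-Koszulity from the generator degrees together with minimality of the resolutions.

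The one place you genuinely diverge is the $h=0$ tail, and it is also the one place your argument is not yet complete. Your proposed identification of the periodic tail with the standard $2$-periodic resolution over the subalgebra generated by $w$ would require knowing that $T$ is free (or at least flat) over $\k[w]/\la w^2\ra$, which is not established and is not obviously true; without that, ``the tail computes $\Tor$ over the subalgebra'' does not follow. The paper instead computes the syzygies directly: from the PBW basis of Lemma \ref{Groebner basis for T(g, d)} one reads off that the left annihilator of $w$ (and of $wx$) is exactly $Tw$, and then a short element-wise computation, using centrality of $y^2$, shows $\ker d^P_i=\im d^P_{i+1}$ for $i\ge 3$. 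Your worry that Lemma \ref{dimSum} ``only handles finite complexes'' is resolved by the observation --- which is how the paper finishes at $P_2$ --- that in each fixed \emph{internal} degree $j$ the complex $(P_\bullet)_j$ is finite, since $P_i$ is generated in degrees $\ge i$; so once exactness is known everywhere except at $P_2$, the degree-by-degree alternating sum forces $H_2(P_\bullet)=0$. If you replace your base-change idea for the tail with this direct annihilator computation (or supply an explicit contracting homotopy), your proof is complete and coincides with the paper's.
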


\begin{proof}
Since each $Q_i$ is generated in degree $i$, but $P_i$ is generated in degrees $i$ and $i+1$ for $i\ge 3$, it suffices to prove that $(Q_{\bullet},d^Q_{\bullet})$ and $(P_{\bullet},d^P_{\bullet})$ are exact in homological degrees $>0$.  

Assume that $h \ne 0$. We claim that the map $d^Q_3$ is injective. Suppose that $t \in \ker(d^Q_3)$. Then $tx = ty = tw = 0$, so $tT_1=0$. Then Theorem \ref{characterization of elliptic type ttps} implies that $t = 0$, as desired. We have shown that the complex $$0 \xrightarrow{} T(-3) \xrightarrow{d^Q_3} T(-2)^{3} \xrightarrow{d^Q_2} T(-1)^{3} \xrightarrow{d^Q_1} T$$ is exact at $T(-3)$; since $T$ is quadratic,  the complex is exact at $T(-1)^3$. Because the Hilbert series of $T$ is $(1-t)^{-3}$, it follows that the complex is exact. Since $_T\k$ has a linear free resolution, we conclude that $T$ is Koszul when $h \ne 0$.

Now assume that $h = 0$. Notice that $w^2 = 0$ in $T$ so the map $\begin{bmatrix} 0 & w & 0 \end{bmatrix}$ is not injective. Since $T$ is quadratic, the complex $(P_{\bullet}, d^P_{\bullet})$ is exact at $T(-1)^3$.

Now we prove exactness at $T(-3)$. Suppose that $\begin{bmatrix} u_1 & u_2 \end{bmatrix} \in \ker(d^P_3)$. Then it follows that $u_2 wx = 0$. From the $\k$-basis described in Lemma \ref{Groebner basis for T(g, d)} we see that $u_2 = rw$ for some $r \in T$. We also have $u_1 w - u_2 y^2 = 0$, so, using the fact that $y^2$ is central, we have $(u_1-ry^2)w = 0$. The left annihilator ideal of $w$ is $Tw$, so $u_1-ry^2 = uw$ for some $u \in T$. Therefore $$\begin{bmatrix} u_1 & u_2 \end{bmatrix} = \begin{bmatrix} ry^2+uw & rw \end{bmatrix}  = u \begin{bmatrix} w & 0 \end{bmatrix} +r \begin{bmatrix} y^2 & w \end{bmatrix} \in \im(d^P_4).$$ We conclude exactness at $T(-3)$.

A similar, and easier, argument is used to show exactness in homological degree $i$ for all $i \geq 4$. We leave this argument to the reader.

To show exactness at $T(-2)^3$, note that for any fixed homogeneous degree $j$, the complex $(P_{\bullet},d^P_{\bullet})$ restricts to a finite complex of finite-dimensional vector spaces. Let $q(t)$ denote the Hilbert series of the homology group $H_2(P_{\bullet})$, and let $H_T$ denote the Hilbert series of $T$.  Then the Hilbert series of $T(-i)$ is $t^iH_T$. Appending $T \to {_T\k} \to 0$ to $(P_{\bullet},d^P_{\bullet})$ and applying Lemma \ref{dimSum} yields $$1-H_T+3tH_T -3t^2 H_T + t^3 H_T = -q(t).$$ Rearranging and using the fact that $H_T = (1-t)^{-3}$ we have 
$$1+q(t) = H_T(1-3t+3t^2-t^3) = 1,$$ so $q(t) = 0$. We conclude exactness of $P_{\bullet}$ at  $T(-2)^3$. Thus, $(P_{\bullet}, d^P_{\bullet})$ is a graded free resolution of $_T \k$.


\end{proof}


\subsection{The Yoneda algebra of $T(g, h)$}

For any graded algebra $A$, the graded Hom functor for graded left $A$-modules is 
$$\Hom_A(M,N)=\bigoplus_{n\in\Z} \Hom^n_A(M,N)=\bigoplus_{n\in\Z} \hom_A(M,N[n]),$$ 
where $\hom_A(M,N[n])$ is the space of degree-0 graded $A$-module homomorphisms $M\to N[n]$. This functor is left exact, and its $i$-th right derived functor is denoted $\Ext^i_A(M,N)$. The space $\Ext^i_A(M,N)$ inherits a grading from the graded Hom functor, and we denote the homogeneous degree-$j$ component by $\Ext^{i,j}_A(M,N)$.

One may compute the space $\Ext^i_A(M,N)$ as the homology group $H^i(P_{\bullet},N)$ where $(P_{\bullet},d_{\bullet})$ is a graded free resolution of $M$. Thus 
$$\Ext_A(M,N)=\bigoplus_i \Ext^i_A(M,N)=\bigoplus_{i,j} \Ext^{i,j}_A(M,N)$$ is bigraded.
When $M=N={_A}\k$, we abbreviate $E^{i,j}(A)=\Ext^{i,j}_A(\k,\k)$, $E^i(A)=\bigoplus_j E^{i,j}(A)$ and $E(A)=\bigoplus_i E^i(A)$. The vector space $E(A)=\Ext_A(\k,\k)$ admits the structure of a bigraded algebra called the \emph{Yoneda algebra} via the Yoneda composition product. We briefly recall the definition of this product, see \cite[p. 4]{PP} for more details. 

Let $\a \in E^i(A)$ and $\b \in E^j(A)$. The product $\a \star \b \in E^{i+j}(A)$ is defined as follows. Choose representatives $f \in \Hom_A(P_i, \k)$ and $g \in \Hom_A(P_j, \k)$ for the classes $\a$ and $\b$, respectively. Using projectivity, one lifts $g$ as in the first two rows of the following diagram to obtain a map $g_i \in \Hom_A(P_{i+j}, P_i)$. Then $\a \star \b$ is defined to be $[f \circ g_i] \in E(A)^{i+j}$.

\centerline{\xymatrix{
P_{i+j} \ar[r] \ar[d]^{g_i} & P_{i+j-1} \ar[r] \ar[d]^{g_{i-1}} & \cdots \ar[r] & P_j \ar[d]^{g_0} \ar[rd] ^{g}\\
P_i \ar[r] \ar[d]^{f} & P_{i-1} \ar[r] & \cdots \ar[r] &P_0 \ar[r]_{\e} & \k \\
\k
}}

We conclude this section by establishing a presentation for the Yoneda algebra of $T(g, h)$. This family of examples illustrates that, in general, relationships between the Yoneda algebras of $R$, $S$ and $R \tsr_{\t} S$ may not be simple or obvious.

Let $T = T(g, h)$. Let $(Q_{\bullet},d^Q_{\bullet})$ and $(P_{\bullet},d^P_{\bullet})$ be the complexes of graded free $T$-modules constructed in Definition \ref{resolutions}. Recall that by Theorem \ref{Koszul property for T(g, d)}, these are graded free resolutions of $_T \k$ when $h\neq 0$ and $h=0$, respectively. Since $\im\ d^Q_i\subset T_+Q_{i-1}$ and  $\im\ d^P_i\subset T_+P_{i-1}$, these resolutions are \emph{minimal}: all differentials in the complexes $\Hom_T(Q_{\bullet},\k)$ and $\Hom_T(P_{\bullet},\k)$ are trivial. Thus by Theorem \ref{Koszul property for T(g, d)}, we may take $E^i(T) = \Hom_T(Q_i, \k)$ and $E^i(T)=\Hom_T(P_i, \k)$ as our models of $\Ext^i_T(\k, \k)$, according to whether or not $h=0$. 

Let $\x, \n, \w \in E^{1,1}(T)$ denote dual basis vectors to $x, y, w \in T_1$, respectively. In the case $h=0$, let $\p \in E^{3,4}(T)$ denote the graded $T$-linear map: $\p: T(-3) \oplus T(-4) \to \k$ given by $\p(1, 0) = 0$, $\p(0, 1) = 1$.

\begin{thm}\label{Yoneda presentations}
Retain the notation of the previous paragraph. Then the Yoneda algebra $E(T)$ of $T(g,h)$ can be presented as follows.
\begin{itemize}
\item[(1)] If $h \ne 0$, then $E(T)$ is generated by $\x, \n, \w$ subject to the quadratic relations: $$\x \n+\n \x, \x \w, \w \x, \w \n-\n \w, \n \w+\x^2, \n^2 - h \w^2-g \x^2.$$
\item[(2)] If $h = 0$, then $E(T)$ is generated by $\x, \n, \w, \p$ subject to the quadratic relations: $$\x \n+\n \x, \x \w, \w \x, \w \n-\n \w, \n \w+\x^2, \n^2-g \x^2,$$ quartic relations: $$\x \p, \n \p, \p \x, \p \n, \w \p+\p \w,$$
and one sextic relation: $\p^2$.

Moreover, for all $i \geq 3$, $E^{i, i}(T) = \k \w^i$ and $E^{i, i+1}(T) = \k \w^{i-3} \p$.
\end{itemize}
\end{thm}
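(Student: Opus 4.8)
The plan is to compute the Yoneda product directly from the minimal resolutions of Definition \ref{resolutions}, exploiting the fact (noted just before the theorem) that $E^i(T)=\Hom_T(Q_i,\k)$ when $h\ne 0$ and $E^i(T)=\Hom_T(P_i,\k)$ when $h=0$, since all differentials in $\Hom_T(-,\k)$ vanish. So as graded vector spaces the ranks of $E(T)$ are read off immediately: for $h\ne 0$, $\dim E^{i}(T)$ is $1,3,3,1$ for $i=0,1,2,3$ and $0$ thereafter; for $h=0$, $\dim E^i(T)=1,3,3,2,2,2,\dots$ with the two generators of $P_i$ in degrees $i$ and $i+1$ for $i\ge 3$. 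The claimed presentations assert that the quadratic part $E^{\le 2}(T)$ is the quadratic dual of $T$ (which it is, since $T$ is quadratic), together with, in the $h=0$ case, the extra generator $\p\in E^{3,4}(T)$ and the relations it satisfies.

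First I would verify the quadratic relations. For a quadratic algebra $T=T(V)/(W)$ with $W\subseteq V\otimes V$, one has $E^{\le 2}(T)\cong T^!$, whose relations are $W^\perp\subseteq V^*\otimes V^*$. With $V=\k x\oplus\k y\oplus\k w$, the relation space $W$ of $T(g,h)$ is spanned by $xy-yx$, $wy+yw-x^2-gy^2$, $w^2+hy^2$; computing the annihilator of this $3$-dimensional subspace of the $9$-dimensional $V\otimes V$ gives a $6$-dimensional $W^\perp$, and a routine linear-algebra check confirms it is spanned exactly by the six listed quadrics $\x\n+\n\x,\ \x\w,\ \w\x,\ \w\n-\n\w,\ \n\w+\x^2,\ \n^2-h\w^2-g\x^2$. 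Equivalently, one extracts these from the second differential $d_2^Q$ (resp. $d_2^P$) by transposing and dualizing. This settles part (1) entirely once one also checks that $E(T)$ is generated in degree $1$ when $h\ne 0$ — which is automatic because $T$ is Koszul (Theorem \ref{Koszul property for T(g,d)}), so $E(T)=T^!$ is generated in cohomological degree $1$.

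For part (2) the real work is in the non-Koszul case $h=0$. Here I would: (a) confirm that $\x,\n,\w$ generate the subalgebra $\bigoplus_{i}E^{i,i}(T)$ and establish $E^{i,i}(T)=\k\w^i$ for $i\ge 3$, by lifting the cocycle $\w$ through the resolution $P_\bullet$ using the explicit differentials $d_i^P$ and reading off that $\w\star\w^{i}$ is the dual of the degree-$i$ generator of $P_{i+1}$ (the matrices $d_i^P=\left[\begin{smallmatrix}w&0\\ y^2&-w\end{smallmatrix}\right]$ for $i\ge 5$ make this a short induction: the ``$w$-strand'' of the resolution reproduces $\w$); (b) show the remaining generator is $\p\in E^{3,4}(T)$ and that $\w^{i-3}\p$ spans $E^{i,i+1}(T)$ for $i\ge 3$, again by lifting $\p$ through $P_\bullet$ and observing the $y^2$-entries of the differentials shift the internal degree by one; (c) determine the relations involving $\p$. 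The quartic relations $\x\p,\n\p,\p\x,\p\n$ express that multiplying the degree-$4$ class $\p$ by any degree-$1$ class in cohomological degree $4$ lands in $E^{4,5}(T)=\k\w\p$, forcing the products with $\x$ and $\n$ to vanish (the target has no $\x\p$- or $\n\p$-component), which one checks by an explicit lift; the relation $\w\p+\p\w$ records that $\w\p=\p\w$ up to sign, coming from the sign $-w$ versus $+w$ appearing in $d_3^P$ versus $d_4^P$ (this sign is precisely why $d_4^P$ has a $+w$ and $d_{\ge 5}^P$ has $-w$). Finally $\p^2=0$ because $E^{6,8}(T)=\k\w^6$ has nothing in internal degree $8$... wait — $\p^2\in E^{6,8}(T)$ and $E^{6,8}(T)=0$ since $P_6=T(-6)\oplus T(-7)$ has no summand in degree $8$; hence $\p^2=0$ for degree reasons.

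The main obstacle is bookkeeping the comparison maps (the lifts $g_\bullet$) through the eventually-periodic resolution $P_\bullet$ and keeping the signs straight — in particular verifying $\w\p=-\p\w$ rather than $+\p\w$, and confirming no unexpected terms appear in the lifts of $\x\star\p$, $\n\star\p$, etc. Once the explicit lifts of $\w$ and $\p$ are computed (a finite, if tedious, matrix calculation using $d_i^P$), all the stated relations and the dimension counts $E^{i,i}(T)=\k\w^i$, $E^{i,i+1}(T)=\k\w^{i-3}\p$ follow, and a dimension comparison between the abstract algebra presented by the listed generators and relations and the known Hilbert series of $E(T)$ (read from the $P_i$) shows the presentation is complete, i.e. there are no further relations. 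I would close by remarking that the $h\ne 0$ presentation can alternatively be obtained as a degeneration/specialization check, but the direct computation above is self-contained.
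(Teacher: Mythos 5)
Your proposal is correct and follows essentially the same route as the paper: part (1) via the quadratic dual $T^!$ of the Koszul algebra $T$, and part (2) via explicit lifts of $\w$ and $\p$ through the minimal resolution $(P_\bullet,d^P_\bullet)$, the vanishing of $\p^2$ for bidegree reasons since $P_6=T(-6)\oplus T(-7)$, and a final Hilbert-series comparison to confirm the presentation is complete. The only cosmetic difference is that the paper cites \cite[Proposition 3.1]{PP} to identify the diagonal subalgebra $\bigoplus_i E^{i,i}(T)$ with $T^!$ rather than re-deriving it from the lifts.
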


\begin{proof}
By Theorem \ref{Koszul property for T(g, d)}, $T$ is Koszul if $h \ne 0$. In this case it is well known (see \cite[Chapter 2, Definition 1 (c)]{PP}, for example) that $E(T)$ is isomorphic to the quadratic dual algebra, $T^!$. The algebra $T^!$ is generated by the space $T_1^*$, and its defining (quadratic) relations are those elements of $T_1^*\tsr T_1^*$ orthogonal to all quadratic relations of $T$ under the natural pairing. Thus statement (1) follows by checking orthogonality of the stated relations with those of $T$, and a simple dimension count.

Now we prove (2). Assume that $h = 0$, and set $T = T(g, 0)$. It follows from \cite[Proposition 3.1, p. 7]{PP} that the diagonal subalgebra, $\bigoplus_i E^{i,i}(T)\subset E(T)$ is isomorphic to the quadratic dual algebra, $T^!$. The quadratic relations of $T^!$, in this case, are obtained from the quadratic relations in (1) by setting $h = 0$. 

In order to prove that the quartic expressions given in (2) are in fact relations of $E(T)$, we use the model $E^i(T) = \Hom_T(P_i, \k)$ for $\Ext^i_T(\k, \k)$, where $(P_{\bullet},d^P_{\bullet})$ is the graded free resolution of $_T \k$ constructed in Definition \ref{resolutions}. The differentials are written in terms of matrices with respect to the natural canonical bases for the free modules appearing in the resolution. Let $\xi \in E^{4, 5}(T)$ denote the graded $T$-linear map: $\xi: T(-4) \oplus T(-5) \to \k$ given by $\xi(1,0) = 0$, $\xi(0,1) = 1$. With respect to these bases, we have $$\x = \begin{bmatrix} 1 \\ 0 \\ 0 \end{bmatrix}, \ \ \n = \begin{bmatrix} 0 \\ 1 \\ 0 \end{bmatrix}, \ \ \w = \begin{bmatrix} 0 \\ 0 \\ 1 \end{bmatrix}, \ \ \p = \begin{bmatrix} 0 \\ 1 \end{bmatrix}, \ \ \xi = \begin{bmatrix} 0 \\ 1 \end{bmatrix}.$$

Now we check that $\p \w + \w \p = 0$ in $E(T)$.   We will show that $\w \p = \xi$ and $\p \w = -\xi$. 

In order to compute $\w \p$, we must lift $\p$ as in the first two rows of the following diagram. 

\centerline{\xymatrix{
\cdots \ar[r] & P_7 \ar@{.>}[d]^{\p_4} \ar[r]^{d_7^P} & P_6 \ar@{.>}[d]^{\p_3} \ar[r]^{d_6^P} & P_5 \ar@{.>}[d]^{\p_2} \ar[r]^{d_5^P} & P_4 \ar@{.>}[d]^{\p_1} \ar[r]^{d_4^P} & P_3 \ar@{.>}[d]^{\p_0}  \ar[rd]^{\p}\\
\cdots \ar[r] & P_4 \ar[r]^{d_4^P} & P_3 \ar[r]^{d_3^P} & P_2 \ar[r]^{d_2^P} & P_1 \ar[r]^{d_1^P} \ar[d]^{\w} & P_0 \ar[r]_{\e} & \k \\
& & & & \k
}}

Let $$\p_0 = \begin{bmatrix} 0 \\ 1 \end{bmatrix}, \ \ \p_1 = \begin{bmatrix} 0 & 0 & 0 \\ 0 & 0 & 1 \end{bmatrix}, \ \ \p_2 = \begin{bmatrix} 0 & 0 & 0 \\ 0 & -1 & 0 \end{bmatrix},$$ and $$ \p_i = \begin{bmatrix} 0 & 0 \\ (-1)^{i+1} & 0 \end{bmatrix}, \text{ for all } i \geq 3.$$ Then it is straightforward to check that these maps do indeed complete the above diagram. The map $\w \circ \p_1$ is given by the matrix $\begin{bmatrix} 0 & 1 \end{bmatrix}^t$, so we conclude that $ \w \p = \xi$.

Now we compute $ \p \w$. We must lift $\w$ through the first two rows of the following diagram.

\centerline{\xymatrix{
\cdots \ar[r] & P_5 \ar[r]^{d_5^P} \ar@{.>}[d]^{\w_4} & P_4 \ar[r]^{d_4^P} \ar@{.>}[d]^{\w_3} & P_3 \ar[r]^{\ \ \ d_3^P} \ar@{.>}[d]^{\w_2} & P_2 \ar[r]^{d_2^P} \ar@{.>}[d]^{\w_1} & P_1 \ar[rd]^{\w} \ar@{.>}[d]^{\w_0} \\
\cdots \ar[r] & P_4 \ar[r]^{d_4^P} & P_3 \ar[r]^{d_3^P} \ar[d]^{\p} & P_2 \ar[r]^{ \ \ \ d_2^P} & P_1 \ar[r]^{d_1^P} & T \ar[r]^{\e} & \k \\
& & \k
}}

Let $$\w_ 0 = \begin{bmatrix} 0 \\ 0 \\ 1 \end{bmatrix}, \ \ \w_ 1 = \begin{bmatrix} 0 & 1 & 0 \\ 0 & 0 & 1 \\ 0 & 0 & 0 \end{bmatrix},  \ \ \w_ 2 = \begin{bmatrix} 0 & 1 & 0 \\ -y & 0 & x \end{bmatrix}, \ \  \w_ 3 = \begin{bmatrix} 1 & 0 \\ 0 & -1 \end{bmatrix},$$ and $$ \w_i = \begin{bmatrix} 1 & 0 \\ 0 & 1 \end{bmatrix}, \text{for all } i \geq 4.$$ 

One checks that these maps complete the last diagram. The map $\p \circ \w_3$ is given by the matrix $\begin{bmatrix} 0 & -1 \end{bmatrix}^t$, so we conclude that $\p \w = - \xi$. It follows that $\p \w + \w \p = 0$ in $E(T)$, as claimed. The other quartic relations can be proved via similar computations.

Note that because the Yoneda algebra is bigraded, $\p^2 \in E^{6, 8}(T)$. The minimal resolution $(P_{\bullet},d^P_{\bullet})$ constructed in Definition \ref{resolutions} has $P_6 = T(-6) \oplus T(-7)$, so $E^{6, 8}(T) = 0$. Hence $\p^2 = 0$ in $E(T)$. 

Now we prove the last statement of (2). Using the definitions of the maps $\p_i, \w_i$ given above, it is easy to compute that for all $i \geq 3$, $\w^i \in E^{i, i}(T)$ is given by the matrix $\begin{bmatrix} 1 & 0 \end{bmatrix}^t$;  and for all $j \geq 1$, $\w^j \p \in E^{j+3, j+4}(T)$ is given by the matrix $\begin{bmatrix} 0 & (-1)^{j-1} \end{bmatrix}^t$. Therefore, since we know by Definition \ref{resolutions} (2) that $E^{i, i}(T)$ and $E^{j+3, j+4}(T)$ are both 1-dimensional, we have $E^{i, i}(T) = \k \w^i$ for all $i \geq 3$ and $E^{j+3, j+4}(T) = \k \w^j \p$ for all $j \geq 0$.

Using the minimal resolution $(P_{\bullet},d^P_{\bullet})$, we know the Hilbert series of $E(T)$. Then a straightforward Gr\"obner-basis argument shows that the algebra with generators $\x, \n, \w, \p$ and defining relations as in (2) has the same Hilbert series as $E(T)$. Hence, the relations given in (2) are in fact a complete set of defining relations for $E(T)$.


\end{proof}

When $T(g, h)$ is Koszul, it is natural to ask if $E(T)$ is isomorphic as a $\k$-algebra to a twisted tensor product of the Yoneda algebras $E(R)$ and $E(S)$ since, as graded vector spaces, we have $E(T) =  E(R) \tsr E(S)$.

\begin{prop} If $T(g, h)$ is Koszul, then $E(T(g,h))$ is not isomorphic as a $\k$-algebra to a twisted tensor product of $E(R)$ and $E(S)$.
\end{prop}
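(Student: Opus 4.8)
The plan is to distinguish $E:=E(T(g,h))$ from every twisted tensor product $C$ of $E(R)$ and $E(S)$ by a single algebraic invariant: the largest dimension of a subspace of the degree-one part that consists entirely of square-zero elements. Since $T(g,h)$ is Koszul, Theorem \ref{Koszul property for T(g, d)} forces $h\neq 0$, so by Theorem \ref{Yoneda presentations}(1) the algebra $E$ is a finite-dimensional connected graded quadratic algebra generated in degree one. On the other hand $E(R)$ is isomorphic to the exterior algebra on a two-dimensional $\k$-vector space and $E(S)$ to the exterior algebra on a one-dimensional one; hence $C$ is again a finite-dimensional connected graded algebra, generated in degree one (it is the product $i_{E(R)}(E(R))\cdot i_{E(S)}(E(S))$, and both factors are generated by their degree-one parts, which sit inside $C_1$). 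Both $E$ and $C$ have Hilbert series $(1+t)^3$, so no numerical invariant will separate them.

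First I would reduce to graded isomorphisms. For a finite-dimensional connected graded $\k$-algebra $A$ generated in degree one, the augmentation ideal $A_+$ is nilpotent with $A/A_+\cong\k$, hence $A_+$ is the Jacobson radical, and $A_+^{\,n}=\bigoplus_{i\ge n}A_i$ for every $n$. Thus any $\k$-algebra isomorphism $E\to C$ carries $E_+^{\,n}$ onto $C_+^{\,n}$ for all $n$, so it induces an isomorphism of the associated graded algebras for the radical filtrations; as those filtrations coincide with the grading filtrations, this produces a graded algebra isomorphism $E\cong C$. Hence it suffices to show that $E$ and $C$ are not isomorphic as graded algebras.

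The core computation is then carried out in $E_1$. Writing $v=\alpha\x+\beta\n+\gamma\w$ and reducing $v^2$ using the six quadratic relations of Theorem \ref{Yoneda presentations}(1), one obtains $v^2=(\alpha^2+g\beta^2-2\beta\gamma)\,\x^2+(h\beta^2+\gamma^2)\,\w^2$, where $\x^2$ and $\w^2$ are linearly independent in $E_2$. Therefore a two-dimensional subspace $W\subseteq E_1$ consists of square-zero elements if and only if the two scalar quadratic forms $Q_1=\alpha^2+g\beta^2-2\beta\gamma$ and $Q_2=h\beta^2+\gamma^2$ both vanish identically on $W$. Since $h\neq 0$ and $\k$ is algebraically closed, $Q_2$ factors as a product of two distinct linear forms, and the kernel of each is a plane containing $\x$; as $W$ is two-dimensional, it must coincide with one of these two planes, so $\x\in W$. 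But $Q_1(\x)=1\neq 0$, a contradiction. Hence $E_1$ contains no two-dimensional subspace of square-zero elements. Since a graded algebra isomorphism $\phi: E\to C$ would carry $E_1$ onto $C_1$, and $C_1$ does contain such a subspace — namely $i_{E(R)}(E(R)_1)$, every element of which squares to zero because it does so in the exterior algebra $E(R)$ — this is impossible, completing the proof.

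The step I expect to be the main obstacle is the reduction to graded isomorphisms: the square-zero-subspace invariant is empty without the grading (for instance $E_{\ge 2}=\bigoplus_{i\ge 2}E_i$ is a four-dimensional subspace of $E$ all of whose elements square to zero), so one genuinely needs the radical-filtration argument, and the point that makes it work is that both $E$ and $C$ are generated in degree one, so that powers of the radical recover the grading.
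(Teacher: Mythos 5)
Your proof is correct and follows essentially the same strategy as the paper's: both arguments come down to showing that $E^1(T(g,h))$ contains no two-dimensional subspace consisting entirely of square-zero elements (via the explicit presentation of Theorem \ref{Yoneda presentations}(1)), whereas the degree-one part of any twisted tensor product of $E(R)$ and $E(S)$ contains the image of $E(R)_1$, which is such a subspace. Your radical-filtration reduction from ungraded to graded isomorphisms is a careful justification of a step the paper leaves implicit (the paper places the generators of the exterior subalgebra directly in $E^1(T)$), and your full computation of $v^2$ as a pair of quadratic forms replaces the paper's shortcut of first checking that no nonzero element of $\mathrm{span}\{\x,\w\}$ squares to zero; both yield the same contradiction.
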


\begin{proof}
Let $T = T(g, h)$ and assume $T$ is Koszul. Suppose, to the contrary, that $E(T)$ is isomorphic to a twisted tensor product of $E(R)$ and $E(S)$. By the standard fact that the polynomial ring $R$ is a Koszul algebra, $E(R)$ is isomorphic to the exterior algebra $\Lambda(x, y) = \k \la x, y \ra/ \la x^2, y^2, xy+yx \ra$. It follows that $E(T)$ has a subalgebra isomorphic to $\Lambda(x, y)$. Therefore there exist linearly independent elements $\a, \b \in E^1(T)$, such that $\a^2 = \b^2 = \a\b + \b \a = 0$. 

To rule out the existence of such elements, thereby obtaining a contradiction, we use the presentation for $E(T)$ given in Theorem \ref{Yoneda presentations} (1). Order the generators by $\w < \x < \n$, and order monomials in the free algebra $\k \la \x, \n, \w \ra$ by degree and left-lexicographic order. Then the associated Gr\"obner basis in degree 2 is given by: $$\n \x+\x \n, \x \w, \w \x, \n \w - \w \n, \x^2 + \w \n, \n^2-g \x^2-h \w^2.$$ Consider an element $\g = a \x + b \w \ne 0$, where $a, b \in \k$. Then, in terms of the monomial basis determined by the Gr\"obner basis, $$\g^2 = -a^2 \w \n + b^2 \w^2,$$ so $\g^2 \ne 0$. 

Write $\a = a_{\w} \w + a_{\x} \x + a_{\n} \n$ and $\b = b_{\w} \w + b_{\x} \x + b_{\n} \n$ for some $a_{\w}, a_{\x}, a_{\n}, b_{\w}, b_{\x}, b_{\n} \in \k$. It follows from the last paragraph, that $a_{\n}$ and $b_{\n}$ are both nonzero. Without loss of generality, by multiplying $\a$ and $\b$ by appropriate scalars, we may assume that $a_{\n} = b_{\n} = 1$. Since $\a$ and $\b$ are linearly independent, $\a - \b \ne 0$, and the $\n$ component of $\a - \b$ is zero. Thus, $(\a - \b)^2 \ne 0$. However, computing directly and using our assumptions, $$(\a - \b)^2 = \a^2 - \a \b - \b \a -\b^2 = 0,$$ a contradiction.

Hence, $E(T)$ is not isomorphic to a twisted tensor product of $E(R)$ and $E(S)$.
\end{proof}

\begin{rmk}
\label{regrading not necessarily Koszul}
It is also natural to ask if the algebra $E(T(g, 0))$ becomes Koszul after regrading: $\deg(\p) = 1$. However, this is not the case. For example, if $g \in \{0, 1\}$, then, after regrading, $E(T(g, 0))$ is not Koszul. 
Nevertheless, further study of $E(T(g,0))$ seems interesting.
\end{rmk}

\section{Artin-Schelter regularity}

We continue to use the notation $R = \k[x,y]$ and $S = \k[z]$. In this section we determine when a quadratic twisted tensor product of $R$ and $S$ is Artin-Schelter regular. This notion was introduced by Artin and Schelter in \cite{AS}. 

\begin{defn} 
\label{AS-regular}
\cite{AS}
A finitely-presented graded algebra, $A$, generated in degree 1 is called \emph{AS-regular} of dimension $d$ if (i) $\gldim(A) = d < \infty$; (ii) $\GKdim(A) < \infty$; and (iii) $A$ is \emph{Gorenstein}: $\Ext^n_A(\k, A) = 0$ if $n \ne d$, and $\Ext^d_A(\k, A) \cong \k$.
\end{defn}

Recall that every quadratic twisted tensor product of $R$ and $S$ is isomorphic as a twisted tensor product of $R$ and $S$ to one of the algebras described in Theorem \ref{Ore extension classification} (Ore type), Theorem \ref{ttps with G_2 = 0} (reducible type), and Theorem \ref{characterization of elliptic type ttps} (elliptic type). The property of AS-regularity is an algebra-isomorphism invariant, so the following result completely determines when a quadratic twisted tensor product of $R$ and $S$ is AS-regular.

\begin{thm}
\label{AS-regular algebras}
Let $T$ denote a quadratic twisted tensor product of $R$ and $S$.
\begin{itemize}
\item[(1)] If $T$ is an algebra of Ore type, then $T$ is AS-regular if and only if $T\cong R[z;\s,\d]$ where $\s \in \End(R)$ is invertible. 
\item[(2)] If $T$ is an algebra of reducible type, then $T$ is AS-regular if and only if $E \ne 0$ and $a+d \ne 0$.
\item[(3)] Assume that ${\rm char}\, \k \ne 2$. If $T$ is an algebra of elliptic type, then $T$ is AS-regular if and only if $h = c-(a-1)(C+a-1) \ne 0$.
\end{itemize}
\end{thm}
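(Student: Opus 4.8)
\textbf{Proof plan for Theorem \ref{AS-regular algebras}.}

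The plan is to handle the three types separately, leveraging the resolutions and structural facts already established. For each type I will check the three conditions of Definition \ref{AS-regular}: finite global dimension, finite GK-dimension, and the Gorenstein property. Since every $T$ in question has Hilbert series $(1-t)^{-3}$, condition (ii) ($\GKdim T < \infty$) is automatic, so the real work is to pin down when $\gldim T = 3$ and when $T$ is Gorenstein.

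For part (1) (Ore type), I would use Proposition \ref{oneSidedOre} to write $T = R[z;\s,\d]$ with $R = \k[x,y]$ Koszul of global dimension $2$. The standard fact about Ore extensions gives $\gldim T = \gldim R + 1 = 3$ unconditionally, so the point is the Gorenstein condition. Here the plan is: if $\s$ is invertible then $T$ is a so-called \emph{graded Ore extension with invertible twist} of an AS-regular algebra, and one knows such extensions preserve AS-regularity (this is a twisted-Ore-extension argument — e.g. applying the change of variables that turns $[z;\s,\d]$ into $[z;\s]$ up to isomorphism of filtered algebras, then using that $R[z;\s]$ is AS-regular when $\s$ is a graded automorphism). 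Conversely, if $\s$ is not invertible, I would show $T$ fails Gorenstein by exhibiting a nonzero element killed by a one-sided multiplication that obstructs $\Ext^3_T(\k,T)\cong\k$; concretely, non-invertibility of $\s$ means $\s(R)$ is a proper subalgebra, which forces the top of the dualizing complex to be larger than $\k$ (equivalently, $z$ fails to be regular in an appropriate sense, or the trivial module fails to have a self-dual resolution). I would make this precise by computing the minimal resolution of $\k$ from the Ore presentation and dualizing.

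For part (2) (reducible type), the algebras are listed in Theorem \ref{ttps with G_2=0}, cases (i)--(vii), and $T$ is Koszul in every case by that theorem. Being Koszul of Hilbert series $(1-t)^{-3}$, $T$ is AS-regular if and only if the Koszul dual $T^!$ is Frobenius, equivalently if and only if the minimal free resolution of $\k$ is finite and self-dual of length $3$. The plan is to build the minimal resolution explicitly — in the reducible case $T\cong T^\circ$ up to the associated-graded argument (Lemma \ref{Tcirc}, Theorem \ref{ttps with A=0, diagonal Jordan form}), and $T^\circ\cong \k[y]\tsr_\p(\k[x]\tsr_\s\k[z])$ is an iterated one-sided twisted tensor product — and read off finiteness and the Gorenstein symmetry. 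Tracking through the parameter normalizations, global dimension $3$ and Gorenstein both reduce to the non-vanishing of the ``last'' differential entry, and a direct computation identifies this with $E\ne 0$ and $a+d\ne 0$; conversely, when $E=0$ or $a+d=0$ I would show the resolution of $\k$ is infinite (so $\gldim T=\infty$), which rules out AS-regularity outright.

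For part (3) (elliptic type), assume $\text{char}\,\k\ne 2$ and use the presentation $T(g,h)$ from Lemma \ref{simple presentation, type A=1}, where $h = c-(a-1)(C+a-1)$. By Theorem \ref{Koszul property for T(g,d)}, when $h\ne 0$ the complex $(Q_\bullet,d^Q_\bullet)$ of Definition \ref{resolutions} is a length-$3$ minimal free resolution of $\k$, so $\gldim T=3$; dualizing $(Q_\bullet,d^Q_\bullet)$ — i.e. applying $\Hom_T(-,T)$ — and checking the resulting complex is again a resolution of $\k$ (up to a shift) using that the transpose of the matrices $d^Q_i$ reassemble, after a sign/relabeling, into the same pattern, shows $T$ is Gorenstein; hence AS-regular. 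When $h=0$, Theorem \ref{Koszul property for T(g,d)} shows the minimal resolution $(P_\bullet,d^P_\bullet)$ is infinite, so $\gldim T=\infty$ and $T$ is not AS-regular. The main obstacle in this part — and the step I expect to be most delicate — is verifying the Gorenstein symmetry of $(Q_\bullet,d^Q_\bullet)$ when $h\ne 0$: one must compute $\Hom_T(Q_\bullet,T)$, recognize it as a minimal complex of free right (equivalently, via $T\cong T^{\mathrm{op}}$, left) modules, and confirm its homology is concentrated in top degree and one-dimensional there, which amounts to a careful transpose-and-resolve computation with the $3\times 3$ matrix $d^Q_2$ over the noncommutative ring $T$. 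The isomorphism $T\cong T^{\mathrm{op}}$ noted in Theorem \ref{characterization of elliptic type ttps} will be used to move between left and right modules throughout.
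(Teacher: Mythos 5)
Your plans for parts (1) and (3) are essentially workable: part (3) follows the paper's own route (non-Koszulity kills $h=0$; for $h\neq 0$ dualize $(Q_\bullet,d^Q_\bullet)$ and check exactness via the relations, regularity of a degree-one element, and an Euler-characteristic/Hilbert-series count), and part (1) reaches the right conclusion, although your converse argument is vaguer than it needs to be --- the paper simply observes that non-invertible $\s$ makes $R[z;\s,\d]$ a non-domain, while AS-regular algebras of dimension $3$ are domains by \cite{ATVI}, \cite{ATVII}; your proposed ``dualize the minimal resolution and see that the top is larger than $\k$'' is not justified as written and would require real computation to substantiate.

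The genuine gap is in part (2). You propose to show that when $E=0$ or $a+d=0$ the minimal free resolution of $_T\k$ is infinite, so that $\gldim T=\infty$. This cannot happen: by Theorem \ref{Koszul property for Ore extensions, A = 0} every reducible-type $T$ is Koszul, and since its Hilbert series is $(1-t)^{-3}$, the Koszul dual has Hilbert series $(1+t)^3$ and hence $\gldim T=3$ for \emph{all} admissible parameter values. So conditions (i) and (ii) of Definition \ref{AS-regular} hold unconditionally in the reducible case, and the only thing that can fail is the Gorenstein condition; an argument that tries to locate the failure in the global dimension is attacking the wrong condition and will not close. The paper detects the Gorenstein failure indirectly through structural necessary conditions for three-dimensional AS-regularity: if $E=0$ the relation $zy=Bxy+Cy^2$ gives the zero divisor $(z-Bx-Cy)y=0$, and AS-regular algebras of dimension $3$ are domains; if $E\neq 0$ and $a+d=0$, then $y$ is normal (Proposition \ref{not a zero of fn}) and $T/\la y\ra\cong \k\la x,z\ra/\la (z-ax)(z-x)\ra\cong\k\la x,z\ra/\la xz\ra$ is not noetherian, hence $T$ is not noetherian, whereas AS-regular algebras of dimension $3$ are noetherian. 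For sufficiency the paper again avoids a direct Gorenstein computation: when $E\neq 0$ and $a+d\neq 0$, normality of $y$ plus the fact that $T/\la y\ra$ is a (noetherian, regular) quadratic algebra on two generators lets one invoke Stephenson--Zhang to lift regularity from $T/\la y\ra$ to $T$. Your alternative --- reading off the Gorenstein symmetry from ``the non-vanishing of the last differential entry'' of an explicitly built resolution --- is asserted but not connected to the conditions $E\neq 0$ and $a+d\neq 0$ by any computation, and since the length-$3$ resolution exists in every reducible case, that connection is exactly the content you would still have to supply.
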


\begin{proof}
Since $R$ and $S$ are finitely-presented and generated in degree 1, the same is true of $T$.

Let us begin by showing that an algebra in the context of (1) or (2) has global dimension equal to $3$. If $T$ is an algebra of Ore type or reducible type, then  $T$ is Koszul by Theorem \ref{Koszul property for Ore extensions, A = 0}. Then the fact that the Hilbert series of $T$ is $(1-t)^{-3}$ implies that $\gldim(T) = 3$. 

Let $T$ denote an algebra of Ore type. By Proposition \ref{oneSidedOre},  $T$ is an Ore extension of the form $R[z; \s, \d]$. If $\s$ is not invertible, then $T$ is not a domain. It is known that every  AS-regular algebra of dimension 3 is a domain (see \cite[Theorem 8.1]{ATVI} and \cite[Theorem 3.9]{ATVII}), so if $\s$ is not invertible, then $T$ is not AS-regular. Conversely, by \cite[Proposition 2]{AST}, if $\s$ is invertible, then $T$ is AS-regular. 

Now we prove (2). Let $T$ be an algebra of reducible type. 
First we show that the conditions $E  \ne 0$ and $a+d \ne 0$ are necessary for $T$ to be AS-regular. If $E = 0$, then the equation $(z-Bx-Cy)y = 0$ holds in $T$,  showing that $T$ is not a domain. It follows that $T$ is not AS-regular when $E = 0$. 

Now assume that $E \ne 0$. Then Proposition \ref{not a zero of fn} implies that $y \in T$ is normal, and the quotient algebra $T/\la y \ra$ is isomorphic to $\k \la x, z \ra/\la z^2-zx+dxz + ax^2 \ra$. Notice that if $a+d = 0$, we have $$z^2-zx+dxz +ax^2 = (z-ax)(z-x).$$ By Proposition \ref{not a zero of fn}, we have $a \ne 1$ since $a$ is not a zero of $f_1 = 1-t$. Therefore the algebra $T/\la y \ra$ is isomorphic to $\k \la x, y \ra/ \la xy \ra$, which is not noetherian. Hence $T$ is not noetherian. As shown in \cite[Theorem 8.1]{ATVI}, all AS-regular algebras of global dimension 3 are noetherian. Consequently, if $a+d = 0$, then $T$ is not AS-regular. 

Finally, we show that $E \ne 0$ and $a+d \ne 0$ are sufficient conditions for $T$ to be AS-regular. Suppose that $E \ne 0 \ne a+d$. Then Proposition \ref{not a zero of fn} ensures that $y \in T$ is normal, and using Theorem \ref{two dim up to isom}, the quotient algebra $T/\la y \ra$ is noetherian. It follows  that $T$ is noetherian (see, for example, \cite[Lemma 8.2]{ATVI}). Then a result of Stephenson and Zhang, \cite[Corollary 0.2]{StZ}, implies that $T$ is AS-regular.

For (3), we use the notation $T(g, h)$ introduced in Section 5 for the algebra $T$. It is well known (see \cite[Theorem 2.2]{Shelton-Tingey}, for example) that quadratic AS-regular algebras are Koszul, so by Theorem \ref{Koszul property for T(g, d)}, if $h=0$, then $T$ is not AS-regular. Now suppose that $h \ne 0$. Recall from Theorem \ref{Koszul property for T(g, d)} that $$0 \xrightarrow{} T(-3) \xrightarrow{d^Q_3} T(-2)^{3} \xrightarrow{d^Q_2} T(-1)^{3} \xrightarrow{d^Q_1} T,$$ where $$d^Q_3 = \begin{bmatrix} h y & w & -h x \end{bmatrix}, \ \ d^Q_2 = \begin{bmatrix} -x & w-gy & y \\ 0 & h y & w \\ -y & x & 0 \end{bmatrix}, \ \ d^Q_1 = \begin{bmatrix} x \\ y \\ w \end{bmatrix},$$ is a minimal resolution of $_T \k$ by graded free $T$-modules. In particular, the global dimension of $T$ is $3$.

Since the Hilbert series of $T$ is $(1-t)^{-3}$, we know that the Gelfand-Kirillov dimension of $T$ is $3$. Hence to show that $T$ is AS-regular, we only need to verify the Gorenstein condition; it is clear that this condition is equivalent to the exactness of the dual complex of graded right $T$-modules

$$0 \to T \xrightarrow{d^Q_1} T(1)^3 \xrightarrow{d^Q_2} T(2)^3 \xrightarrow{d^Q_3} T(3) \to \k_T(3) \to 0.$$

Since $h \ne 0$, this complex is exact at $T(3)$. It is straightforward to check that in the tensor algebra $T \la x, y, w \ra$, the entries of $d^Q_3 d^Q_2$ give a basis for the space of defining relations of $T$. Thus the complex is exact at $T(2)^3$. By Theorem \ref{characterization of elliptic type ttps}, we know that $x$ is left regular, so the complex is exact at $T$. Finally, using Lemma \ref{dimSum} and the fact that the Hilbert series of $T$ is $(1-t)^{-3}$ (as in the proof of Theorem \ref{Koszul property for T(g, d)})  the complex is exact at $T(1)^3$.

\end{proof}

\noindent {\bf Acknowledgement.} We are very grateful to the anonymous referee whose numerous helpful suggestions greatly improved the quality of this paper.

\bibliographystyle{plain}
\bibliography{bibliog2}

\end{document}